

\documentclass[a4paper,11pt]{article}






\usepackage{mathrsfs,xcolor,bm}
\usepackage{epsfig, graphicx}
\usepackage{latexsym,amsfonts,amsbsy,amssymb,ulem}
\usepackage{amsmath,amsthm}

\makeatletter

\@addtoreset{equation}{section}

\renewcommand{\div}[0]{\textnormal{div}\,}
\newcommand{\curl}[0]{\textnormal{\textbf{curl}}\,}

\newcommand{\Hcurl}[1]{\mathbf{H}(\textnormal{\textbf{curl}},#1)}
\newcommand{\Hcurltr}[1]{\mathbf{H}_0(\textnormal{\textbf{curl}},#1)}

\allowdisplaybreaks

\textwidth 6.5in \textheight 8.6in \topmargin -0.20in
\oddsidemargin 0.0in


\newtheorem{theorem}{Theorem}[section]
\newtheorem{lemma}{Lemma}[section]
\newtheorem{corollary}{Corollary}[section]
\newtheorem{remark}{Remark}[section]

\date {}
\begin{document}




\title{A finite  element method for Maxwell's transmission eigenvalue problem in anisotropic media\thanks{This work was partially supported by Project supported by
the National Natural Science Foundation of China(Grant No.  12361084, 12001130).}}
\author{Jiayu Han \thanks{School of Mathematical Sciences, Guizhou Normal University, 550001, China.({\tt hanjiayu@gznu.edu.cn})} }
\maketitle

{\bf Abstract.}
In this paper, we introduce a  finite element method employing the Ned\'el\'ec element space for solving the Maxwell's  transmission eigenvalue problem in anisotropic media. The well-posedness of the source problems are derived using   $\mathbb T$-coercivity approach.
We discuss the discrete compactness property of the finite element space under the case of  anisotropic coefficients   and conduct  a finite element error analysis for the proposed approach.
Additionally, we present some numerical examples   to  support  the theoretical result.
 \\\\
\indent\textbf{\small Keywords.~}{\footnotesize   Maxwell's  transmission   eigenvalue problem, finite element method, error estimates,
discrete compactness property
}\\
\indent{\textbf{\small MSC Codes.~}{\footnotesize   65N25, 65N30, 35Q60, 35B45}
}
%
%
%
%
%



\section{Introduction}
The transmission eigenvalue problem is a crucial aspect of inverse scattering theory, serving to estimate the properties of scattering materials  \cite{CakoniCayorenColton,CakoniGintidesHaddar,Sun_1}.
In recent years, significant progress has been achieved  in the mathematical analysis of the transmission eigenvalue problem and its modified versions, along with their practical applications,  as evidenced by recent works  \cite{BonnetChesnelHaddar,CakoniCayorenColton,CakoniHaddar,CakoniGintidesHaddar,ChesnelCiarlet,ColtonKress,Kirsch,PaivarintaSylvester}.
There has been growing interest among researchers in scientific and engineering computing regarding numerical methods for addressing the transmission eigenvalue problem \cite{
ColtonMonkSun,ColtonPaivarintaSykvester,Kleefeld2018IPB,mora,SunZhou2016,xi,xie,Yang2016}.

The Maxwell's transmission eigenvalue problem (MTEP) involves two vectorial functionals in $\Hcurl{\Omega}$ which share a common tangential trace on the boundary.
Curl-conforming methods are direct and robust for  solving   MTEP.  Interested readers can find details on the construction of different curl-conforming schemes in the literatures   \cite{monk2,sun2}. To address the coupled boundary condition of   MTEP,   boundary element methods in presented in \cite{cossonniere,Kleefeld2022nua} have provided  significant and interesting numerical examples.
  Huang et al. \cite{huang} established a finite difference scheme and provided an efficient linear algebraic eigensolver for   MTEP.

Finite element error analysis for   MTEP presents a challenging task due to its non-elliptic and non-self-adjoint nature. The associated numerical methods have limited theoretical results. Enforcing the div-free constraint in the solution space complicates the numerical analysis process.  To avoid this issue,
An et al. \cite{an} transformed the eigenvalue problem of     special coefficients   on spherical domains in three dimensions into a series of one-dimensional eigenvalue problems.
As is shown in \cite{monk2}, 
for a special isotropic  media (i.e., $\bm A=\bm I$ in \eqref{Problem}),    MTEP  can be equivalently transformed as a fourth order curl eigenvalue problem.
   In this case one can derive the regularity of  the solution  space and the compactness property of the solution operator. Consequently, certain numerical treatments, such as the $\mathbf H(\curl^2)$-conforming method \cite{han} and the discontinuous Garlerkin method \cite{han1},  with a rigorous convergence proof for variable coefficient and general polyhedral domains, can be further developed.

However, the existing
numerical results do not cover  the case of anisotropic media, i.e., $\bm A \ne I$ in \eqref{Problem}, for which   MTEP cannot be transformed as a fourth order curl eigenvalue problem. To fill this gap, in this paper we propose a curl-conforming finite element
approximation of  MTEP in an anisotropic media. Firstly, we reformulate  MTEP as a coupled second-order curl eigenvalue problem. Secondly, we enforce the div-free constraint in the weak formulation to ensure the well-posedness of the associated source problem.
For the transmission eigenvalue problem, the  $\mathbb T$-coercivity  theory serves  as a powerful tool for analyzing the existence of eigenvalues (see, e.g., \cite{BonnetChesnelHaddar,BonnetChesnelCiarlet,ChesnelCiarlet}), especially  the existence of Maxwell's transmission eigenvalues (see \cite{chesnel}).
Here we introduce two sesquilinear forms satisfying  the $\mathbb T$-coercivity property in the error analysis.  The solution space involved exhibits proper smoothness properties to guarantee its compact embedding into $\mathbf L^2(\Omega)^2$.  Thirdly, we establish the discrete compactness of the finite element space, which  leads to the collectively compact convergence of the discrete solution operator.   With the aid of  the spectral approximation theory of  Babu\v ska and Osborn \cite{babuska,osborn}, we ultimately prove the optimal convergence order of the finite element eigenpairs. We present numerical examples in three dimensions, encompassing constant coefficient, variable coefficient, and anisotropic coefficient cases, to validate the theoretical result.



To simplify notation, we use the symbols $\mathfrak a\lesssim\mathfrak  b$ and  $\mathfrak a \gtrsim\mathfrak  b$ to denote  that $\mathfrak a\leq C\mathfrak  b$ and $\mathfrak a \geq C \mathfrak b$, respectively,
 where $C$ denotes a generic
positive constant independent of mesh sizes which may vary at  each occurrence.


\section{Maxwell's transmission eigenvalue problem}\label{Transmission_Eigenvalue_Problem}

Consider the Maxwell's transmission eigenvalue problem of finding $k\in \mathbb{C}\backslash\{0\}$,
 $(\bm \omega,\bm \upsilon)\in \Hcurl{\Omega}\times \Hcurl{\Omega}$ such that (c.f. \cite{CakoniGintidesHaddar,chesnel,monk2,sun2})
\begin{equation}\label{Problem}
\left\{
\begin{array}{rcll}
{\curl}\big(\bm A\curl\bm \omega\big)&=&k^2\bm N\bm \omega,&{\rm in}\ \Omega,\\
\curl^2\bm \upsilon &=&k^2\bm \upsilon, &{\rm in}\ \Omega,\\
(\bm \omega-\bm \upsilon)\times\bm\nu&=&0,&{\rm on}\ \partial \Omega,\\
\bm A\curl\bm \omega\times\bm\nu-\curl\bm \upsilon\times\bm\nu&=&0,&{\rm on}\ \partial \Omega,
\end{array}
\right.
\end{equation}
where $\Omega\subset \mathbb R^3$ is a  bounded
simply connected anisotropic medium and $\bm\nu$ is the unit outward normal to the boundary $\partial \Omega$.
The parameters  $\bm A$ and  $\bm N$ reflect  permittivity and  permeability of the medium \cite{chesnel}, respectively.
 Throughout this paper we assume that  the Hermite matrices $\bm A(\bm x)$ and  $\bm N(\bm x)$ satisfy
\begin{equation}\label{cond-A-n}
A_* |\bm\xi|^2<\overline{\bm\xi}\cdot\bm A\bm\xi< A^* |\bm\xi|^2,\quad  N_* |\bm\xi|^2<\overline{\bm\xi}\cdot\bm N\bm\xi< N^* |\bm\xi|^2,\ \ \ \forall \bm\xi\in \mathbb C^3~\text{a.e. in}~\Omega, 
\end{equation}
where   $A_*,A^*,N_*,N^*$ are   positive constants satisfying one of the following cases:


1) $A_*>1$ and $N_*>1$,~~2) $A_*>1$ and $N^*<1$,~~
3) $A^*<1$ and $N^*<1$,~~4) $A^*<1$ and $N_*>1$.
We introduce the following three  sesquilinear forms
\begin{align}
a((\bm \omega,\bm \upsilon),(\bm \omega',\bm \upsilon'))&:=\big(\bm A\curl \bm \omega,\curl \bm \omega'\big)
-\big(\curl \bm \upsilon,\curl \bm \upsilon'\big),\\
c((\bm \omega,\bm \upsilon),(\bm \omega',\bm \upsilon'))&:=\big(\bm N\bm \omega,\bm \omega'\big)-\big(\bm \upsilon,\bm \upsilon'\big),\\
b(( p', q'),(\bm \omega',\bm \upsilon'))&:=c((\nabla p',\nabla q'),(\bm \omega',\bm \upsilon'))=\big(\bm N\nabla p',\bm \omega'\big)-\big(\nabla q',\bm \upsilon'\big).
\end{align}
 Let $\lambda=k^2$ then the  variational form of \eqref{Problem} is to find  $\lambda\in \mathbb C\backslash\{0\}$ and nozero $(\bm \omega,\bm \upsilon) \in \bm{\mathcal{H}}$ such that
\begin{align}\label{Problem_Weak0}
a((\bm \omega,\bm \upsilon),(\bm \omega',\bm \upsilon'))=\lambda c((\bm \omega,\bm \upsilon),(\bm \omega',\bm \upsilon')), \ \ \ \forall (\bm \omega',\bm \upsilon') \in \bm{\mathcal{H}}.
\end{align}
We can see that $a(\cdot,\cdot)$ and $c(\cdot,\cdot)$ satisfy the symmetry properties
\begin{align}\label{sp}
a((\bm \omega,\bm \upsilon),(\bm \omega',\bm \upsilon'))=\overline{a((\bm \omega',\bm \upsilon'),(\bm \omega,\bm \upsilon))},\quad c((\bm \omega,\bm \upsilon),(\bm \omega',\bm \upsilon'))=\overline{c((\bm \omega',\bm \upsilon'),(\bm \omega,\bm \upsilon))}.
\end{align}
Define
\begin{align*}
&\bm{\mathcal{H}} := \{(\bm{w}',\bm{v}')\in \Hcurl{\Omega}\times\Hcurl{\Omega} : \bm{w}'-\bm{v}'\in\Hcurltr{\Omega}\},\\
&\mathcal  Q:=\{(p',q')\in H^1(\Omega)\times H^1(\Omega):\int_{\partial\Omega} p'=\int_{\partial\Omega} q'=0 \text{ and } p'-q'\in H^1_0(\Omega)\},\\
&\bm{\mathcal{H}}_{0}:=\{(\bm \omega',\bm \upsilon')\in \bm{\mathcal{H}}: b(( p', q'),(\bm \omega',\bm \upsilon'))=0,\quad\forall (p', q')\in \mathcal Q\},
\end{align*}
equipped with the norms
\begin{align*}
&\|(\bm \omega',\bm \upsilon')\|_{\bm{\mathcal{H}}}:=\|\bm \omega'\|_{\curl}+\|\bm \upsilon'\|_{\curl}^2,\quad
\|(p',q')\|_{\mathcal Q}:=\|p'\|_{1,\Omega}+\|q'\|_{1,\Omega},
\end{align*}
${\rm and}~\|(\bm \omega',\bm \upsilon')\|_{\bm{\mathcal{H}}_{0}}:=\|(\bm \omega',\bm \upsilon')\|_{\bm{\mathcal{H}}},$
respectively. Let  $\|\cdot\|$ be the norm in $\mathbf L^2(\Omega)$ and let $\|(\bm \omega',\bm \upsilon')\|:=\|\bm \omega'\|+\|\bm \upsilon'\|$ be  the  norm in $\mathbf L^2(\Omega)\times \mathbf L^2(\Omega)$. We also use the symbol $\|\cdot\|_{s,\Omega}$ $(s>0)$ to denote the norm in the standard Sobolev space $\mathbf H^s(\Omega)$ and further $\mathbf H^s(\curl,\Omega):=\{\bm v\in \mathbf H^s(\Omega): \curl \bm v\in \mathbf H^s(\Omega)\}$.

In order to analyze the well-posedness of  the  problem \eqref{Problem_Weak0}, we introduce
the so-called $\mathbb T$-coercivity for the sesquilinear forms $a$ and $c$, which was proposed by
the literatures  \cite{BonnetChesnelHaddar,BonnetChesnelCiarlet,ChesnelCiarlet}. 
In this paper, the notations $\mathbb T_a$  and $\mathbb T_c$ are isomorphic operators from $\bm{\mathcal{H}}$ to $\bm{\mathcal{H}}$, which are
defined as
\begin{eqnarray}
\mathbb T_a(\bm \omega',\bm \upsilon')=\left\{
\begin{array}{cc}
(\bm \omega', 2\bm \omega'-\bm \upsilon'),&\text{if $A_*>1$},\\
(\bm \omega'-2\bm \upsilon',-\bm \upsilon'),&\text{if $A^*<1$},
\end{array}
\right.
\mathbb T_c(\bm \omega',\bm \upsilon')=\left\{
\begin{array}{cc}
(\bm \omega', 2\bm \omega'-\bm \upsilon'),&\text{if $N_*>1$},\\
(\bm \omega'-2\bm \upsilon',-\bm \upsilon'),&\text{if $N^*<1$}.
\end{array}
\right.
\end{eqnarray}
For  isomorphic operators of more general $\mathbb T$-coercivity properties, we refer the readers to the work of \cite{chesnel}.
  Similarly as  \cite{BonnetChesnelHaddar,BonnetChesnelCiarlet,ChesnelCiarlet}
we  can prove  the following lemma.
\begin{lemma}\label{lemma2.1}The sesquilinear forms $a$ and $c$ satisfy the following
$\mathbb T$-coercivity properties
\begin{eqnarray}\label{tc1}
|a((\bm \omega',\bm \upsilon'),\mathbb T_a(\bm \omega',\bm \upsilon'))|
\geq C(\|\curl\bm \omega'\|_{}^2+ \|\curl\bm \upsilon'\|_{}^2),\quad \forall (\bm \omega',\bm \upsilon')\in\bm{\mathcal{H}}
\end{eqnarray}
and
\begin{eqnarray}\label{tc2}
|c((\bm \omega',\bm \upsilon'),\mathbb T_c(\bm \omega',\bm \upsilon'))|
\geq C(\|\bm \omega'\|_{}^2+ \|\bm \upsilon'\|_{}^2),\quad \forall (\bm \omega',\bm \upsilon')\in\mathbf L^2(\Omega)\times\mathbf L^2(\Omega).
\end{eqnarray}
\end{lemma}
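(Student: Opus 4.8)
The plan is to verify each inequality by the standard mechanism behind $\mathbb T$-coercivity: substitute the explicit definition of $\mathbb T_a$ (resp.\ $\mathbb T_c$) into the second slot, expand the sesquilinear form, and then bound its real part from below, using that $|a(\cdot,\cdot)|\ge \Re\,a(\cdot,\cdot)$ once the latter is shown to be nonnegative. Because $\bm A$ and $\bm N$ are Hermitian, the diagonal contributions $(\bm A\curl\bm\omega',\curl\bm\omega')$ and $(\bm N\bm\omega',\bm\omega')$ are real and two-sidedly controlled by the bounds in \eqref{cond-A-n}; the only genuine work is to absorb the cross terms produced by the off-diagonal part of $\mathbb T$.

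For \eqref{tc1} I would write $X:=\curl\bm\omega'$ and $Y:=\curl\bm\upsilon'$. In the case $A_*>1$, the choice $\mathbb T_a(\bm\omega',\bm\upsilon')=(\bm\omega',2\bm\omega'-\bm\upsilon')$ gives $a((\bm\omega',\bm\upsilon'),\mathbb T_a(\bm\omega',\bm\upsilon'))=(\bm A X,X)-2(Y,X)+\|Y\|^2$, whose real part is at least $A_*\|X\|^2-2\|X\|\,\|Y\|+\|Y\|^2$. This is a quadratic form in $(\|X\|,\|Y\|)$ with matrix $\left(\begin{smallmatrix}A_*&-1\\-1&1\end{smallmatrix}\right)$, positive definite precisely because its determinant $A_*-1$ is positive; a Young inequality $2\|X\|\,\|Y\|\le\delta\|X\|^2+\delta^{-1}\|Y\|^2$ with $\delta\in(1,A_*)$ then makes the bound $C(\|X\|^2+\|Y\|^2)$ explicit, with $C=\min\{A_*-\delta,\,1-\delta^{-1}\}$. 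In the case $A^*<1$, the choice $\mathbb T_a(\bm\omega',\bm\upsilon')=(\bm\omega'-2\bm\upsilon',-\bm\upsilon')$ flips the sign of the $\bm\upsilon'$-contribution and yields $(\bm A X,X)-2(\bm A X,Y)+\|Y\|^2$; here the cross term carries the weight $\bm A$, so I would apply the Cauchy--Schwarz inequality for the Hermitian positive form $(\bm A\cdot,\cdot)$ together with the weighted Young inequality $2|(\bm A X,Y)|\le\delta(\bm A X,X)+\delta^{-1}(\bm A Y,Y)$, choosing $\delta\in(A^*,1)$. Invoking $(\bm A X,X)\ge A_*\|X\|^2$ and $(\bm A Y,Y)\le A^*\|Y\|^2$ then leaves the coefficients $(1-\delta)A_*>0$ and $1-A^*/\delta>0$, which is exactly the desired estimate.

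For \eqref{tc2} the computation is identical after replacing $X,Y$ by $\bm\omega',\bm\upsilon'$, the matrix $\bm A$ by $\bm N$, and the constants $A_*,A^*$ by $N_*,N^*$; since no $\curl$ enters, the argument is valid on all of $\mathbf L^2(\Omega)\times\mathbf L^2(\Omega)$ rather than only on $\bm{\mathcal H}$, as the statement requires. The one point that must be handled carefully — and the only place where the four sign cases genuinely diverge — is the bookkeeping of the cross term: whether it appears unweighted as $(Y,X)$ (in the $A_*>1$/$N_*>1$ branches) or weighted as $(\bm A X,Y)$ (in the $A^*<1$/$N^*<1$ branches), since this dictates which Cauchy--Schwarz/Young combination applies and the admissible range of $\delta$. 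Once that range is read off from the strict inequalities $A_*>1$ or $A^*<1$ (respectively $N_*>1$ or $N^*<1$), positivity of both coefficients is immediate, and $C$ is taken as their minimum.
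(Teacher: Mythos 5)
Your proposal is correct and follows essentially the same route as the paper: expand $a(\cdot,\mathbb T_a\cdot)$ (resp.\ $c(\cdot,\mathbb T_c\cdot)$), keep the positive diagonal terms, and absorb the cross term — unweighted $(Y,X)$ in the $A_*>1$/$N_*>1$ branches, $\bm A$-weighted via the Cauchy--Schwarz inequality for $(\bm A\cdot,\cdot)$ in the $A^*<1$/$N^*<1$ branches — by a Young inequality with $\delta$ chosen in the interval dictated by the strict inequality on $A_*$ or $A^*$. The only difference from the paper is the cosmetic reparametrization $\delta\mapsto\delta^{-1}$ in the choice of the Young parameter.
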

\begin{proof}First consider  the case $A^*>1$.
From the positive definite property of $\bm A$, a simple calculation shows that
\begin{align}\label{2.12ss}
|a((\bm \omega',\bm \upsilon'),\mathbb T_a(\bm \omega',\bm \upsilon'))|&=|(\bm A\curl\bm \omega',\curl\bm \omega')-(\curl\bm \upsilon',\curl(2\bm \omega'-\bm \upsilon')) |\nonumber\\
&\geq  A_* \|\curl\bm \omega'\|_{}^2+\|\curl\bm \upsilon'\|_{}^2 - 2|(\curl\bm \omega',\curl\bm \upsilon')| \nonumber\\
&\geq (A_*-\delta^{-1})\|\curl\bm \omega'\|_{}^2+\Big(1-\delta\Big)\|\curl\bm \upsilon'\|_{}^2\nonumber\\
&\geq C(\|\curl\bm \omega'\|_{}^2+ \|\curl\bm \upsilon'\|_{}^2).
\end{align}
Note that $A_*>1$, choosing $\delta \in \Big(\frac{1}{A_*},1\Big)$ leads to
   the $\mathbb T$-coercivity (\ref{tc1}) of  $a(\cdot,\cdot)$. 


For the case $N_*>1$, a similar argument shows that
\begin{align}\nonumber
|c((\bm \omega',\bm \upsilon'),\mathbb T_c(\bm \omega',\bm \upsilon'))|&=|(\bm N\bm \omega',\bm \omega')-(\bm \upsilon',2\bm \omega'-\bm \upsilon')|\nonumber\\
&\geq \Big( N_*-\frac{1}{\delta}\Big)\|\bm \omega'\|^2+(1-\delta)\|\bm \upsilon'\|^2
\end{align}
which leads to the $\mathbb T$-coercivity (\ref{tc2}) by choosing $\delta \in \Big(\frac{1}{N_*},1\Big)$.

For the case $A^*<1$, a simple calculation shows that
\begin{align}
|a((\bm \omega',\bm \upsilon'),\mathbb T_a(\bm \omega',\bm \upsilon'))|&=|(\bm A\curl\bm \omega',\curl(\bm \omega'-2\bm \upsilon'))
+(\curl\bm \upsilon',\curl \bm \upsilon') |\nonumber\\
&\geq  (\bm A\curl\bm \omega',\curl\bm \omega')+\|\curl\bm \upsilon'\|_{}^2  - 2|(\bm A\curl\bm \omega',\curl\bm \upsilon')| \nonumber\\
&\geq \Big(1-\frac{1}{\delta}\Big)(\bm A\curl\bm \omega',\curl\bm \omega')+\Big(1-A^*\delta\Big)\|\curl\bm \upsilon'\|_{}^2.\nonumber
\end{align}
Note that $A^*<1$, choosing $\delta \in \Big(1,\frac{1}{A^*}\Big)$   leads to
   the $\mathbb T$-coercivity (\ref{tc1}) of  $a(\cdot,\cdot)$. Similarly the $\mathbb T$-coercivity   of  $c(\cdot,\cdot)$ is also true  when $N^*<1$.
\end{proof}

For  $(\bm \omega',\bm \upsilon')\in\mathbf{L}^2(\Omega)\times \mathbf{L}^2(\Omega) $, consider the problem of finding $(\phi,\psi)\in \mathcal Q$ such that
\begin{align}\label{lap}
(\bm N\nabla  \phi,\nabla p')-(\nabla\psi,\nabla q')=(\bm N\bm \omega',\nabla p')-(\bm \upsilon',\nabla q'),~\forall (p',q')\in \mathcal Q.
\end{align}
From  the  $\mathbb T$-coercivity  property \eqref{tc2} and the Poincar\'e inequality in $H^1(\Omega)$ we have
\begin{align*}
|c((\nabla  p',\nabla q'),\mathbb T_c(\nabla p',\nabla q'))|\gtrsim \|\nabla  p'\|^2+\|\nabla q'\|^2\gtrsim \| p'\|_{1,\Omega}^2+\|q'\|_{1,\Omega}^2,~\forall (p',q')\in \mathcal Q.
\end{align*}
Hence the  problem \eqref{lap} is well-posed.
This infers the following direct sum decompositions
\begin{align}
\mathbf{L}^2(\Omega)\times \mathbf{L}^2(\Omega)&=\bm{\mathcal{L}}_{0}\bigoplus\{(\nabla p',\nabla q'):(p',q')\in \mathcal Q\},\label{sum1}\\
\bm{\mathcal H}&=\bm{\mathcal{H}}_{0}\bigoplus\{(\nabla p',\nabla q'):(p',q')\in \mathcal Q\},\label{sum}
\end{align}
where
\begin{align*}
\bm{\mathcal{L}}_{0}:=\{(\bm \omega',\bm \upsilon')\in \mathbf{L}^2(\Omega)\times\mathbf{L}^2(\Omega): b(( p', q'),(\bm \omega',\bm \upsilon'))=0,\quad\forall (p', q')\in \mathcal Q\}.
\end{align*}
  Due to the decomposition \eqref{sum}, the problem \eqref{Problem_Weak0} is equivalent to the following variational form: Find  $\lambda\in \mathbb C$ and nozero $(\bm \omega,\bm \upsilon) \in \bm{\mathcal{H}}_{0}$ such that
\begin{align}\label{Problem_Weakk}
a((\bm \omega,\bm \upsilon),(\bm \omega',\bm \upsilon'))&=\lambda c((\bm \omega,\bm \upsilon),(\bm \omega',\bm \upsilon')), \ \ \ \forall (\bm \omega',\bm \upsilon') \in \bm{\mathcal{H}}_{0}.
\end{align}
Note that $\lambda=0$ is not an eigenvalue of \eqref{Problem_Weakk}. In fact, if $\lambda=0$ in \eqref{Problem_Weakk} then
\[(\bm A\curl \bm \omega,\curl \bm \omega')-(\curl \bm \upsilon,\curl \bm \upsilon')=0,\quad \forall(\bm\omega',\bm \upsilon')\in \bm{\mathcal H}_0.\]
Due to the direct sum decomposition \eqref{sum}, the above inequality is also valid for $(\bm\omega',\bm \upsilon')=\mathbb T_a(\bm\omega,\bm \upsilon)$. From the $\mathbb T$-coercivity of $a(\cdot,\cdot)$ in Lemma 2.1,
 we see that
 $\|\curl \bm \omega\|^2+\|\curl \bm \upsilon\|^2\lesssim0,$  then  $\bm \omega=\bm \upsilon=0$ due to the Poincar\'e inequality postponed to Corollary \ref{Corollary 2.1}.

Furthermore the problem \eqref{Problem_Weak0} has another equivalent form:
Find $\lambda\in \mathbb C$, nozero $(\bm \omega,\bm \upsilon) \in \bm{\mathcal{H}}$ and $( \mathfrak p, \mathfrak q)\in \mathcal Q$ such that
\begin{subequations}\label{Problem_Weak}
\begin{align}
a((\bm \omega,\bm \upsilon),(\bm \omega',\bm \upsilon'))+&b(( \mathfrak p, \mathfrak q),(\bm \omega',\bm \upsilon'))=\lambda c((\bm \omega,\bm \upsilon),(\bm \omega',\bm \upsilon')), \ \ \ \forall (\bm \omega',\bm \upsilon') \in \bm{\mathcal{H}},\\
&b(( p', q'),(\bm \omega,\bm \upsilon))=0, \quad\forall (p',q')\in \mathcal Q.
\end{align}
\end{subequations}
Taking $(\bm \omega',\bm \upsilon')=(\nabla p',\nabla q')$ then the above $\mathfrak p$ and $\mathfrak q$ satisfy
\begin{align}
b(( \mathfrak p, \mathfrak q),(\nabla p',\nabla q'))=\lambda c( (\bm \omega,\bm \upsilon), (\nabla p',\nabla q'))=\lambda \overline{b( ( p', q'), (\bm \omega,\bm \upsilon))}=0, \quad\forall (p',q')\in \mathcal Q.
\end{align}
Then $\mathfrak p=\mathfrak q=0$ according to the  $\mathbb T$-coercivity  property \eqref{tc2}.
 The above problem is equivalent to finding $\lambda\in \mathbb{C}$, $(\bm \omega,\bm \upsilon)\in \bm{\mathcal{H}}_{0}$ and $(\mathfrak p,\mathfrak q)\in \mathcal Q$ such that
\begin{equation}\label{Problem_2s}
\left\{
\begin{array}{rcll}
{\curl}\big(\bm A\curl\bm \omega\big)+\bm N\nabla \mathfrak p&=& \lambda \bm N\bm \omega,&{\rm in}\ \Omega,\\
\curl^2 \bm \upsilon+\nabla \mathfrak q&=& \lambda \bm \upsilon, &{\rm in}\ \Omega,\\
\bm \nu\times(\bm \omega-\bm \upsilon)&=&0,&{\rm on}\ \partial \Omega,\\
\bm \nu\times (\bm A\curl\bm \omega-\curl \bm \upsilon)&=&0,&{\rm on}\ \partial \Omega.
\end{array}
\right.
\end{equation}

\begin{lemma}[Lemma 3.50 in \cite{monk1}]\label{lemmap1}For  any $\bm\chi_1,\bm\chi_2\in \Hcurl{\Omega}$ satisfying $\div\bm\chi_1,\div\bm\chi_2\in \mathbf L^2(\Omega)$ and $\bm\nu\times \bm\chi_1=\bm\nu\cdot \bm\chi_2=0$ on $\partial\Omega$ there is $r\in(1/2,1]$ only dependent on the shape of the domain such that
\begin{align*}
  \|\bm\chi_1\|_{r,\Omega}&\lesssim \|\div\bm\chi_1\|_{}+\|\curl\bm\chi_1\|_{},\\
  \|\bm\chi_2\|_{r,\Omega}&\lesssim \|\div\bm\chi_2\|_{}+\|\curl\bm\chi_2\|_{}.
\end{align*}
\end{lemma}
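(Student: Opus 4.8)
The plan is to read this as a classical Hodge/Helmholtz-decomposition regularity result: I would split each field into a gradient part, whose smoothness is governed by its divergence through the scalar Laplacian, and a divergence-free part, whose smoothness is governed by its curl; the exponent $r$ then emerges from the elliptic shift estimate for the Laplacian on $\Omega$. Since the two boundary conditions $\bm\nu\times\bm\chi_1=0$ and $\bm\nu\cdot\bm\chi_2=0$ are dual to one another, I would establish the two estimates by the same scheme, interchanging the Dirichlet and Neumann problems.

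For $\bm\chi_1$, let $\phi\in H^1_0(\Omega)$ solve the weak Dirichlet problem $\Delta\phi=\div\bm\chi_1$, and set $\bm z:=\bm\chi_1-\nabla\phi$. Because $\phi$ vanishes on $\partial\Omega$, the tangential trace of $\nabla\phi$ vanishes, so $\bm z$ inherits $\bm\nu\times\bm z=0$; moreover $\div\bm z=0$ and $\curl\bm z=\curl\bm\chi_1$. Dirichlet elliptic regularity yields $\phi\in H^{1+r}(\Omega)$ with $\|\nabla\phi\|_{r,\Omega}\lesssim\|\div\bm\chi_1\|$. For the solenoidal remainder $\bm z\in\Hcurltr{\Omega}\cap\Hdiv{\Omega}$, I would introduce a vector potential $\bm z=\curl\bm w$ and reduce the resulting $\curl\curl$ system to a vector elliptic problem, obtaining $\|\bm z\|_{r,\Omega}\lesssim\|\curl\bm z\|=\|\curl\bm\chi_1\|$. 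Summing the two contributions gives the first estimate.

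For $\bm\chi_2$, I would run the dual decomposition: let $\psi$ solve the Neumann problem $\Delta\psi=\div\bm\chi_2$ with $\partial_{\bm\nu}\psi=0$ (solvable since $\int_\Omega\div\bm\chi_2=\int_{\partial\Omega}\bm\nu\cdot\bm\chi_2=0$), so that $\nabla\psi$ has vanishing normal trace. Then $\bm y:=\bm\chi_2-\nabla\psi\in\Hcurl{\Omega}\cap\Hdivn{\Omega}$ is divergence-free with $\bm\nu\cdot\bm y=0$ and $\curl\bm y=\curl\bm\chi_2$, and I would bound the gradient part by Neumann regularity and the solenoidal part by the corresponding vector elliptic estimate; the two pieces again combine to the claimed bound.

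The delicate point — and the only place the geometry of $\Omega$ enters — is the exact range of $r$. On a smooth or convex domain the Laplacian is $H^2$-regular and one may take $r=1$; on a general Lipschitz or polyhedral domain the gradient parts only lie in $H^r$ with $r<1$, and reentrant edges and corners can push $r$ arbitrarily close to $1/2$. The essential fact that $r$ can be taken \emph{strictly} larger than $1/2$ — so that the embedding $\mathbf H^r(\Omega)\hookrightarrow\mathbf L^2(\Omega)$ is compact, which is what the later arguments require — is not elementary: it rests on the sharp regularity theory for the Maxwell spaces $\Hcurltr{\Omega}\cap\Hdiv{\Omega}$ and $\Hcurl{\Omega}\cap\Hdivn{\Omega}$ due to Costabel, Dauge, and Amrouche--Bernardi--Dauge--Girault, and this is precisely the content imported from \cite{monk1}.
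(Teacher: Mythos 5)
The paper does not actually prove this statement: it is imported verbatim as Lemma~3.50 of Monk's book \cite{monk1}, so there is no internal argument to compare against. Your sketch is the standard route to that cited result, and it is essentially sound: the regular decomposition $\bm\chi_1=\nabla\phi+\bm z$ with $\phi\in H^1_0(\Omega)$ (resp.\ the Neumann analogue for $\bm\chi_2$) correctly isolates a gradient part controlled by $\|\div\bm\chi_i\|$ through the elliptic shift theorem and a solenoidal part controlled by $\|\curl\bm\chi_i\|$, and you rightly identify that the whole difficulty is concentrated in the claim $r>1/2$ on nonconvex Lipschitz polyhedra, which is exactly the Costabel/Amrouche--Bernardi--Dauge--Girault content that the citation to \cite{monk1} is standing in for. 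Two points deserve more care if this were to be a self-contained proof. First, the step ``introduce a vector potential $\bm z=\curl\bm w$ and reduce to a vector elliptic problem'' is not routine: establishing the existence of a sufficiently regular potential with the right boundary conditions, and the $\mathbf H^r$ estimate for it, is itself the hard part of the ABDG theory, so as written this step silently assumes what is to be proved. Second, the absence of an $\|\bm\chi_i\|$ term on the right-hand side requires a Friedrichs inequality for the solenoidal parts, which uses that $\Omega$ is simply connected with connected boundary (as the paper assumes); without those topological hypotheses harmonic fields would make the stated inequality false, and your sketch should say where this enters.
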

If $({\bm w}',{\bm v}')\in \bm{\mathcal{H}}_{0}$ and $\bm N=n\bm I$ with $n\in   W^{1,\infty}(\Omega)$ then $0=\div\bm v'=\div(n\bm w')=n\div\bm w'+\nabla n\cdot\bm w'$ in $\Omega$ and $(n{\bm w}'-{\bm v}')\cdot\bm \nu=0$ on $\partial\Omega$ which infers that $\div\bm w'\in\mathbf L^2(\Omega)$; noting that $\curl(n\bm w')=\nabla n\times \bm w'+n\curl\bm w'$ we have $n{\bm w}',{\bm w}',{\bm v}'\in \Hcurl{\Omega}$. According to Lemma \ref{lemmap1}, if  $\bm N=n\bm I$ with $n\in   W^{1,\infty}(\Omega)$ then we have the fact that $({\bm w}',{\bm v}')\in \bm{\mathcal{H}}_{0}$ implies $ {\bm w}'- {\bm v}'\in \mathbf H^r(\Omega)$,  $n{\bm w}'-{\bm v}'\in \mathbf H^r(\Omega)$ and
\begin{align}\label{subset0}
\|(\bm w',\bm v')\|_{r,\Omega}\lesssim \|\curl({\bm w}'- {\bm v}')\|+\|\curl(n{\bm w}'- {\bm v}')\|+\|{\bm w}'\|,
\end{align}
that is
\begin{align}\label{subset}
\bm{\mathcal{H}}_{0}\subset \mathbf H^r(\Omega)\times \mathbf H^r(\Omega).
\end{align}
In particular, when $\bm N=n\bm I$ with a positive constant $n$ we have
 \begin{align}\label{subset00}
\|(\bm w',\bm v')\|_{r,\Omega}\lesssim \|\curl({\bm w}'- {\bm v}')\|+\|\curl(n{\bm w}'- {\bm v}')\|.
\end{align}

\begin{corollary}[Poinc{a}r\'e inequality]\label{Corollary 2.1}For any $({\bm w}_{}',{\bm v}_{}')\in \bm{\mathcal{H}}_{0}$ it holds
\begin{align}\label{po}
    \|({\bm w}_{}',{\bm v}_{}')\|_{}\lesssim \|(\curl\bm w_{}',\curl\bm v_{}')\|_{}.
  \end{align}
\end{corollary}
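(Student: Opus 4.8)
The plan is to establish \eqref{po} by a compactness-and-contradiction argument, the essential analytic ingredient being the compact embedding of $\bm{\mathcal{H}}_{0}$ into $\mathbf L^2(\Omega)\times\mathbf L^2(\Omega)$. This compactness is supplied by the regularity already recorded in \eqref{subset}: since $\bm{\mathcal{H}}_{0}\subset \mathbf H^r(\Omega)\times\mathbf H^r(\Omega)$ with $r>1/2$ and the estimate \eqref{subset0} bounds the $\mathbf H^r$-norm by the curl-norms plus the lower-order term $\|\bm w'\|$, any sequence bounded in the $\bm{\mathcal{H}}_{0}$-norm is bounded in $\mathbf H^r(\Omega)\times\mathbf H^r(\Omega)$, and Rellich's theorem yields a subsequence converging in $\mathbf L^2(\Omega)\times\mathbf L^2(\Omega)$. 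Observe that \eqref{subset0} by itself is not the desired inequality, precisely because its right-hand side still carries the term $\|\bm w'\|$; removing this term is exactly what the compactness step accomplishes.

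Concretely, I would argue by contradiction. If \eqref{po} failed there would be a sequence $(\bm w_k',\bm v_k')\in\bm{\mathcal{H}}_{0}$ normalized by $\|(\bm w_k',\bm v_k')\|=1$ with $\|(\curl\bm w_k',\curl\bm v_k')\|\to0$. This sequence is bounded in the graph norm of $\bm{\mathcal{H}}_{0}$, hence bounded in $\mathbf H^r(\Omega)\times\mathbf H^r(\Omega)$ by \eqref{subset0}, so by the compact embedding above a subsequence converges strongly in $\mathbf L^2(\Omega)\times\mathbf L^2(\Omega)$ to some $(\bm w',\bm v')$ with $\|(\bm w',\bm v')\|=1$. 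Because the curls tend to zero in $\mathbf L^2(\Omega)$, this subsequence is Cauchy in $\Hcurl{\Omega}\times\Hcurl{\Omega}$, whence $\curl\bm w'=\curl\bm v'=0$. Moreover the constraint defining $\bm{\mathcal{H}}_{0}$ is continuous with respect to the $\mathbf L^2(\Omega)\times\mathbf L^2(\Omega)$ topology (it involves only $L^2$ inner products, not curls), so it passes to the limit and $(\bm w',\bm v')\in\bm{\mathcal{H}}_{0}$.

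It then remains to show that this curl-free limit must vanish. Since $\Omega$ is simply connected, $\curl\bm w'=\curl\bm v'=0$ gives $\bm w'=\nabla\alpha$ and $\bm v'=\nabla\beta$ for some $\alpha,\beta\in H^1(\Omega)$; the membership $(\bm w',\bm v')\in\bm{\mathcal{H}}_{0}\subset\bm{\mathcal{H}}$ forces $\nabla(\alpha-\beta)\times\bm\nu=0$ on $\partial\Omega$, i.e. $\alpha-\beta$ is constant there, so after adjusting the free additive constants in the potentials one may arrange $\int_{\partial\Omega}\alpha=\int_{\partial\Omega}\beta=0$ and $\alpha-\beta\in H^1_0(\Omega)$, that is $(\alpha,\beta)\in\mathcal Q$. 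Hence $(\bm w',\bm v')=(\nabla\alpha,\nabla\beta)$ lies in the gradient component of the direct sum \eqref{sum}, while at the same time $(\bm w',\bm v')\in\bm{\mathcal{H}}_{0}$; the directness of \eqref{sum} forces $(\bm w',\bm v')=0$, contradicting $\|(\bm w',\bm v')\|=1$ and thereby proving \eqref{po}.

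The main obstacle is the compact embedding in the first step: it rests entirely on the $\mathbf H^r$-regularity \eqref{subset}, which is available here only for $\bm N=n\bm I$ through Lemma \ref{lemmap1}. A secondary but genuine subtlety is the identification of the curl-free limit with a gradient pair in $\mathcal Q$: this uses the topological triviality of $\Omega$ (a connected boundary), so that the single free additive constant in each potential suffices to meet all three defining constraints of $\mathcal Q$; were $\partial\Omega$ to have several components, the matching of the constant values of $\alpha-\beta$ on each component would require additional care.
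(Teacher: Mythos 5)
Your argument is internally coherent, but it does not prove the corollary as stated: the compact embedding you rely on comes from \eqref{subset0}--\eqref{subset}, which the paper establishes only when $\bm N=n\bm I$ with $n\in W^{1,\infty}(\Omega)$ (it is only then that the constraint $\div(n\bm w')=0$ yields $\div\bm w'\in\mathbf L^2(\Omega)$ so that Lemma \ref{lemmap1} applies). For a genuinely anisotropic Hermitian matrix $\bm N$ --- the case this paper is written for, and the case in which the corollary is later invoked (e.g.\ to rule out $\lambda=0$ in \eqref{Problem_Weakk}) --- membership in $\bm{\mathcal{H}}_0$ only controls $\div(\bm N\bm w')$, from which no $\mathbf H^r$ regularity and hence no compactness of $\bm{\mathcal{H}}_0$ follows by the tools available in the paper. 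You flag this yourself as ``the main obstacle,'' but flagging it is not the same as overcoming it: as written, your proof covers only a special case of the statement.

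The missing idea is the paper's reduction to a fixed reference space. Writing $\bm{\mathcal{H}}_0^S$ for the space $\bm{\mathcal{H}}_0$ built with $\bm N=2\bm I$, the estimate \eqref{subset00} gives the Poincar\'e inequality on $\bm{\mathcal{H}}_0^S$ directly, with no lower-order term, so no compactness argument is needed there at all. For general $\bm N$ one decomposes $(\bm w',\bm v')=(\bm w^S,\bm v^S)+(\nabla p,\nabla q)$ via the direct sum \eqref{sum} associated with $\bm{\mathcal{H}}_0^S$, notes that $\curl\bm w^S=\curl\bm w'$ and $\curl\bm v^S=\curl\bm v'$, and uses the defining constraint $c((\bm w',\bm v'),(\nabla p',\nabla q'))=0$ for all $(p',q')\in\mathcal Q$ (together with the fact that $\mathbb T_c$ maps gradient pairs in $\mathcal Q$ to gradient pairs in $\mathcal Q$) to get $c((\bm w',\bm v'),\mathbb T_c(\bm w',\bm v'))=c((\bm w',\bm v'),\mathbb T_c(\bm w^S,\bm v^S))$; the $\mathbb T_c$-coercivity \eqref{tc2} then yields $\|(\bm w',\bm v')\|\lesssim\|(\bm w^S,\bm v^S)\|\lesssim\|(\curl\bm w',\curl\bm v')\|$. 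Your contradiction argument would serve as a valid (if longer) alternative in the isotropic variable-coefficient case, and your treatment of the curl-free limit and of the role of a connected boundary is sound; but without the reduction step the anisotropic case remains unproved.
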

\begin{proof}We represent $\bm{\mathcal{H}}_{0}^S$ as  the special $\bm{\mathcal{H}}_{0}$ in the case of   $\bm N=2\bm I$.  Obviously the Poinc{a}r\'e inequality \eqref{po} is valid for $({\bm w}_{}',{\bm v}_{}')\in\bm{\mathcal{H}}_{0}^S$ due to \eqref{subset00}. Let  $(\bm{w}' ,\bm{v}')\in\bm{\mathcal{H}}_0$.
  Using the  decomposition \eqref{sum} of  $\bm{\mathcal{H}}_0^S$, we have
$(\bm{w}',\bm{v}')=(\bm{w}^S,\bm{v}^S)+(\nabla p,\nabla q)$  with some $(\bm w^S,\bm v^S)\in \bm {\mathcal H}^S_{0}$ and some $( p, q)\in \mathcal Q_{}$.
   Using the fact that  $(\boldsymbol{w}'_{},\boldsymbol{v}'_{}) $    is in  $\bm{\mathcal{H}}_0$   we have
 $$c((\bm w',\bm v'),(\nabla p',\nabla q'))=0,~\forall ( p',  q')\in \mathcal Q_{}.$$
It follows that
       \begin{align}\label{cc1}
    c((\bm w',\bm v'),\mathbb T_c(\bm w',\bm v'))
=c((\bm w',\bm v'),\mathbb T_c(\bm w^S,\bm v^S)).
    \end{align}
Hence
 $$\left\|(\bm w',\bm v')\right\|_{} \lesssim\left\|(\bm w^S,\bm v^S)\right\|_{ }\mathop{\lesssim}^{\text{by }\eqref{subset00}} \|(\curl\bm w^S,\curl\bm v^S)\|_{}= \|(\curl\bm w_{}',\curl\bm v_{}')\|_{}.$$
This concludes the proof.
\end{proof}

\begin{lemma}\label{lemmadense}$\mathbf H:=\{(\bm{w}',\bm{v}')\in \mathbf C^\infty(\Omega)\times\mathbf C^\infty(\Omega) : \bm{w}'-\bm{v}'\in\Hcurltr{\Omega}\}$ is dense in $\bm{\mathcal{H}}$ and
$Q=:\{(p',q')\in C^\infty(\Omega)\times C^\infty(\Omega) :\int_{\partial\Omega}p'=\int_{\partial\Omega}q'=0\text{ and } p'-q'\in H^1_0(\Omega)\}$ is dense in $\mathcal Q$.
\end{lemma}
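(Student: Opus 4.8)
The plan is to remove the coupling constraint by a linear change of variables, reducing each density claim to classical density theorems for $\Hcurl{\Omega}$, $\Hcurltr{\Omega}$, $H^1(\Omega)$ and $H^1_0(\Omega)$. The coupling only ties the \emph{difference} of the two components together, so after subtracting one component from the other the two resulting fields decouple and can be approximated independently.

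For the first claim I would use the isomorphism $(\bm w',\bm v')\mapsto(\bm w'-\bm v',\bm v')$, which maps $\bm{\mathcal H}$ bijectively and bicontinuously onto $\Hcurltr{\Omega}\times\Hcurl{\Omega}$. Given $(\bm w',\bm v')\in\bm{\mathcal H}$, write $\bm u'=\bm w'-\bm v'\in\Hcurltr{\Omega}$. I would then invoke the two standard facts on a Lipschitz domain: $\mathbf C_0^\infty(\Omega)^3$ is dense in $\Hcurltr{\Omega}$ (the defining property of $\mathbf H_0(\curl,\Omega)$) and $\mathbf C^\infty(\overline\Omega)^3$ is dense in $\Hcurl{\Omega}$. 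Choosing $\bm u_\varepsilon'\in\mathbf C_0^\infty(\Omega)^3$ with $\bm u_\varepsilon'\to\bm u'$ and $\bm v_\varepsilon'\in\mathbf C^\infty(\overline\Omega)^3$ with $\bm v_\varepsilon'\to\bm v'$, both in the $\Hcurl{\Omega}$ norm, I set $\bm w_\varepsilon'=\bm u_\varepsilon'+\bm v_\varepsilon'$. Then $(\bm w_\varepsilon',\bm v_\varepsilon')$ is a pair of smooth fields whose difference $\bm u_\varepsilon'$ has compact support, hence lies in $\Hcurltr{\Omega}$; so $(\bm w_\varepsilon',\bm v_\varepsilon')\in\mathbf H$, and the triangle inequality gives $(\bm w_\varepsilon',\bm v_\varepsilon')\to(\bm w',\bm v')$ in $\bm{\mathcal H}$.

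For the second claim I would argue in the same spirit. Given $(p',q')\in\mathcal Q$, note that $s:=p'-q'\in H^1_0(\Omega)$ while $q'\in H^1(\Omega)$ satisfies $\int_{\partial\Omega}q'=0$ (the condition $\int_{\partial\Omega}p'=0$ being redundant, since the traces of $p'$ and $q'$ coincide). I would approximate $s$ by $s_\varepsilon\in C_0^\infty(\Omega)$ in $H^1$, and $q'$ by $\hat q_\varepsilon'\in C^\infty(\overline\Omega)$ in $H^1$. The only subtlety is the boundary-average constraint: $\hat q_\varepsilon'$ need not satisfy $\int_{\partial\Omega}\hat q_\varepsilon'=0$, but by trace continuity $\int_{\partial\Omega}\hat q_\varepsilon'\to\int_{\partial\Omega}q'=0$, so subtracting the small constant $c_\varepsilon:=|\partial\Omega|^{-1}\int_{\partial\Omega}\hat q_\varepsilon'$ produces $q_\varepsilon':=\hat q_\varepsilon'-c_\varepsilon$ with zero boundary average and still $q_\varepsilon'\to q'$ in $H^1$. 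Setting $p_\varepsilon':=s_\varepsilon+q_\varepsilon'$, the pair $(p_\varepsilon',q_\varepsilon')$ is smooth, satisfies $p_\varepsilon'-q_\varepsilon'=s_\varepsilon\in C_0^\infty(\Omega)\subset H^1_0(\Omega)$, and $\int_{\partial\Omega}p_\varepsilon'=\int_{\partial\Omega}s_\varepsilon+\int_{\partial\Omega}q_\varepsilon'=0$; hence $(p_\varepsilon',q_\varepsilon')\in Q$ and converges to $(p',q')$ in $\mathcal Q$.

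The routine parts are the triangle-inequality estimates; the only genuine ingredients are the two scalar/vector density theorems on a Lipschitz domain and, for $\mathcal Q$, the constant correction enforcing the zero boundary average without destroying $H^1$-convergence. The key point to get right is that the change of variables must send the constraint ``difference has vanishing (tangential) trace'' to an unconstrained factor approximated by compactly supported smooth fields, since these automatically recombine into admissible smooth pairs; I expect the density of $\mathbf C^\infty(\overline\Omega)^3$ in $\Hcurl{\Omega}$ to be the one result whose hypothesis on $\Omega$ (Lipschitz regularity) should be stated explicitly.
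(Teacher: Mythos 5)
Your proof is correct, but it takes a genuinely different route from the paper. The paper argues by duality: it writes $\bm{\mathcal{H}}=\overline{\mathbf H}\oplus\overline{\mathbf H}^\bot$ and shows $\overline{\mathbf H}^\bot=\{\bm 0\}$ by testing the orthogonality relation first against pairs $(\bm\phi,\bm 0)$ with $\bm\phi$ compactly supported (to get $\curl^2\widetilde{\bm w}+\widetilde{\bm w}=\curl^2\widetilde{\bm v}+\widetilde{\bm v}=0$, whence $\widetilde{\bm w}=\widetilde{\bm v}$ using the vanishing tangential trace of the difference) and then against diagonal pairs $(\bm w',\bm w')$ to conclude $\widetilde{\bm w}=0$; the same scheme is reused for $\mathcal Q$. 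You instead decouple the constraint through the bicontinuous isomorphism $(\bm w',\bm v')\mapsto(\bm w'-\bm v',\bm v')$ onto $\Hcurltr{\Omega}\times\Hcurl{\Omega}$ and construct the approximants explicitly from the classical density of $\mathbf C_0^\infty(\Omega)^3$ in $\Hcurltr{\Omega}$ and of $\mathbf C^\infty(\overline\Omega)^3$ in $\Hcurl{\Omega}$ (and their scalar analogues, with the constant correction for the boundary-average constraint). The two approaches rest on essentially the same classical ingredients -- the paper's last step also silently uses density of smooth fields in $\Hcurl{\Omega}$ -- but yours is constructive, makes the Lipschitz hypothesis on $\Omega$ explicit, and yields the slightly stronger statement that the approximating pairs can be chosen with compactly supported difference; the paper's argument is shorter and avoids having to exhibit the isomorphism, at the price of a distributional PDE argument and an integration by parts that it leaves implicit. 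Both are valid proofs of the lemma.
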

\begin{proof}Introduce the orthogonal decomposition $\bm{\mathcal{H}}=\overline{\mathbf H}\bigoplus\overline{\mathbf H}^\bot$. It suffices to prove
$\overline{\mathbf H}^\bot=\{\bm 0\}$. Let $(\widetilde{\bm w},\widetilde{\bm v})\in \bm{\mathcal{H}}$ be such that
\begin{align} \label{or}
(\curl \widetilde{\bm w},\curl \bm w')+(\curl \widetilde{\bm v},\curl \bm v')+(\widetilde{\bm w},\bm w')+(\widetilde{\bm v}, \bm v')=0,\quad \forall (\bm w',\bm v')\in \mathbf H.
\end{align}
Then $\curl^2 \widetilde{\bm w}+\widetilde{\bm w}=\curl^2 \widetilde{\bm v}+\widetilde{\bm v}=0$ in $\Omega$, which together with $\widetilde{\bm{w}}-\widetilde{\bm{v}}\in\Hcurltr{\Omega}$ yields $\widetilde{\bm{w}}=\widetilde{\bm{v}}$ in $\Omega$. Taking ${\bm{w}}'= {\bm{v}}'$ in \eqref{or},  we have $$(\curl \widetilde{\bm w},\curl \bm w')+(\widetilde{\bm w},\bm w')=0,~\forall \bm w'\in\Hcurl{\Omega}.$$ Hence $\widetilde{\bm w}=0$ in $\Omega.$ This means $\overline{\mathbf H}^\bot=\{\bm 0\}$. The similar argument as above can show that $\overline{\mathcal Q}^\bot=\{\bm 0\}$. Then $\mathcal Q=\overline{ Q}$.
\end{proof}

\section{Edge element discretizations and error analysis}
\subsection{Edge element discretizations and  source problems}
Let $\mathcal T_h=\{K\}$ be a tetrahedral mesh of $\Omega$.
The $k$($k\ge0$) order edge element of the first family \cite{nedelec} generates the space
$$\mathbf V_h:=\{\bm{v}'\in\mathbf H_0(\mathrm{curl},\Omega):\bm{v}'|_K\in [P_k(K)]^3\bigoplus\bm{x}\times [\tilde{P}_k(K)]^3,~\forall K\in \mathcal T_h\},$$
where $P_k(K)$ is the polynomial space of  degree less than or equal to $ k$ on $K$, ${\tilde{P}}_k(K)$ is the
homogeneous polynomial space of  degree $ k$  on $K$, and $\bm x =(x_1,x_2,x_3)^T$. Hence we can define the finite element spaces associated with $\bm{\mathcal{H}}$ and $Q$ as follows:
\begin{align*}
\bm{\mathcal{H}}_h&:=\bm{\mathcal{H}}\cap (\mathbf V_h\times \mathbf V_h),\\
\mathcal Q_h&:=\{(p,q)\in \mathcal Q: p|_K,q|_K\in {P}_{k+1}(K),~ \forall K\in \mathcal T_h\},\\
\bm{\mathcal{H}}_{0,h}&:=\{(\bm \omega',\bm \upsilon')\in \bm{\mathcal{H}}_h: b(( p', q'),(\bm \omega',\bm \upsilon'))=0,\quad\forall (p', q')\in \mathcal Q_h\}.
\end{align*}

Similar as \eqref{sum}, the following discrete direct sum decomposition holds:
\begin{align}\label{dsum}
\bm{\mathcal{H}}_h=\bm{\mathcal{H}}_{0,h}\bigoplus\{(\nabla p',\nabla q'):(p',q')\in \mathcal Q_h\}.
\end{align}

The discrete variational form of (\ref{Problem_Weak}) can be defined as follows:
Find   $\lambda_h\in \mathbb C\backslash\{0\}$ and nonzero $(\bm \omega_h,\bm \upsilon_h) \in \bm{\mathcal{H}}_{h}$ such that
\begin{align}
a((\bm \omega_h,\bm \upsilon_h),(\bm \omega',\bm \upsilon'))&=\lambda_h c((\bm \omega_h,\bm \upsilon_h),(\bm \omega',\bm \upsilon')), \ \ \ \forall (\bm \omega',\bm \upsilon') \in \bm{\mathcal{H}}_{h}.
\end{align}

The above variational form is equivalent to  finding
 $\lambda_h\in \mathbb C$, nozero $(\bm \omega_h,\bm \upsilon_h) \in \bm{\mathcal{H}}_h$ and $( \mathfrak p_h, \mathfrak q_h)\in \mathcal Q_h$ such that
\begin{subequations}\label{Problem_Weakd}
\begin{align}
a((\bm \omega_h,\bm \upsilon_h),(\bm \omega',\bm \upsilon'))+&b(( \mathfrak p_h, \mathfrak q_h),(\bm \omega',\bm \upsilon'))=\lambda_h c((\bm \omega_h,\bm \upsilon_h),(\bm \omega',\bm \upsilon')), \ \ \ \forall (\bm \omega',\bm \upsilon') \in \bm{\mathcal{H}}_h,\\
&b(( p', q'),(\bm \omega_h,\bm \upsilon_h))=0, \quad\forall (p',q')\in \mathcal Q_h.
\end{align}
\end{subequations}
Taking $(\bm \omega',\bm \upsilon'):=(\nabla p',\nabla q')$ in \eqref{Problem_Weakd} we have $c(( \nabla \mathfrak p_h, \nabla \mathfrak q_h),(\nabla \mathfrak p', \nabla \mathfrak q'))=0,~\forall (\mathfrak p',  \mathfrak q')\in \mathcal Q_h$. Then $\mathfrak p_h=\mathfrak q_h=0$ due to \eqref{tc2}.

In order to analyze the convergence of the discretization \eqref{Problem_Weakd}, we need to consider the source problems associated with \eqref{Problem_Weak} and \eqref{Problem_Weakd}, respectively.

 Define the following Maxwell's transmission problem: Find $(\bm w,\bm v)\in \bm{\mathcal{H}}_{0}$ and $(p,q)\in Q$ such that
\begin{equation}\label{Problem_2ss}
\left\{
\begin{array}{rcll}
{\curl}\big(\bm A\curl\bm w\big)+\bm N\nabla p&=& \bm N\bm f,&{\rm in}\ \Omega,\\
\curl^2 \bm v+\nabla q&=& \bm g, &{\rm in}\ \Omega,\\
\bm \nu\times(\bm w-\bm v)&=&0,&{\rm on}\ \partial \Omega,\\
\bm \nu\times (\bm A\curl\bm w-\curl \bm v)&=&0,&{\rm on}\ \partial \Omega.
\end{array}
\right.
\end{equation}

The associated variational form of (\ref{Problem_2ss}) can be defined as follows:
Find $(\bm w,\bm v) \in \bm{\mathcal{H}}$ and $( p, q)\in\mathcal Q$ such that
\begin{subequations}\label{ProblemWeak}
\begin{align}
a((\bm w,\bm v),(\bm w',\bm v'))+&b(( p, q),(\bm w',\bm v'))=  c((\bm f,\bm g),(\bm w',\bm v')), \ \ \ \forall (\bm w',\bm v') \in \bm{\mathcal{H}},\\
&b(( p', q'),(\bm w,\bm v))=0, \quad\forall (p',q')\in\mathcal Q,
\end{align}
\end{subequations}
which is equivalent to two problems as follows: Find
   $( p, q)\in\mathcal Q$ such that
\begin{align}\label{Problem_Weak2}
c((\nabla p,\nabla q),(\nabla p',\nabla q'))=  c((\bm f,\bm g),(\nabla p',\nabla q')) , \quad \forall (p',q') \in\mathcal Q,
\end{align}
and find
 $(\bm w,\bm v) \in \bm{\mathcal{H}}_0$ such that
\begin{align}\label{Problem_Weak1}
a((\bm w,\bm v),\mathbb T_a(\bm w',\bm v'))=  c((\bm f,\bm g),\mathbb T_a(\bm w',\bm v'))-c((\nabla p,\nabla q),\mathbb T_a(\bm w',\bm v')), \quad \forall (\bm w',\bm v') \in \bm{\mathcal{H}}_0,
\end{align}
or  \begin{align}
a((\bm w,\bm v),(\bm w',\bm v'))=  c((\bm f,\bm g),(\bm w',\bm v')), \quad \forall (\bm w',\bm v') \in \bm{\mathcal{H}}_{0}.
\end{align}
The above equivalence is obtained by the decomposition \eqref{sum} and the fact that
$$c((\nabla p,\nabla q),\mathbb T_a(\nabla p',\nabla q'))=c((\bm f,\bm g),\mathbb T_a(\nabla p',\nabla q')),~\forall(p',q')\in\mathcal Q.$$
The $\mathbb T$-coercivity properties \eqref{tc1} and \eqref{tc2} implies that the problems \eqref{Problem_Weak2} and  \eqref{Problem_Weak1} are uniquely solvable, respectively. This leads to the well-posedness of the above problem   and the stability
\begin{align}\label{sta}
\|(\bm w,\bm v)\|_{\curl}\lesssim \|(\bm f,\bm g)\|.
\end{align}

The discrete variational form of (\ref{Problem_2ss}) can be defined as follows:
Find $(\bm w_h,\bm v_h) \in \bm{\mathcal{H}}_h$ and $( p_h, q_h)\in\mathcal Q_h$ such that
\begin{subequations}\label{Problem_Weakdd}
\begin{align}
a((\bm w_h,\bm v_h),(\bm w',\bm v'))+&b(( p_h, q_h),(\bm w',\bm v'))=  c((\bm f,\bm g),(\bm w',\bm v')), \ \ \ \forall (\bm w',\bm v') \in \bm{\mathcal{H}}_h,\\
&b(( p', q'),(\bm w_h,\bm v_h))=0, \quad\forall (p',q')\in\mathcal Q_h,
\end{align}
\end{subequations}
which is equivalent to two problems as follows: Find
   $( p_h, q_h)\in \mathcal Q_h$ such that
\begin{align}\label{Problem_Weak2d}
c((\nabla p_h,\nabla q_h),(\nabla p',\nabla q'))=  c((\bm f,\bm g),(\nabla p',\nabla q')) , \quad \forall (p',q') \in \mathcal Q_h,
\end{align}
and find
 $(\bm w_h,\bm v_h) \in \bm{\mathcal{H}}_{0,h}$ such that
\begin{align}\label{Problem_Weak1d}
a((\bm w_h,\bm v_h),\mathbb T_a(\bm w',\bm v'))=  c((\bm f,\bm g),\mathbb T_a(\bm w',\bm v'))-c((\nabla p_h,\nabla q_h),\mathbb T(\bm w',\bm v')), \quad \forall (\bm w',\bm v') \in \bm{\mathcal{H}}_{0,h},
\end{align}
or \begin{align}
a((\bm w_h,\bm v_h),(\bm w',\bm v'))=  c((\bm f,\bm g),(\bm w',\bm v')), \quad \forall (\bm w',\bm v') \in \bm{\mathcal{H}}_{0,h}.
\end{align}
The above equivalence is obtained by  the decomposition \eqref{dsum}
and the fact that
$$c((\nabla p_h,\nabla q_h),\mathbb T_a(\nabla p',\nabla q'))=c((\bm f,\bm g),\mathbb T_a(\nabla p',\nabla q')),~\forall(p',q')\in\mathcal Q_h.$$

\subsection{Error analysis via discrete compactness}
Let $\bm r_h$  be the edge element interpolation operator on $\mathbf V_h$ then we have the following.
\begin{lemma}[Theorem 5.41 in \cite{monk1}] We have the following estimates.

 (1) If  $\bm{u} \in \mathbf{H}^{s}(K)$ for  $1 / 2<s \leq 1$  and $ \left.\curl\bm{u}\right|_{K} \in \curl([P_k(K)]^3\bigoplus\bm{x}\times [\tilde{P}_k(K)]^3)$, then
\begin{align}\label{i1}
\left\|\bm{u}-\bm{r}_{h} \bm{u}\right\|_{0, K} \lesssim h_{K}^s\|\bm{u}\|_{s, K}+h_{K}\|\mathbf{c u r l} \bm{u}\|_{0, K}.
\end{align}
(2) If $\bm{u} \in  \mathbf H^s(\curl,\Omega)$ for  $1 / 2<s \leq k+1$, then
\begin{align}\label{i2}
\left\|\bm{u}-\bm{r}_{h} \bm{u}\right\|_{\Hcurl{K})} \lesssim h^{s}(\|\bm{u}\|_{s,K}+\|\curl\bm{u}\|_{s,K}).
\end{align}
\end{lemma}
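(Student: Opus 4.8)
The plan is to follow the classical scaling-and-Bramble--Hilbert argument for N\'ed\'elec interpolation, reducing everything to a fixed reference tetrahedron $\hat K$ via the covariant (Piola) transform. First I would verify that $\bm r_h\bm u$ is well defined under the stated hypotheses. The degrees of freedom of the first-family edge element consist of tangential edge moments, tangential face moments, and interior volume moments. The edge moments are the delicate ones: a tangential trace on a one-dimensional edge of a tetrahedron is a bounded functional on $\mathbf H^s(K)$ precisely when $s>1/2$, which is exactly the lower bound imposed in both parts. The extra hypothesis $\curl\bm u|_K\in\curl([P_k(K)]^3\oplus\bm x\times[\tilde P_k(K)]^3)$ in part (1) guarantees that the face and interior moments, which are paired against the curl, remain meaningful even though only $\mathbf H^s$ regularity with $s\le 1$ is assumed for $\bm u$ itself.

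Second, I would pass to the reference element. Writing $F_K$ for the affine map $\hat K\to K$ with Jacobian $B$, the covariant transform $\bm u\mapsto\hat{\bm u}:=B^{\!\top}(\bm u\circ F_K)$ preserves tangential traces and commutes with interpolation, so $\widehat{\bm r_h\bm u}=\hat{\bm r}\,\hat{\bm u}$. On $\hat K$ the operator $\hat{\bm r}$ is bounded from $\mathbf H^s(\hat K)$ (together with the curl hypothesis) into $\mathbf L^2(\hat K)$, because each reference degree of freedom is a bounded functional there and $\hat{\bm r}$ reproduces the finite-dimensional element space, in particular all of $[P_k(\hat K)]^3$. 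Subtracting the best polynomial approximant $\hat{\bm p}$ of $\hat{\bm u}$ and invoking the Bramble--Hilbert lemma on $\hat K$ yields $\|\hat{\bm u}-\hat{\bm r}\hat{\bm u}\|_{0,\hat K}\lesssim|\hat{\bm u}|_{s,\hat K}+\|\curl\hat{\bm u}\|_{0,\hat K}$, where the second term records the curl-paired moments. Mapping back through $F_K$ and tracking the scaling of each norm under the Piola transform, which introduces the powers $h_K^s$ and $h_K$ on the two contributions, produces estimate \eqref{i1}.

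For part (2) I would add the curl estimate via the commuting diagram property of the de Rham sequence, $\curl\bm r_h\bm u=\bm w_h\curl\bm u$, where $\bm w_h$ is the interpolation operator onto the divergence-conforming (Raviart--Thomas) face-element space. Hence $\curl(\bm u-\bm r_h\bm u)=\curl\bm u-\bm w_h\curl\bm u$ is itself a Raviart--Thomas interpolation error, which obeys the standard bound $\|\curl\bm u-\bm w_h\curl\bm u\|_{0,K}\lesssim h^s\|\curl\bm u\|_{s,K}$ for $1/2<s\le k+1$ by the same scaling and Bramble--Hilbert machinery applied to the face element. Combining this with the $\mathbf L^2$ estimate from part (1), now using the full polynomial degree $k+1$ so that $\|\bm u-\bm r_h\bm u\|_{0,K}\lesssim h^s\|\bm u\|_{s,K}$, and summing both contributions gives \eqref{i2}.

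The main obstacle will be the well-definedness and uniform boundedness of the degrees of freedom at the low end of the regularity scale $1/2<s\le 1$: the edge moments are only barely defined there, and one must use the curl hypothesis together with the commuting diagram to control the face and interior moments without assuming any extra smoothness of $\bm u$. Everything else is bookkeeping of the Piola scaling, which is routine once the reference-element boundedness is in place.
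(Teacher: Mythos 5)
The paper does not prove this lemma at all: it is quoted verbatim as Theorem 5.41 of Monk's book \cite{monk1}, so there is no in-paper argument to compare against. Your sketch correctly reproduces the standard proof from that reference --- well-definedness of the edge degrees of freedom for $s>1/2$, covariant pull-back to the reference tetrahedron, Bramble--Hilbert with the curl-paired moments tracked separately (yielding the $h_K^s$ and $h_K$ factors), and the commuting-diagram property with the Raviart--Thomas interpolant for the curl part of (2) --- so it is a sound justification of the cited result.
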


Like the Hodge mapping in \cite{monk1, hiptmair}, any element in $\bm{\mathcal{H}}_{0,h}$ has its approximation in $\bm{\mathcal{H}}_{0}$.
\begin{lemma}\label{Lemma 3.2}Assume  $\bm N=n\bm I$ with $n\in   W^{1,\infty}(\Omega)$. For any $(\bm{w}'_h,\bm{v}'_h)\in \bm{\mathcal{H}}_{0,h}$, there is $(\bm{w}^0,\bm{v}^0)\in \bm{\mathcal{H}}_{0}$ satisfying
$\curl\bm{w}^0=\curl\bm{w}'_h$ and  $\curl\bm{v}^0=\curl\bm{v}'_h$ such that
\begin{align}\label{p1}
&\|\bm{v}^0\|_{r,\Omega}+\|\bm{w}^0\|_{r,\Omega}\lesssim \|\curl\bm{v}'_h\|+\|\curl\bm{w}'_h\|,\\
&\|\bm{w}^0-\bm{w}'_h\|+\|\bm{v}^0-\bm{v}'_h\|\lesssim h^{r} ( \|\curl {\bm{w}}'_h \|+\|\curl{\bm { v}}'_h\|_{}).\label{p2}
\end{align}
\end{lemma}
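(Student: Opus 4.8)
The plan is to define $(\bm w^0,\bm v^0)$ as the component of $(\bm w'_h,\bm v'_h)$ lying in $\bm{\mathcal H}_0$ under the continuous decomposition \eqref{sum}, writing $(\bm w'_h,\bm v'_h)=(\bm w^0,\bm v^0)+(\nabla p,\nabla q)$ with $(p,q)\in\mathcal Q$. Since gradients are curl-free this immediately forces $\curl\bm w^0=\curl\bm w'_h$ and $\curl\bm v^0=\curl\bm v'_h$. For the regularity bound \eqref{p1} I would feed $(\bm w^0,\bm v^0)\in\bm{\mathcal H}_0$ into \eqref{subset0}: there $\curl(\bm w^0-\bm v^0)=\curl\bm w'_h-\curl\bm v'_h$, while $\curl(n\bm w^0-\bm v^0)=\nabla n\times\bm w^0+n\curl\bm w'_h-\curl\bm v'_h$ is controlled by $\|\bm w^0\|+\|\curl\bm w'_h\|+\|\curl\bm v'_h\|$ because $n\in W^{1,\infty}(\Omega)$, and the remaining $\|\bm w^0\|$ is absorbed by the Poincar\'e inequality of Corollary \ref{Corollary 2.1}, i.e. $\|\bm w^0\|\le\|(\bm w^0,\bm v^0)\|\lesssim\|(\curl\bm w^0,\curl\bm v^0)\|=\|(\curl\bm w'_h,\curl\bm v'_h)\|$. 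This delivers \eqref{p1}.

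For \eqref{p2} note that $(\bm w^0-\bm w'_h,\bm v^0-\bm v'_h)=-(\nabla p,\nabla q)$, so it suffices to bound $\|\nabla p\|+\|\nabla q\|$. The key structural fact is a Galerkin orthogonality: subtracting the defining relations of $\bm{\mathcal H}_{0,h}$ and $\bm{\mathcal H}_{0}$ tested against $(p'_h,q'_h)\in\mathcal Q_h\subset\mathcal Q$ gives $b((p'_h,q'_h),(\nabla p,\nabla q))=0$, that is $c((\nabla p,\nabla q),(\nabla p'_h,\nabla q'_h))=0$ for all $(p'_h,q'_h)\in\mathcal Q_h$ via the symmetry \eqref{sp}. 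Now I apply the $\mathbb T$-coercivity \eqref{tc2} to the gradient pair. For any discrete gradient $g_h=(\nabla\psi_h,\nabla\zeta_h)$ with $(\psi_h,\zeta_h)\in\mathcal Q_h$, the operator $\mathbb T_c$ maps $g_h$ to another discrete gradient (one checks that $(\psi_h,2\psi_h-\zeta_h)$, resp. $(\psi_h-2\zeta_h,-\zeta_h)$, again lies in $\mathcal Q_h$), so the orthogonality annihilates the $\mathbb T_c g_h$ contribution and
\begin{align*}
\|\nabla p\|^2+\|\nabla q\|^2&\lesssim\big|c((\nabla p,\nabla q),\mathbb T_c(\nabla p,\nabla q))\big|=\big|c((\nabla p,\nabla q),\mathbb T_c((\nabla p,\nabla q)-g_h))\big|\\
&\lesssim(\|\nabla p\|+\|\nabla q\|)\,\|(\nabla p,\nabla q)-g_h\|.
\end{align*}
Hence $\|\nabla p\|+\|\nabla q\|\lesssim\inf_{g_h}\|(\nabla p,\nabla q)-g_h\|$, a best-approximation estimate over discrete gradients.

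It then remains to exhibit a good $g_h$, and here the decisive step is the commuting-diagram property of the N\'ed\'elec interpolation $\bm r_h$ together with the discrete exact sequence. Since $\curl\bm w^0=\curl\bm w'_h\in\curl\mathbf V_h$, the commuting diagram gives $\curl(\bm r_h\bm w^0)=\curl\bm w'_h$, so $\bm w'_h-\bm r_h\bm w^0$ is curl-free and is therefore a discrete gradient $\nabla\psi_h$; likewise $\bm v'_h-\bm r_h\bm v^0=\nabla\zeta_h$. Choosing $g_h=(\bm w'_h-\bm r_h\bm w^0,\bm v'_h-\bm r_h\bm v^0)$ yields $(\nabla p,\nabla q)-g_h=(\bm r_h\bm w^0-\bm w^0,\bm r_h\bm v^0-\bm v^0)$, whose norm is controlled via the interpolation estimate \eqref{i1} (applicable since $\bm w^0,\bm v^0\in\mathbf H^r(\Omega)$ and their curls lie elementwise in the local curl space) by $h^r(\|\bm w^0\|_{r,\Omega}+\|\bm v^0\|_{r,\Omega})+h(\|\curl\bm w^0\|+\|\curl\bm v^0\|)$. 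Combining with \eqref{p1} and $\curl\bm w^0=\curl\bm w'_h$, $\curl\bm v^0=\curl\bm v'_h$ gives exactly \eqref{p2}. The main obstacle I anticipate is the boundary-trace bookkeeping needed to guarantee that the scalar potentials $(\psi_h,\zeta_h)$ genuinely belong to $\mathcal Q_h$: after normalising them to have vanishing boundary mean, one must check $\psi_h-\zeta_h\in H^1_0(\Omega)$, which follows because $\nabla(\psi_h-\zeta_h)=\bm r_h(\bm w^0-\bm v^0)-(\bm w'_h-\bm v'_h)\in\Hcurltr{\Omega}$ is curl-free with zero tangential trace, so $\psi_h-\zeta_h$ is constant on the connected boundary $\partial\Omega$ and the zero-mean normalisation forces that constant to vanish.
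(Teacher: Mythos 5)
Your proposal is correct and follows essentially the same route as the paper: the paper defines $(\bm w^0,\bm v^0)$ through an auxiliary mixed problem that reduces to exactly your projection $(\bm w'_h+\nabla t,\bm v'_h+\nabla u)$ onto $\bm{\mathcal H}_0$, proves \eqref{p1} via the embedding \eqref{subset} and Corollary \ref{Corollary 2.1}, and proves \eqref{p2} by combining the commuting-diagram identity $\curl\bm r_h\bm w^0=\curl\bm w'_h$, the discrete exact sequence giving $\bm r_h\bm w^0-\bm w'_h=\nabla t_h$ with $(t_h,u_h)\in\mathcal Q_h$, the orthogonality $c((\bm w^0-\bm w'_h,\bm v^0-\bm v'_h),\mathbb T_c(\nabla t_h,\nabla u_h))=0$, the $\mathbb T_c$-coercivity, and the interpolation bound \eqref{i1}. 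Your phrasing of the last step as a best-approximation estimate over discrete gradients is only a cosmetic repackaging of the paper's direct substitution of the same comparison function.
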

\begin{proof}
Introduce the auxiliary problem: Find $(\bm{w}^0,\bm{v}^0)\in  \boldsymbol{\mathcal{H}} $ and $ (\rho,\varrho)\in\mathcal Q  $ such that
\begin{subequations}\label{3.7}
\begin{align}
 &(\curl\bm{w}^0,\curl{\bm w}')+(\curl\bm{v}^0,\curl{\bm v}') +b((\rho, \varrho),({\bm{w}'},{\bm{v}'} ))\nonumber\\
 &\quad= (\curl\bm{w}'_h,\curl{\bm w}')+(\curl\bm{v}'_h,\curl{\bm v}'),\quad\forall (\bm{w}',\bm{v}')\in\boldsymbol{\mathcal{H}},\\
&b((\rho', \varrho'),(\bm{w}^0,\bm{v}^0 ))=0,\quad\forall (\rho',\varrho')\in\mathcal Q.\label{3.7b}
\end{align}
\end{subequations}
The above equation is well-posed. In fact, choosing $(\bm{w}',\bm{v}')$ as $(\nabla p',\nabla q')$ with $(p',q')\in\mathcal Q$ and using the $\mathbb T$-coercivity property (\ref{tc2}) we get $\rho=\varrho=0$; then there are $t,u\in \mathcal Q$ such that $\bm{w}^0=\bm{w}'_h+\nabla t$ and
$\bm{v}^0=\bm{v}'_h+\nabla u$,
 which together with  \eqref{3.7b} yields
\begin{align}
c((\nabla \rho', \nabla\varrho'),(\nabla t,\nabla u ))=-b((\rho', \varrho'),(\bm{w}'_h, \bm{v}'_h )),\quad\forall (\rho',\varrho')\in\mathcal Q
\end{align}
Using again the $\mathbb T$-coercivity property (\ref{tc2}), $t$ and $u$ are uniquely solvable.

According to the imbedding relation \eqref{subset}, $(\bm{w}^0,\bm{v}^0)\in\boldsymbol{\mathcal{H}}_0$ implies that  $\bm{w}^0,\bm{v}^0\in \mathbf H^r(\Omega)$, which together with Lemma \ref{lemmap1}, Corollary \ref{Corollary 2.1}  and
 $\|\curl\bm{v}^0\|+\|\curl\bm{w}^0\| = \|\curl\bm{v}'_h\|+\|\curl\bm{w}'_h\|$ yields \eqref{p1}.  According to Lemma 5.40 in \cite{monk1}, then  $\curl\bm r_h\bm{w}^0=\curl\bm{w}'_h$ and $\curl\bm r_h\bm{v}^0=\curl\bm{v}'_h$. Noting that $\bm\nu\times(\bm{w}'_h-\bm{v}'_h)=\bm\nu\times(\bm r_h\bm{w}^0-\bm{w}^0)=0$ on $\partial\Omega$,  there is $t_h,u_h\in\mathcal Q_h$ such that $\bm r_h\bm{w}^0=\bm{w}'_h+\nabla t_h$ and $ \bm r_h\bm{v}^0= \bm{v}'_h+\nabla u_h$.
Then
\begin{align*}
 c((\bm{w}^0-\bm{w}'_h,\bm{v}^0-\bm{v}'_h),\mathbb T_c(r_h\bm{w}^0-\bm{w}'_h,r_h\bm{v}^0-\bm{v}'_h))=0
\end{align*}
which together with \eqref{i1} yields that
\begin{align*}
&|c((\bm{w}^0-\bm{w}'_h,\bm{v}^0-\bm{v}_h'),\mathbb T_c(\bm{w}^0-\bm{w}'_h,\bm{v}^0-\bm{v}_h'))|\\
&\quad=|c((\bm{w}^0-\bm{w}'_h,\bm{v}^0-\bm{v}'_h),\mathbb T_c(\bm{w}^0-r_h\bm{w}^0,{\bm v}^0-r_h\bm{v}^0))|\\
&\quad\lesssim (\|\bm{w}^0-\bm{w}'_h\|+\|\bm{v}^0-\bm{v}'_h\|)h^{r} (\|\bm{w}^0\|_r+ \|\curl {\bm{w}}'_h \|+\|{\bm { v}^0}\|_{r}+\|\curl{\bm { v}}'_h\|_{}).
\end{align*}
Then
\begin{align*}
\|\bm{w}^0-\bm{w}'_h\|+\|\bm{v}^0-\bm{v}'_h\|\lesssim h^{r} (\|\bm{w}^0\|_r+ \|\curl {\bm{w}}'_h \|+\|{\bm { v}}^0\|_{r}+\|\curl{\bm { v}}'_h\|_{}).
\end{align*}
This together with \eqref{p1} yields \eqref{p2}.
\end{proof}

\begin{corollary}[Discrete Poinc{a}r\'e inequality]For any $({\bm w}_{}',{\bm v}_{}')\in \bm{\mathcal{H}}_{0,h}$ it holds
\begin{align}\label{dpi}
    \|({\bm w}_{}',{\bm v}_{}')\|_{}\lesssim \|(\curl\bm w_{}',\curl\bm v_{}')\|_{}.
  \end{align}
\end{corollary}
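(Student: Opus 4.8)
The plan is to deduce the discrete inequality \eqref{dpi} from its continuous counterpart by transporting an element of $\bm{\mathcal{H}}_{0,h}$ to a nearby field in the continuous space $\bm{\mathcal{H}}_{0}$, using the Hodge-type mapping already constructed in Lemma \ref{Lemma 3.2}. Concretely, given $({\bm w}_{}',{\bm v}_{}')\in \bm{\mathcal{H}}_{0,h}$, I would apply Lemma \ref{Lemma 3.2} with $(\bm{w}'_h,\bm{v}'_h):=({\bm w}_{}',{\bm v}_{}')$ to obtain $(\bm{w}^0,\bm{v}^0)\in \bm{\mathcal{H}}_{0}$ satisfying $\curl\bm{w}^0=\curl{\bm w}_{}'$ and $\curl\bm{v}^0=\curl{\bm v}_{}'$, together with the regularity bound \eqref{p1} and the approximation bound \eqref{p2}.

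Then I would split by the triangle inequality
\[
\|({\bm w}_{}',{\bm v}_{}')\|\le \|(\bm{w}^0,\bm{v}^0)\|+\|(\bm{w}^0-{\bm w}_{}',\bm{v}^0-{\bm v}_{}')\|.
\]
For the first term, since $r>0$ the $\mathbf L^2(\Omega)$-norm is dominated by the $\mathbf H^r(\Omega)$-norm, so \eqref{p1} together with $\curl\bm{w}^0=\curl{\bm w}_{}'$ and $\curl\bm{v}^0=\curl{\bm v}_{}'$ yields $\|(\bm{w}^0,\bm{v}^0)\|\lesssim \|(\curl\bm w_{}',\curl\bm v_{}')\|$. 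For the second term, \eqref{p2} gives $\|\bm{w}^0-{\bm w}_{}'\|+\|\bm{v}^0-{\bm v}_{}'\|\lesssim h^{r}\|(\curl\bm w_{}',\curl\bm v_{}')\|$; since $h^{r}$ is bounded uniformly over the mesh family, this term is also $\lesssim \|(\curl\bm w_{}',\curl\bm v_{}')\|$. Adding the two estimates produces \eqref{dpi} with a constant independent of $h$.

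The heavy lifting has already been carried out in Lemma \ref{Lemma 3.2}, so I expect no genuine obstacle here; the remaining points are bookkeeping ones. First, the argument inherits the structural hypothesis $\bm N=n\bm I$ with $n\in W^{1,\infty}(\Omega)$ under which Lemma \ref{Lemma 3.2} is valid, and one should state this explicitly or note that it is standing throughout the section. Second, one must make sure the hidden constants in \eqref{p1}–\eqref{p2}, as well as the uniform bound on $h^{r}$, are genuinely independent of $h$, so that the result is a bona fide discrete Poincar\'e inequality rather than an $h$-dependent estimate. As an alternative to invoking \eqref{p1}, one could bound $\|(\bm{w}^0,\bm{v}^0)\|$ directly through the continuous Poincar\'e inequality of Corollary \ref{Corollary 2.1} applied to $(\bm{w}^0,\bm{v}^0)\in\bm{\mathcal{H}}_{0}$; this gives the same conclusion and makes transparent that the discrete inequality is just the continuous one perturbed by a term of order $h^{r}$.
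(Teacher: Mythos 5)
Your argument correctly handles the special case $\bm N=n\bm I$ with $n\in W^{1,\infty}(\Omega)$, and there it coincides with the first half of the paper's proof: invoke Lemma \ref{Lemma 3.2}, use the triangle inequality, bound $\|(\bm w^0,\bm v^0)\|$ via \eqref{p1} (or the continuous Poincar\'e inequality of Corollary \ref{Corollary 2.1}), and absorb the $O(h^r)$ perturbation from \eqref{p2}. But the corollary is stated, and later used (e.g.\ in the proof of Theorem \ref{theorem3.1}), for the general anisotropic Hermitian $\bm N$ satisfying \eqref{cond-A-n}; the scalar hypothesis is \emph{not} a standing assumption of the section --- it is a hypothesis of Lemma \ref{Lemma 3.2} only, precisely because the Hodge-mapping construction there needs $\div(n\bm w')\in \mathbf L^2$ to yield $\mathbf H^r$ regularity. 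Treating it as standing, as you propose, leaves the anisotropic case (the main point of the paper) unproved.

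The missing step is the reduction of the general case to the scalar one. The paper does this exactly as in Corollary \ref{Corollary 2.1}: let $\bm{\mathcal{H}}_{0,h}^S$ denote the discrete divergence-free space for the reference coefficient $\bm N=2\bm I$, and use the discrete direct sum decomposition \eqref{dsum} for that reference problem to write $(\bm w',\bm v')=(\bm w^S,\bm v^S)+(\nabla p_h,\nabla q_h)$ with $(\bm w^S,\bm v^S)\in\bm{\mathcal{H}}_{0,h}^S$ and $(p_h,q_h)\in\mathcal Q_h$. Since $(\bm w',\bm v')\in\bm{\mathcal{H}}_{0,h}$ for the actual $\bm N$, one has $c((\bm w',\bm v'),(\nabla p',\nabla q'))=0$ for all $(p',q')\in\mathcal Q_h$, and since $\mathbb T_c$ maps discrete gradient pairs to discrete gradient pairs, the $\mathbb T$-coercivity \eqref{tc2} gives
\begin{align*}
\|(\bm w',\bm v')\|^2\lesssim |c((\bm w',\bm v'),\mathbb T_c(\bm w',\bm v'))|=|c((\bm w',\bm v'),\mathbb T_c(\bm w^S,\bm v^S))|\lesssim \|(\bm w',\bm v')\|\,\|(\bm w^S,\bm v^S)\|,
\end{align*}
whence $\|(\bm w',\bm v')\|\lesssim\|(\bm w^S,\bm v^S)\|\lesssim\|(\curl\bm w^S,\curl\bm v^S)\|=\|(\curl\bm w',\curl\bm v')\|$, the middle inequality being your special-case result applied to $\bm{\mathcal{H}}_{0,h}^S$ and the last equality holding because the gradient part is curl-free. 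Without this reduction your proof establishes a strictly weaker statement than the one claimed.
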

\begin{proof}
According to Lemma \ref{Lemma 3.2}, the estimate \eqref{dpi} is valid when  $\bm N=n\bm I$ with $n\in   W^{1,\infty}(\Omega)$. The conclusion can be proved using the similar argument as in Corollary \ref{Corollary 2.1} and the decomposition \eqref{dsum}.
\end{proof}
In what follows, we represent $\bm{\mathcal{H}}_{0,h}^S$ and  $\bm{\mathcal{H}}_{0}^S$ as  the special $\bm{\mathcal{H}}_{0,h}$ and $\bm{\mathcal{H}}_{0}$ in the case of   $\bm N=2\bm I$, respectively.
Let ${\mathcal{M}}$ be a sequence of the mesh size  $h_i$ with $h_i$ converging to 0 as $i\to0$.
\begin{lemma}(Discrete compactness property) \label{dcp}  Any sequence $\{({\bm w}'_{h},\bm v'_{h})\}_{h\in\mathcal{M}}$ with $({\bm w}'_{h},\bm v'_{h})\in \bm{\mathcal{H}}_{0,h}$ that is uniformly bounded w.r.t $\|\cdot\|_{\bm{\mathcal{H}}}$ contains a subsequence that converges strongly in $\mathbf L^2(\Omega)\times \mathbf L^2(\Omega)$.
\end{lemma}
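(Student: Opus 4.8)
The plan is to reduce the statement to the special configuration $\bm N=2\bm I$, where the Hodge-type companion of Lemma \ref{Lemma 3.2} (and the $\mathbf H^r$-regularity it delivers) is available, and then to transfer the conclusion to an arbitrary admissible $\bm N$ through the discrete decomposition \eqref{dsum} — the same two-step pattern already used for the discrete Poincar\'e inequality \eqref{dpi}. The regularity \eqref{subset} underlying Lemma \ref{Lemma 3.2} only holds for $\bm N=n\bm I$, which is precisely why the detour through the special space is forced.

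First I would settle the special case. Let $\{(\bm w^S_h,\bm v^S_h)\}_{h\in\mathcal M}\subset\bm{\mathcal{H}}_{0,h}^S$ be bounded in $\|\cdot\|_{\bm{\mathcal{H}}}$. Applying Lemma \ref{Lemma 3.2} with $n=2$ produces companions $(\bm w^0_h,\bm v^0_h)\in\bm{\mathcal{H}}_0^S$ with $\curl\bm w^0_h=\curl\bm w^S_h$ and $\curl\bm v^0_h=\curl\bm v^S_h$. Estimate \eqref{p1} then bounds $\|(\bm w^0_h,\bm v^0_h)\|_{r,\Omega}$ by the (uniformly bounded) curls, so $\{(\bm w^0_h,\bm v^0_h)\}$ is bounded in $\mathbf H^r(\Omega)\times\mathbf H^r(\Omega)$; since $r>1/2$, the Rellich--Kondrachov compact embedding yields a subsequence converging strongly in $\mathbf L^2(\Omega)\times\mathbf L^2(\Omega)$. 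Estimate \eqref{p2} gives $\|(\bm w^0_h-\bm w^S_h,\bm v^0_h-\bm v^S_h)\|\lesssim h^r(\|\curl\bm w^S_h\|+\|\curl\bm v^S_h\|)\to0$, so along the same subsequence $(\bm w^S_h,\bm v^S_h)$ converges strongly in $\mathbf L^2(\Omega)\times\mathbf L^2(\Omega)$ to the same limit. This is the heart of the argument.

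For general $\bm N$, take $\{(\bm w'_h,\bm v'_h)\}\subset\bm{\mathcal{H}}_{0,h}$ bounded in $\|\cdot\|_{\bm{\mathcal{H}}}$ and split it through the special decomposition \eqref{dsum}, writing $(\bm w'_h,\bm v'_h)=(\bm w^S_h,\bm v^S_h)+(\nabla p_h,\nabla q_h)$ with $(\bm w^S_h,\bm v^S_h)\in\bm{\mathcal{H}}_{0,h}^S$ and $(p_h,q_h)\in\mathcal Q_h$. Since the gradient part is curl-free, the curls of the two summands coincide with those of $(\bm w'_h,\bm v'_h)$, and the discrete Poincar\'e inequality \eqref{dpi} in $\bm{\mathcal{H}}_{0,h}^S$ then shows $(\bm w^S_h,\bm v^S_h)$ is bounded in $\|\cdot\|_{\bm{\mathcal{H}}}$; the special case just proved extracts a subsequence along which it converges strongly in $\mathbf L^2$. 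It remains to show the gradient part converges as well. Using $(\bm w'_h,\bm v'_h)\in\bm{\mathcal{H}}_{0,h}$, i.e. $c((\nabla p',\nabla q'),(\bm w'_h,\bm v'_h))=0$ for all $(p',q')\in\mathcal Q_h$, together with the decomposition and the symmetry \eqref{sp}, one checks that $(\nabla p_h,\nabla q_h)$ solves
\[
c((\nabla p_h,\nabla q_h),(\nabla p',\nabla q'))=-c((\bm w^S_h,\bm v^S_h),(\nabla p',\nabla q')),\qquad\forall(p',q')\in\mathcal Q_h,
\]
which is exactly \eqref{Problem_Weak2d} with data $(\bm f,\bm g)=-(\bm w^S_h,\bm v^S_h)$. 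Writing $G^h$ and $G$ for the discrete and continuous solution operators of \eqref{Problem_Weak2d} and \eqref{Problem_Weak2} — uniformly stable by the $\mathbb T$-coercivity \eqref{tc2}, since $\mathbb T_c$ preserves $\mathcal Q_h$-gradients — we have $(\nabla p_h,\nabla q_h)=G^h(-\bm w^S_h,-\bm v^S_h)$, and
\[
\|G^h(-\bm w^S_h,-\bm v^S_h)-G(-\bm w^S,-\bm v^S)\|\le\|G^h\|\,\|(\bm w^S_h-\bm w^S,\bm v^S_h-\bm v^S)\|+\|(G^h-G)(-\bm w^S,-\bm v^S)\|
\]
forces strong $\mathbf L^2$ convergence of $(\nabla p_h,\nabla q_h)$: the first term vanishes because $\sup_h\|G^h\|<\infty$ and the data converge strongly, and the second because $G^h\to G$ pointwise (quasi-optimality of \eqref{Problem_Weak2d} plus density of $\bigcup_h\mathcal Q_h$ in $\mathcal Q$, cf. Lemma \ref{lemmadense}). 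Summing the two convergent summands gives the claim.

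\textbf{The main obstacle.} The delicate point is precisely this last transfer for anisotropic $\bm N$. Rellich applied to the curl-free potentials $p_h,q_h$ only yields strong $L^2$ convergence of $p_h,q_h$ and weak convergence of their gradients, which is insufficient; strong $\mathbf L^2$ convergence of $(\nabla p_h,\nabla q_h)$ must be extracted from the variational problem \eqref{Problem_Weak2d}, and it is here that the uniform discrete stability granted by \eqref{tc2} and the consistency $G^h\to G$ are indispensable. By contrast, the special case itself is comparatively routine once Lemma \ref{Lemma 3.2} and the compact embedding are in hand.
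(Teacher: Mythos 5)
Your proposal is correct and follows essentially the same route as the paper: reduction to the special case $\bm N=2\bm I$ via Lemma \ref{Lemma 3.2} and the compact embedding of $\bm{\mathcal{H}}_0^S$, followed by a transfer to general $\bm N$ through the decomposition of $\bm{\mathcal{H}}_{0,h}$ into $\bm{\mathcal{H}}_{0,h}^S$ plus discrete gradients. The only organizational difference is the final step: the paper identifies the limit through \eqref{sum1} and bounds the full error directly by the $\mathbb T_c$-coercivity computation \eqref{cc4}--\eqref{cc5} (which reduces to the special-part error plus a best-approximation term for the gradient potential), whereas you prove strong convergence of the gradient part separately by viewing it as the image of $-(\bm w^S_h,\bm v^S_h)$ under the uniformly stable, pointwise-convergent discrete solution operators of \eqref{Problem_Weak2d} — the two arguments are equivalent in substance.
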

\begin{proof}
  Let $\{(\bm w'_{h  },\bm v'_{h  })\}_{h\in\mathcal{M}}\subset \bm{\mathcal{H}}_{0,h}^S$  with $\|(\bm w'_{h  },\bm v'_{h  })\|_{\bm{\mathcal{H}}}<M$ for a positive constant $M$. It is trivial to  assume that the sequence $h_i\in\mathcal{M}$ satisfies $h_i\rightarrow0$    as $i\to\infty$. According to Lemma \ref{Lemma 3.2}, there is $({\bm w}_{h_i}^0,{\bm v}_{h_i}^0)\in\bm{\mathcal{H}}^S_{0}$ such that $\|({\bm w}_{h_i}^0- {\bm w}'_{h_i},{\bm v}_{h_i}^0- {\bm v}'_{h_i})\|\to 0$ as $i\to\infty$.
  Note that  from \eqref{p1}  we deduce
  \begin{align*}
    \|({\bm w}_{h_i}^0,{\bm v}_{h_i}^0)\|_{\bm{\mathcal{H}}}\lesssim \|(\bm w'_{h_i},\bm v'_{h_i})\|_{\bm{\mathcal{H}}}.
  \end{align*}
This means that  $\{({\bm w}_{h_i}^0,{\bm v}_{h_i}^0)\}$ is uniformly bounded in $\bm{\mathcal{H}}^S_{0}$. Since $\bm{\mathcal{H}}^S_{0}$ is compactly imbedded into $\mathbf L^2(\Omega)\times \mathbf L^2(\Omega)$, there is a subsequence of $\{({\bm w}_{h_i}^0,{\bm v}_{h_i}^0)\}$ converging to some $(\bm w_0,\bm v_0)$ in $\mathbf L^2(\Omega)\times \mathbf L^2(\Omega)$. Hence   a subsequence of $\{({\bm w}'_{h_i},{\bm v}'_{h_i})\}$ will converge to  $({\bm w}_0,\bm v_0)$ in $\mathbf L^2(\Omega)\times \mathbf L^2(\Omega)$ as well. At this time we have proved the discrete compactness property of $ \bm{\mathcal{H}}_{0,h}^S$.

 Next we shall prove the discrete compactness property of $ \bm{\mathcal{H}}_{0,h}$.   Let $\{(\bm w'_{h_i  },\bm v'_{h_i  })\} \subset \bm{\mathcal{H}}_{0,h}$  with $\|(\bm w'_{h_i  },\bm v'_{h_i  })\|_{\bm{\mathcal{H}}}<M$ for a positive constant $M$.
 According to the decomposition \eqref{dsum},
  $(\bm w'_{h_i  },\bm v'_{h_i  })=(\bm w_{h_i  }^S,\bm v_{h_i  }^S)+(\nabla p^S_{h_i},\nabla q^S_{h_i})$ with $(\bm w_{h_i  }^S,\bm v_{h_i  }^S)\in \bm {\mathcal H}_{0,h}^S$ and $( p^S_{h_i}, q^S_{h_i})\in \mathcal Q_{h_i}$. 
     Let $(\bm w_{h_i  }^S,\bm v_{h_i  }^S)$ converge strongly to  $({\bm w}^S,\bm v^S)$ in  $\mathbf L^2(\Omega)\times \mathbf L^2(\Omega)$. 
       According to the decomposition \eqref{sum1},
  $(\bm w^S,\bm v^S)=(\bm w^0,\bm v^0)+(\nabla p^0,\nabla q^0)$ with $(\bm w^0,\bm v^0)\in \bm {\mathcal L}_{0}$ and $( p^0, q^0)\in \mathcal Q_{}$. 
Using the fact that $$c((\bm w'_{h_i  },\bm v'_{h_i  }),(\nabla p'_{h_i},\nabla q'_{h_i})=c((\bm w^0_{},\bm v^0_{}),(\nabla p'_{h_i},\nabla q'_{h_i})=0,~\forall ( p'_{h_i},  q'_{h_i})\in \mathcal Q_{h_i},$$
    we have
       \begin{align}\label{cc4}
   &\quad c((\bm w^0-\bm w'_{h_i  },\bm v^0-\bm v'_{h_i  }),\mathbb T_c(\bm w^0-\bm w'_{h_i  },\bm v^0-\bm v'_{h_i  }))\nonumber\\
   &=c((\bm w^0-\bm w'_{h_i  },\bm v^0-\bm v'_{h_i  }),\mathbb T_c(\bm w^0-\bm w_{h_i  }^S,\bm v^0-\bm v_{h_i  }^S)-\mathbb T_c(\nabla p^S_{h_i}, \nabla q^S_{h_i}))\nonumber\\
&=c((\bm w^0-\bm w'_{h_i  },\bm v^0-\bm v'_{h_i  }),\mathbb T_c(\bm w^S-\bm w_{h_i  }^S,\bm v^S-\bm v_{h_i  }^S)+\mathbb T_c(\nabla (p'_{h_i}-p^0), \nabla (q'_{h_i}-q^0))),\nonumber\\
&\quad\forall ( p'_{h_i},  q'_{h_i})\in \mathcal Q_{h_i}.
    \end{align}
This gives
 \begin{align}\label{cc5}
   &\|(\bm w^0-\bm w'_{h_i  },\bm v^0-\bm v'_{h_i  })\|
   \lesssim\|(\bm w^S-\bm w_{h_i  }^S,\bm v^S-\bm v_{h_i  }^S)\|+\|(\nabla (p'_{h_i}-p^0), \nabla (q'_{h_i}-q^0))\|,\nonumber\\
&\quad\forall ( p'_{h_i},  q'_{h_i})\in \mathcal Q_{h_i}.
    \end{align}
This together with Lemma \ref{lemmadense} infers that   $(\bm w'_{h_i  },\bm v'_{h_i  })$   converges strongly to $(\bm w^0,\bm v^0)$ in $\mathbf L^2(\Omega)\times \mathbf L^2(\Omega)$.
\end{proof}

Next we shall establish the error bound of the finite element discretization \eqref{Problem_Weakdd} for approximating \eqref{ProblemWeak} as follows.
\begin{theorem}\label{theorem3.1}The following error estimate holds:
\begin{align}\label{pri}
\|(\bm w_h-\bm w,\bm v_h-\bm v)\|_{\bm{\mathcal{H}}}\lesssim& \inf_{(p',q')\in \mathcal Q_h}\|(p'-p,q'-q)\|_\mathcal Q+\inf_{(\bm w',\bm v')\in \bm{\mathcal{H}}_h}\|(\bm w-\bm w',\bm v-\bm v'))\|_{\bm{\mathcal{H}}}.
\end{align}
\begin{proof} We define $( \widetilde{\bm w}_h,\widetilde{\bm v}_h)\in \bm{\mathcal{H}}_h$ such that
 \begin{align}
\tilde a((\bm w-\widetilde{\bm w}_h,\bm v-\widetilde{\bm v}_h),(\bm w',\bm v'))=0,~\forall (\bm w',\bm v')\in \bm{\mathcal{H}}_h.
\end{align}
where
 \begin{align}
\tilde a((\bm w,\bm v),(\bm w',\bm v'))&:=\big(\bm N\curl \bm w,\curl \bm w'\big)+\big(\bm N\bm w,\bm w'\big)
-\big(\curl \bm v,\curl \bm v'\big)-\big(\bm v,\bm v'\big).
\end{align}
Using the similar argument as in Lemma 2.1, we see that
 \begin{align}
|\tilde a((\bm w',\bm v'),\mathbb T_c(\bm w',\bm v'))|\gtrsim \|(\bm w',\bm v')\|_{\bm{\mathcal{H}}}, ~\forall(\bm w',\bm v')\in\bm{\mathcal{H}}.
\end{align}
Therefore such  $(\widetilde{\bm w}_h,\widetilde{\bm v}_h)$ is well defined. It is easy to check that $(\widetilde{\bm w}_h,\widetilde{\bm v}_h)\in \bm{\mathcal{H}}_{0,h}$ and
 \begin{align}\label{proj}
\|(\bm w-\widetilde{\bm w}_h,\bm v-\widetilde{\bm v}_h)\|_{\bm{\mathcal{H}}_{}}\lesssim \inf_{(\bm w',\bm v')\in \bm{\mathcal{H}}_h}\|(\bm w-\bm w',\bm v-\bm v'))\|_{\bm{\mathcal{H}}}.
\end{align}
Choose  arbitrary $(\bm w',\bm v') \in \bm{\mathcal{H}}_h.$
The combination of \eqref{ProblemWeak} and \eqref{Problem_Weakdd} gives
\begin{align}
a((\bm w_h-\bm w,\bm v_h-\bm v),(\bm w',\bm v'))=  -c((\nabla (p_h-p),\nabla (q_h-q)),(\bm w',\bm v')).
\end{align}
Recalling the $\mathbb T$-coercivity \eqref{tc1} of $a(\cdot,\cdot)$, we reach
\begin{align}
&\quad\|(\curl(\bm w_h-\bm w),\curl(\bm v_h-\bm v))\|_{}^2\lesssim |a((\bm w_h-\bm w,\bm v_h-\bm v),\mathbb T_a(\bm w_h-\bm w,\bm v_h-\bm v))|\nonumber\\
&= |a((\bm w_h-\bm w,\bm v_h-\bm v),\mathbb T_a(\widetilde{\bm w}_h-\bm w,\widetilde{\bm v}_h-\bm v))-c((\nabla (p_h-p),\nabla (q_h-q)),\mathbb T_a(\bm w_h-\widetilde{\bm w}_h,\bm v_h-\widetilde{\bm v}_h))|.\nonumber
\end{align}
 Recalling the discrete Poincar\'e inequality \eqref{dpi}, this leads to
\begin{align}
&\quad\|(\curl(\bm w_h-\bm w),\curl(\bm v_h-\bm v))\|_{}^2\nonumber\\
&\lesssim  \|(\curl(\bm w_h-\bm w),\curl(\bm v_h-\bm v))\|_{}\|\curl(\widetilde{\bm w}_h-\bm w),\curl(\widetilde{\bm v}_h-\bm v)\|_{}\nonumber\\
&+\|(p_h-p,q_h-q)\|_\mathcal Q\|(\curl(\bm w_h-\bm w+\bm w-\widetilde{\bm w}_h),\curl(\bm v_h-\bm v+\bm v-\widetilde{\bm v}_h))\|.
\end{align}
By the Young's equality   we get
\begin{align}\label{w1}
\|(\curl(\bm w_h-\bm w),\curl(\bm v_h-\bm v))\|\lesssim \|(p_h-p,q_h-q)\|_\mathcal Q+\|(\bm w-\widetilde{\bm w}_h,\bm v-\widetilde{\bm v}_h))\|_{\bm{\mathcal{H}}},
\end{align}
The combination of \eqref{Problem_Weak2}, \eqref{Problem_Weak2d}  and the $\mathbb T$-coercivity \eqref{tc2} of $c(\cdot,\cdot)$ yields
\begin{align}\label{w2}
\|(p_h-p,q_h-q)\|_\mathcal Q\lesssim \inf_{(p',q')\in\mathcal Q}\|(p'-p,q'-q)\|_\mathcal Q.
\end{align}
It can be inferred from the estimates \eqref{w1}, \eqref{w2} and \eqref{proj}
\begin{align}\label{pri0}
\|(\curl(\bm w_h-\bm w),\curl(\bm v_h-\bm v))\|\lesssim& \inf_{(p',q')\in \mathcal Q_h}\|(p'-p,q'-q)\|_\mathcal Q+\inf_{(\bm w',\bm v')\in \bm{\mathcal{H}}_h}\|(\bm w-\bm w',\bm v-\bm v'))\|_{\bm{\mathcal{H}}}.
\end{align}
The triangular inequality and   the discrete Poincar\'e inequality \eqref{dpi} implies
\begin{align*}
\|(\bm w_h-\bm w,\bm v_h-\bm v)\|_{}&\lesssim\|(\widetilde{\bm w}_h-\bm w,\widetilde{\bm v}_h-\bm v)\|_{}+\|(\curl(\bm w_h-\widetilde{\bm w}_h),\curl(\bm v_h-\widetilde{\bm v}_h))\|_{}\\
&\lesssim\|(\widetilde{\bm w}_h-\bm w,\widetilde{\bm v}_h-\bm v)\|_{\bm{\mathcal{H}}}+\|(\curl(\bm w_h-\bm w),\curl(\bm v_h-\bm v))\|_{}.
\end{align*}
The combination of the  above  two estimates and \eqref{proj}  gives the conclusion \eqref{pri}.
\end{proof}
\end{theorem}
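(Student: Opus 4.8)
The plan is to prove a C\'ea-type quasi-optimality estimate by decoupling the field error from the multiplier error, exploiting the two direct-sum decompositions \eqref{sum} and \eqref{dsum} together with the two $\mathbb{T}$-coercivity properties. First I would introduce an auxiliary Galerkin projection $(\widetilde{\bm w}_h,\widetilde{\bm v}_h)\in\bm{\mathcal{H}}_h$ of the exact field $(\bm w,\bm v)$, defined through an auxiliary sesquilinear form that is $\mathbb{T}$-coercive over the full $\bm{\mathcal{H}}$-norm (not merely the curl seminorm), for instance
\[
\tilde a((\bm w,\bm v),(\bm w',\bm v')):=(\bm N\curl\bm w,\curl\bm w')+(\bm N\bm w,\bm w')-(\curl\bm v,\curl\bm v')-(\bm v,\bm v').
\]
Arguing exactly as in Lemma~\ref{lemma2.1} gives $|\tilde a((\bm w',\bm v'),\mathbb{T}_c(\bm w',\bm v'))|\gtrsim\|(\bm w',\bm v')\|_{\bm{\mathcal{H}}}$, so this projection is well defined and satisfies the quasi-optimal bound $\|(\bm w-\widetilde{\bm w}_h,\bm v-\widetilde{\bm v}_h)\|_{\bm{\mathcal{H}}}\lesssim\inf_{(\bm w',\bm v')\in\bm{\mathcal{H}}_h}\|(\bm w-\bm w',\bm v-\bm v')\|_{\bm{\mathcal{H}}}$. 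A short check using the block structure of $\tilde a$ shows that the projection inherits the discrete divergence-free constraint, i.e.\ $(\widetilde{\bm w}_h,\widetilde{\bm v}_h)\in\bm{\mathcal{H}}_{0,h}$, which is what will later permit invoking the discrete Poincar\'e inequality on the purely discrete part of the error.

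Second, I would derive the error equation. Subtracting the discrete system \eqref{Problem_Weakdd} from the continuous one \eqref{ProblemWeak} and testing against an arbitrary $(\bm w',\bm v')\in\bm{\mathcal{H}}_h$, the data term $c((\bm f,\bm g),\cdot)$ cancels and I am left with
\[
a((\bm w_h-\bm w,\bm v_h-\bm v),(\bm w',\bm v'))=-c((\nabla(p_h-p),\nabla(q_h-q)),(\bm w',\bm v')).
\]
This isolates the multiplier error as the only source term. Since $(p,q)$ and $(p_h,q_h)$ solve the decoupled gradient problems \eqref{Problem_Weak2} and \eqref{Problem_Weak2d}, which are $\mathbb{T}$-coercive by \eqref{tc2}, a standard C\'ea argument immediately yields $\|(p_h-p,q_h-q)\|_{\mathcal{Q}}\lesssim\inf_{(p',q')\in\mathcal{Q}_h}\|(p'-p,q'-q)\|_{\mathcal{Q}}$, so the multiplier contribution is already controlled by the desired right-hand side.

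Third, I would estimate the curl of the field error. Testing the error equation with $(\bm w',\bm v')=\mathbb{T}_a(\bm w_h-\widetilde{\bm w}_h,\bm v_h-\widetilde{\bm v}_h)\in\bm{\mathcal{H}}_{0,h}$ and invoking the $\mathbb{T}$-coercivity \eqref{tc1} of $a$, I bound $\|(\curl(\bm w_h-\bm w),\curl(\bm v_h-\bm v))\|^2$ from above by cross terms involving the projection error $\|(\bm w-\widetilde{\bm w}_h,\bm v-\widetilde{\bm v}_h)\|_{\bm{\mathcal{H}}}$ and the already-bounded multiplier error. The crucial point is that since $(\bm w_h-\widetilde{\bm w}_h,\bm v_h-\widetilde{\bm v}_h)$ lies in $\bm{\mathcal{H}}_{0,h}$, the discrete Poincar\'e inequality \eqref{dpi} converts its $L^2$ norm into a curl norm, after which Young's inequality absorbs the quadratic factor to produce
\[
\|(\curl(\bm w_h-\bm w),\curl(\bm v_h-\bm v))\|\lesssim\|(p_h-p,q_h-q)\|_{\mathcal{Q}}+\|(\bm w-\widetilde{\bm w}_h,\bm v-\widetilde{\bm v}_h)\|_{\bm{\mathcal{H}}}.
\]

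Finally, the full $\bm{\mathcal{H}}$-norm of the error follows from the triangle inequality splitting $(\bm w_h-\bm w,\bm v_h-\bm v)$ into $(\bm w_h-\widetilde{\bm w}_h,\bm v_h-\widetilde{\bm v}_h)$ and $(\widetilde{\bm w}_h-\bm w,\widetilde{\bm v}_h-\bm v)$, applying \eqref{dpi} once more to the discrete part, and collecting the bounds from the previous two steps together with the projection estimate. I expect the main obstacle to be the third step: the $\mathbb{T}$-coercivity \eqref{tc1} controls only the curl seminorm, so recovering the full norm is impossible without the discrete Poincar\'e inequality \eqref{dpi}. That inequality rests on the Hodge-type approximation of Lemma~\ref{Lemma 3.2} and the discrete compactness property, and verifying that the intermediate projection genuinely lands in $\bm{\mathcal{H}}_{0,h}$---so that \eqref{dpi} applies to the discrete error---is the delicate structural ingredient that makes the whole argument close.
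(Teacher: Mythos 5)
Your proposal is correct and follows essentially the same route as the paper's own proof: the same auxiliary form $\tilde a$ and its $\mathbb T_c$-coercivity to build a quasi-optimal projection landing in $\bm{\mathcal{H}}_{0,h}$, the same error equation isolating the multiplier term, the same use of \eqref{tc1}, the discrete Poincar\'e inequality and Young's inequality for the curl seminorm, and the same C\'ea bound for $(p_h-p,q_h-q)$ followed by the triangle-inequality assembly. No substantive differences to report.
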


The above  theorem  leads to the well-posedness of the problem \eqref{Problem_Weakdd} and  the stability
\begin{align}\label{stad}
\|(\bm w_h,\bm v_h)\|_{\bm{\mathcal H}}\lesssim \|(\bm f,\bm g)\|.
\end{align}

Next we shall denote $\mathcal{L}(X,Y)$ as the space of the linear functionals bounded from $X$ to $Y$ and simplify $\mathcal{L}(X,X)$ as $\mathcal{L}(X)$.
We introduce the operator $T\in \mathcal{L}(\mathbf L^2(\Omega),\bm{\mathcal{H}}_{})$ defined as
$$T(\bm f,\bm g):=(\bm w,\bm v)$$ from the problem \eqref{ProblemWeak} and the operator $T_h\in \mathcal{L}(\mathbf L^2(\Omega),\bm{\mathcal{H}}_{h})$ defined as
$$T_h(\bm f,\bm g):=(\bm w_h,\bm v_h)$$ from the problem \eqref{Problem_Weakdd}.
 Then the eigenvalue problem \eqref{Problem_Weak} has the operator form
\begin{eqnarray}
T(\bm w,\bm v)=\lambda^{-1} (\bm w,\bm v).
\end{eqnarray}

\begin{lemma}\label{Lemma 3.4}The operator $T$   is compact from  $\mathbf L^2(\Omega)\times \mathbf L^2(\Omega)$ to  $\mathbf L^2(\Omega)\times \mathbf L^2(\Omega)$.
\end{lemma}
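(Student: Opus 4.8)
The plan is to factor $T$ through the space $\bm{\mathcal H}_0$ and reduce the assertion to a compact embedding. Since $T(\bm f,\bm g)=(\bm w,\bm v)$ is by definition the solution of \eqref{ProblemWeak}, it satisfies the constraint $b((p',q'),(\bm w,\bm v))=0$ for all $(p',q')\in\mathcal Q$ and therefore lies in $\bm{\mathcal H}_0$; moreover the stability estimate \eqref{sta} shows that $T$ is bounded from $\mathbf L^2(\Omega)\times\mathbf L^2(\Omega)$ into $\bm{\mathcal H}_0$ equipped with $\|\cdot\|_{\bm{\mathcal H}}$. Hence, if $\{(\bm f_n,\bm g_n)\}$ is bounded in $\mathbf L^2(\Omega)^2$, the images $(\bm w_n,\bm v_n)=T(\bm f_n,\bm g_n)$ form a $\|\cdot\|_{\bm{\mathcal H}}$-bounded sequence in $\bm{\mathcal H}_0$, and it suffices to extract a subsequence converging in $\mathbf L^2(\Omega)^2$. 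In other words, the whole claim follows once $\bm{\mathcal H}_0$ is shown to be compactly embedded in $\mathbf L^2(\Omega)\times\mathbf L^2(\Omega)$; this is the continuous analogue of the discrete compactness of Lemma \ref{dcp}, and I would prove it along the same lines.

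For the special coefficient $\bm N=2\bm I$ the embedding is direct. By \eqref{subset00} together with the Poincar\'e inequality \eqref{po} of Corollary \ref{Corollary 2.1}, every element of $\bm{\mathcal H}_0^S$ belongs to $\mathbf H^r(\Omega)\times\mathbf H^r(\Omega)$ with $r>1/2$ and its $\mathbf H^r$-norm is controlled by $\|\cdot\|_{\bm{\mathcal H}}$; the Rellich--Kondrachov theorem then furnishes the compactness of $\bm{\mathcal H}_0^S\hookrightarrow\mathbf L^2(\Omega)^2$ that was already invoked in Lemma \ref{dcp}.

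For a general anisotropic $\bm N$ I would transfer this compactness to $\bm{\mathcal H}_0$ using the decomposition \eqref{sum} applied to $\bm{\mathcal H}_0^S$, exactly as in Corollary \ref{Corollary 2.1}. Writing each bounded $(\bm w_n,\bm v_n)\in\bm{\mathcal H}_0$ as $(\bm w_n^S,\bm v_n^S)+(\nabla p_n,\nabla q_n)$ with $(\bm w_n^S,\bm v_n^S)\in\bm{\mathcal H}_0^S$, the curl-free gradient part leaves the curls unchanged, so $\|\curl\bm w_n^S\|=\|\curl\bm w_n\|$ and $\|\curl\bm v_n^S\|=\|\curl\bm v_n\|$; combined with the Poincar\'e inequality this makes $\{(\bm w_n^S,\bm v_n^S)\}$ bounded in $\bm{\mathcal H}_0^S$, and the special-case compactness yields an $\mathbf L^2$-convergent subsequence. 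Finally, since $(\bm w_n,\bm v_n)\in\bm{\mathcal H}_0$ is $c$-orthogonal to all gradient pairs and $\mathbb T_c$ maps gradient pairs to gradient pairs, the argument of \eqref{cc1} gives $c((\bm w_n-\bm w_m,\bm v_n-\bm v_m),\mathbb T_c(\bm w_n-\bm w_m,\bm v_n-\bm v_m))=c((\bm w_n-\bm w_m,\bm v_n-\bm v_m),\mathbb T_c(\bm w_n^S-\bm w_m^S,\bm v_n^S-\bm v_m^S))$; the $\mathbb T$-coercivity \eqref{tc2} of $c(\cdot,\cdot)$ then bounds $\|(\bm w_n-\bm w_m,\bm v_n-\bm v_m)\|$ by $\|(\bm w_n^S-\bm w_m^S,\bm v_n^S-\bm v_m^S)\|$, so $\{(\bm w_n,\bm v_n)\}$ is Cauchy and converges in $\mathbf L^2(\Omega)^2$. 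This establishes the compact embedding and hence the compactness of $T$.

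I expect the main difficulty to be the anisotropic case: unlike $\bm N=n\bm I$, a general Hermitian $\bm N$ does not give the divergence control behind the $\mathbf H^r$-regularity \eqref{subset}, so $\bm{\mathcal H}_0$ cannot be embedded directly into a compactly-embedded Sobolev space. The device that circumvents this is the $\mathbb T$-coercivity reduction to the isotropic space $\bm{\mathcal H}_0^S$, and the one point requiring care is to verify that $\mathbb T_c$ preserves the gradient subspace used in \eqref{sum}, so that the $c$-orthogonality defining $\bm{\mathcal H}_0$ can indeed be exploited in the final estimate.
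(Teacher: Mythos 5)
Your proof is correct and follows essentially the same route as the paper: reduce to the compact embedding of $\bm{\mathcal H}_0$ by factoring $T$ through the stability estimate \eqref{sta}, use the $\mathbf H^r$-regularity \eqref{subset00} of the isotropic space $\bm{\mathcal H}_0^S$ for the Rellich step, and transfer to general $\bm N$ via the decomposition \eqref{sum} and the $\mathbb T_c$-coercivity applied to the $c$-orthogonality against gradients. The only cosmetic difference is that you conclude by a Cauchy-sequence estimate whereas the paper identifies the limit explicitly through the second decomposition \eqref{sum1}; both are valid, and your observation that $\mathbb T_c$ preserves the gradient subspace is exactly the point the argument hinges on.
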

\begin{proof}
Let  $\{(\bm f_i,\bm g_i)\}_{i=1}^\infty$  be  a bounded set in $\mathbf L^2(\Omega)\times \mathbf L^2(\Omega)$. Due to the stability condition \eqref{sta}, we see that $\{T(\bm f_i,\bm g_i)\}_{i=1}^\infty$ is a bounded set in $\bm{\mathcal{H}}_{0}$.
From \eqref{subset}, $\bm{\mathcal{H}}_{0}^S$ is compactly imbedded into $\mathbf L^2(\Omega)\times \mathbf L^2(\Omega)$.  Using the  decomposition \eqref{sum}, we have
$T(\bm f_i,\bm g_i)=(\bm{w}^S_i,\bm{v}^S_i)+(\nabla p^S_i,\nabla q^S_i)$  with $(\bm w^S_i,\bm v^S_i)\in \bm {\mathcal H}^S_{0}$ and $( p^S_i, q^S_i)\in \mathcal Q_{}$. 
    There is a subsequence of $\{(\bm{w}^S_i,\bm{v}^S_i)\}_{i=1}^\infty$ still denoted by itself such that
    $(\bm{w}^S_i,\bm{v}^S_i)$ converges strongly to some $\{(\bm{w}^S,\bm{v}^S)$ in $\mathbf L^2(\Omega)$ as $i\to \infty$.
But using the  decomposition \eqref{sum1}     we have  $(\bm{w}^S,\bm{v}^S)=  (\bm{w}^0,\bm{v}^0)+(\nabla p_{}^0,\nabla q_{}^0) $ for some $(\bm w^0,\bm v^0)\in \bm {\mathcal L}_{0}$ and $( p^0, q^0)\in \mathcal Q_{}$.  We now show that $T(\bm f_i,\bm g_i)$ converges to   $(\bm{w}^0,\bm{v}^0)$ in $\mathbf L^2(\Omega)$  as $i\to \infty$.  Using the fact that  $\left\{T(\bm f_i,\bm g_i)\right\}_{i=1}^{\infty} $ and  $(\boldsymbol{w}^0,\boldsymbol{v}^0)$  are in  $\bm{\mathcal{L}}_0$   we have
 $$c(T(\bm f_i,\bm g_i),(\nabla p',\nabla q')=c((\bm w^0,\bm v^0),(\nabla p'_{},\nabla q'_{})=0,~\forall ( p',  q')\in \mathcal Q_{}.$$
It follows that
       \begin{align}\label{cc1}
   &\quad c\left((\bm w^0,\bm v^0)-T(\bm f_i,\bm g_i),\mathbb T_c((\bm w^0,\bm v^0)-T(\bm f_i,\bm g_i))\right)\nonumber\\
&=c\left((\bm w^0,\bm v^0)-T(\bm f_i,\bm g_i),\mathbb T_c((\bm w^S,\bm v^S)-(\bm w_{i  }^S,\bm v_{i  }^S))\right).
    \end{align}
Hence,  $\left\|(\bm w^0,\bm v^0)-T(\bm f_i,\bm g_i)\right\|_{} \lesssim\left\|(\bm w^S-\bm w_{i  }^S,\bm v^S-\bm v_{i  }^S)\right\|_{ } \rightarrow 0$  as  $i\rightarrow 0 $.
\end{proof}

According to the spectral theory of linear compact operators, the  set of eigenvalues of the operator $T$  is  countable. Let $\lambda:=\lambda_m=\cdots=\lambda_{m+\beta-1}$ be the $m$th eigenvalue
of  $T$.  Let $\alpha$ and $\beta$ be the ascent and the  algebraic multiplicity of the eigenvalue $\lambda$ of $T$, respectively. Then $\alpha$ is the smallest positive integer  such that $\dim \Big({\ker}((T-\lambda^{-1})^{\alpha})\Big)=\beta$.

According to \cite{anselone,chatelin,osborn},  one says that the convergence $T_h\rightarrow T$ in $\mathcal L(\mathbf L^2(\Omega))\times \mathbf L^2(\Omega))$ is collectively compact if and only if

\textbf a) $T_h$ converges to $ T$ in $\mathbf L^2(\Omega)\times \mathbf L^2(\Omega)$  pointly, i.e., $\|(T_h-T)(\bm w',\bm v')\|\rightarrow 0,~\forall (\bm w',\bm v')\in \mathbf L^2(\Omega)\times \mathbf L^2(\Omega)$,

\textbf b) The set   $\cup_{h\in\mathcal M}( T- T_h)B$ is a relatively compact set in $\mathbf L^2(\Omega)\times \mathbf L^2(\Omega)$, where $B$ is the unit ball in $\mathbf L^2(\Omega)\times \mathbf L^2(\Omega)$.

%

\begin{theorem} There holds the collectively compact convergence
$T_h\rightarrow T$ in $\mathcal L(\mathbf L^2(\Omega))\times \mathbf L^2(\Omega))$.
\end{theorem}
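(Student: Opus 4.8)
The plan is to check directly the two defining properties \textbf{a)} (pointwise convergence) and \textbf{b)} (relative compactness of $\cup_{h\in\mathcal M}(T-T_h)B$) of collectively compact convergence, assembling the stability bounds \eqref{sta} and \eqref{stad}, the compactness of $T$ (Lemma \ref{Lemma 3.4}) and the discrete compactness property (Lemma \ref{dcp}).

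To establish \textbf{a)}, fix $(\bm f,\bm g)\in\mathbf L^2(\Omega)\times\mathbf L^2(\Omega)$ and set $T(\bm f,\bm g)=(\bm w,\bm v)$, $T_h(\bm f,\bm g)=(\bm w_h,\bm v_h)$, together with their scalar parts $(p,q)$ and $(p_h,q_h)$. By Theorem \ref{theorem3.1} the $\bm{\mathcal H}$-error \eqref{pri} is controlled by the best-approximation errors of $(p,q)$ in $\mathcal Q_h$ and of $(\bm w,\bm v)$ in $\bm{\mathcal H}_h$. I would show each of these infima tends to $0$ as $h\to0$ by combining the density of the smooth fields $\mathbf H$ and $Q$ (Lemma \ref{lemmadense}) with the edge-element interpolation estimates \eqref{i1}--\eqref{i2} (and the analogous $H^1$-interpolation for $\mathcal Q_h$): given $\varepsilon>0$, first pick a smooth approximant within $\varepsilon$ in the graph norm, then interpolate it with an error that vanishes with $h$, and conclude by the triangle inequality. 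Since $\|\cdot\|_{\bm{\mathcal H}}$ dominates the $\mathbf L^2(\Omega)\times\mathbf L^2(\Omega)$-norm, this gives $\|(T_h-T)(\bm f,\bm g)\|\to0$.

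For \textbf{b)}, let $B$ denote the unit ball of $\mathbf L^2(\Omega)\times\mathbf L^2(\Omega)$ and consider an arbitrary sequence $(T-T_{h_n})(\bm f_n,\bm g_n)$ with $(\bm f_n,\bm g_n)\in B$ and $h_n\in\mathcal M$; I must produce an $\mathbf L^2$-convergent subsequence. Because $T$ is compact (Lemma \ref{Lemma 3.4}), a subsequence of $\{T(\bm f_n,\bm g_n)\}$ already converges in $\mathbf L^2(\Omega)\times\mathbf L^2(\Omega)$. Along that subsequence the discrete images $T_{h_n}(\bm f_n,\bm g_n)\in\bm{\mathcal H}_{0,h_n}$ are, by the discrete stability bound \eqref{stad}, uniformly bounded in $\|\cdot\|_{\bm{\mathcal H}}$. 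Passing to a further subsequence, either the $h_n$ are eventually constant---in which case boundedness in a fixed finite-dimensional space yields a convergent subsequence---or $h_n\to0$, in which case the discrete compactness property (Lemma \ref{dcp}) provides a strongly convergent subsequence in $\mathbf L^2(\Omega)\times\mathbf L^2(\Omega)$. Subtracting the two convergent sequences shows that $(T-T_{h_n})(\bm f_n,\bm g_n)$ converges along the common subsequence, hence $\cup_{h\in\mathcal M}(T-T_h)B$ is relatively compact.

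The routine part is \textbf{a)}, which follows once density and interpolation are paired. The crux is \textbf{b)}: the data $(\bm f_n,\bm g_n)$ and the mesh $h_n$ vary simultaneously, so one cannot merely combine a fixed compact embedding with pointwise convergence. The decisive ingredient is therefore Lemma \ref{dcp}, whose force rests on the uniform discrete Poincar\'e inequality \eqref{dpi} and the Hodge-type correction of Lemma \ref{Lemma 3.2} that transfers the mesh-dependent bound in $\bm{\mathcal H}_{0,h_n}$ to a uniform bound in the fixed, compactly embedded space $\bm{\mathcal H}_0^S$. I expect the only care needed is the elementary reduction of the mesh indices $h_n$ to the two exhaustive cases above.
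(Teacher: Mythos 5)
Your proposal is correct and follows essentially the same route as the paper: pointwise convergence from the error bound of Theorem \ref{theorem3.1} combined with the density result of Lemma \ref{lemmadense}, and relative compactness of $\cup_{h\in\mathcal M}(T-T_h)B$ from the uniform stability bounds \eqref{sta}, \eqref{stad} together with the compactness of $T$ (Lemma \ref{Lemma 3.4}) and the discrete compactness property (Lemma \ref{dcp}). Your subsequence dichotomy in part \textbf{b)} is in fact slightly more explicit than the paper's own (rather terse) argument.
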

\begin{proof}
In virtue of Lemma \ref{lemmadense}, $\cup_{h\in\mathcal M}\bm{\mathcal{H}}_h$ and $\cup_{h\in\mathcal M}  \mathcal Q_h$ are dense in $\bm{\mathcal{H}}$ and $\mathcal Q_h$, respectively. We deduce from \eqref{pri} that for any $(\bm f,\bm g)\in \mathbf L^2(\Omega))\times \mathbf L^2(\Omega)$
\begin{eqnarray*}
\|( T - T_h)(\bm f,\bm g)\|_{} \rightarrow 0.
 \end{eqnarray*}
 That is,  $ T_h$ converges to $ T$ pointwisely.
 From  the stability of $T$ and $T_h$ in  \eqref{sta} and  \eqref{stad}, $ T, T_h : \mathbf L^2(\Omega)\rightarrow \bm{\mathcal{H}}$ are linear bounded uniformly with respect to $h$, and $\cup_{h\in\mathcal M}( T- T_h)B$ is a bounded set in $\bm{\mathcal{H}}$. 
 From
  the compactness of $T$ in Lemma \ref{Lemma 3.4} and the discrete compactness property of $\bm{\mathcal{H}}_{0,h}$ in Lemma \ref{dcp}, we
know that $\cup_{h\in\mathcal M}( T- T_h)B$ is a relatively compact set in $\mathbf L^2(\Omega)\times \mathbf L^2(\Omega)$, which implies  collectively
compact convergence $ T_h\rightarrow T$.  
\end{proof}

Next we  define the  operator $T^*\in\mathcal L(\mathbf L^2(\Omega)\times \mathbf L^2(\Omega),\bm{\mathcal{H}}_0)$ as
\begin{align}\label{ad}
a((\bm \omega',\bm \upsilon'),T^*(\bm f,\bm g))=c((\bm \omega',\bm \upsilon'),(\bm f,\bm g)),~\forall (\bm \omega',\bm \upsilon')\in
 \bm{\mathcal{H}}_0.
\end{align}
Then $T^*$ is the adjoint operator of $T$ satisfying
\begin{align*}
c((\bm f,\bm g),T^*(\bm \omega',\bm \upsilon'))=c(T(\bm f,\bm g),(\bm \omega',\bm \upsilon')),~\forall (\bm f,\bm g),(\bm \omega',\bm \upsilon')\in
\mathbf L^2(\Omega)\times\mathbf L^2(\Omega).
\end{align*}
Similarly we can define the  operator $T_h^*\in\mathcal L(\mathbf L^2(\Omega)\times \mathbf L^2(\Omega),\bm{\mathcal{H}}_{0,h})$ as
\begin{align}\label{ad}
a((\bm \omega',\bm \upsilon'),T_h^*(\bm f,\bm g))=c((\bm \omega',\bm \upsilon'),(\bm f,\bm g)),~\forall (\bm \omega',\bm \upsilon')\in
 \bm{\mathcal{H}}_{0,h}.
\end{align}
Due to the definition of $T^*$ and $T_h^*$ and the symmetric properties  \eqref{sp} of $a(\cdot,\cdot)$ and $c(\cdot,\cdot)$, we see that $T^*=T$ and $T_h^*=T_h$. Hence both $\lambda$ and $\overline\lambda$ are   eigenvalues of $T$.

According to the  spectral approximation theory in \cite{babuska,osborn},
there are exactly $\beta$ eigenvalues $\lambda_{j,h}$ $({j=m,\cdots,m+\beta-1})$
of $T_h$ converging to $\lambda$ as $h\rightarrow0$. Now we are in a position to present the error estimate for finite element eigenpairs.
\begin{theorem}
Let $\lambda_{h  }\in \{\lambda_{j,h}\}_{j=m}^{m+\beta-1}$ be an eigenvalue of \eqref{Problem_Weakd} converging to  the eigenvalue $\lambda:=\lambda_m$ of \eqref{Problem_Weak} and $\ker((T-\lambda^{-1})^\alpha),\ker((T-\overline\lambda^{-1})^\alpha)\subset \mathbf H^s(\curl,\Omega)(s>1/2)$. Let $(\bm \omega_h,\bm \upsilon_h)$ be an eigenfunction corresponding to $\lambda_{h  }$ with $\|(\bm \omega_h,\bm \upsilon_{h})\|=1$ then there is    $(\bm \omega,\bm \upsilon)\in\ker(T-\lambda^{-1})$ such that
\begin{align}\label{4.19s}
|\lambda-{\lambda}_{h  }| &\lesssim h^{2\min(s,k+1)/\alpha}, \\
\|(\bm \omega_{h}-\bm \omega,\bm \upsilon_{h}-\bm \upsilon)\|_{\bm {\mathcal H}}&\lesssim h^{\min(s,k+1)/\alpha}.\label{l4}
\end{align}
\end{theorem}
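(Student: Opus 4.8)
The plan is to invoke the Babuška--Osborn spectral approximation theory for the collectively compact convergence $T_h \to T$ that has just been established. Since $T^*=T$ and $T_h^*=T_h$ (self-adjointness with respect to $c(\cdot,\cdot)$), the framework simplifies considerably: the ascent $\alpha$ and algebraic multiplicity $\beta$ govern the convergence rates, and the gap estimates reduce to measuring how well the invariant subspaces $\ker((T-\lambda^{-1})^\alpha)$ are approximated by $\bm{\mathcal{H}}_h$. First I would recall the abstract estimates from \cite{babuska,osborn}: for a collectively compact sequence converging to a compact $T$, one has
\begin{align*}
|\lambda^{-1}-\lambda_{h}^{-1}| &\lesssim \Big(\sup_{\substack{(\bm\phi,\bm\eta)\in \ker((T-\lambda^{-1})^\alpha)\\ \|(\bm\phi,\bm\eta)\|=1}} \|(T-T_h)(\bm\phi,\bm\eta)\|\Big)^{1/\alpha},
\end{align*}
and an analogous (linear, i.e.\ non-rooted) bound for the eigenfunction error once a suitable spectral projection is introduced. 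Converting from $\lambda^{-1}$ to $\lambda$ (and squaring appropriately for the self-adjoint eigenvalue estimate) is a routine manipulation that produces the exponents $2\min(s,k+1)/\alpha$ and $\min(s,k+1)/\alpha$.

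The substantive work is to bound $\|(T-T_h)(\bm\phi,\bm\eta)\|$ for $(\bm\phi,\bm\eta)$ ranging over the (finite-dimensional, hence regular) eigenspace. Here I would apply Theorem \ref{theorem3.1}: since $T(\bm\phi,\bm\eta)=(\bm w,\bm v)$ and $T_h(\bm\phi,\bm\eta)=(\bm w_h,\bm v_h)$ solve \eqref{ProblemWeak} and \eqref{Problem_Weakdd} with right-hand data $(\bm f,\bm g)=(\bm\phi,\bm\eta)$, the a priori estimate \eqref{pri} gives
\begin{align*}
\|(\bm w_h-\bm w,\bm v_h-\bm v)\|_{\bm{\mathcal{H}}}\lesssim \inf_{(p',q')\in \mathcal Q_h}\|(p'-p,q'-q)\|_{\mathcal Q}+\inf_{(\bm w',\bm v')\in \bm{\mathcal{H}}_h}\|(\bm w-\bm w',\bm v-\bm v')\|_{\bm{\mathcal{H}}}.
\end{align*}
I would then bound both infima by interpolation error. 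For the $\bm{\mathcal{H}}_h$-infimum I use the edge-element interpolation estimate \eqref{i2}: under the hypothesis $\ker((T-\lambda^{-1})^\alpha)\subset \mathbf H^s(\curl,\Omega)$ the solution components $\bm w,\bm v$ inherit $\mathbf H^{\min(s,k+1)}(\curl,\Omega)$ regularity, yielding a factor $h^{\min(s,k+1)}$. For the $\mathcal Q_h$-infimum I use the standard scalar Lagrange interpolation estimate on $P_{k+1}$, which delivers the same power of $h$. Combining gives $\|(T-T_h)(\bm\phi,\bm\eta)\| \lesssim h^{\min(s,k+1)}\|(\bm\phi,\bm\eta)\|$ uniformly on the eigenspace.

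The main obstacle is the regularity bookkeeping: I must verify that the required $\mathbf H^s(\curl,\Omega)$ smoothness of the generalized eigenfunctions transfers to the source-problem solution $(\bm w,\bm v)=T(\bm\phi,\bm\eta)$ and to the scalar potentials $(p,q)$, so that the interpolation estimates actually apply at the claimed order. Because $(\bm\phi,\bm\eta)$ lies in the invariant subspace, one has $T(\bm\phi,\bm\eta)=\lambda^{-1}(\bm\phi',\bm\eta')$ for some element of $\ker((T-\lambda^{-1})^\alpha)$ when $\alpha=1$, and more generally a chain argument through the Jordan structure shows $T(\bm\phi,\bm\eta)$ stays in a space of the assumed regularity; this self-improving feature of the eigenspace is what lets the same exponent appear in the eigenfunction bound without loss. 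Finally, to pass from the best-approximation-on-the-eigenspace quantity to the concrete $h$-power in \eqref{4.19s}--\eqref{l4}, I combine the abstract $1/\alpha$ root from the spectral theory with the $h^{\min(s,k+1)}$ approximation rate, which immediately produces the stated rates $h^{2\min(s,k+1)/\alpha}$ for the eigenvalue and $h^{\min(s,k+1)/\alpha}$ for the eigenfunction, completing the proof.
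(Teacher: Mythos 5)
Your overall strategy---Babu\v ska--Osborn for the collectively compact family $T_h\to T$, with $\|(T-T_h)(\bm\phi,\bm\eta)\|$ on the invariant subspace controlled through Theorem \ref{theorem3.1} and the edge-element interpolation estimate \eqref{i2}---is the same as the paper's, but two steps that carry real content are missing. The first is the doubled eigenvalue rate. The abstract estimate you quote, $|\lambda^{-1}-\lambda_h^{-1}|\lesssim\bigl(\sup\|(T-T_h)(\bm\phi,\bm\eta)\|\bigr)^{1/\alpha}$, yields only $h^{\min(s,k+1)/\alpha}$; calling the passage to $h^{2\min(s,k+1)/\alpha}$ a ``routine squaring'' hides the actual mechanism. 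The paper uses the sharper Osborn bound
$|\lambda-\lambda_h|^\alpha\lesssim\sum_{i,j}|c((T-T_h)\bm\varphi_i,\bm\varphi_j^*)| +\|(T-T_h)|_{\ker((T-\lambda^{-1})^\alpha)}\|\,\|(T-T_h)|_{\ker((T-\overline\lambda^{-1})^\alpha)}\|$
and must then show that the first sum is itself quadratic in the error; this is done via the Galerkin-orthogonality identity $c((T-T_h)\bm\varphi_i,\bm\varphi_j^*)=a((T-T_h)\bm\varphi_i,T^*\bm\varphi_j^*)=a((T-T_h)\bm\varphi_i,(T^*-T_h^*)\bm\varphi_j^*)$, which in turn relies on $T^*=T$, $T_h^*=T_h$, the symmetry \eqref{sp}, and the observation that the multipliers vanish ($p=q=p_h=q_h=0$) for data in $\ker((T-\lambda^{-1})^\alpha)\subset\bm{\mathcal H}_0$. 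None of this appears in your argument, so the rate in \eqref{4.19s} is unsupported. (Relatedly, your plan to bound the $\mathcal Q_h$-infimum by Lagrange interpolation of $(p,q)$ presumes regularity of the potentials that is never established; the paper sidesteps this by noting they are zero.)

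The second gap concerns the norm in \eqref{l4}. The spectral approximation theory is set in $\mathbf L^2(\Omega)\times\mathbf L^2(\Omega)$---that is where the collective compactness of $T_h\to T$ was proved---so it delivers the eigenfunction error only in $\|\cdot\|$, namely $\|(\bm\omega_h-\bm\omega,\bm\upsilon_h-\bm\upsilon)\|^\alpha\lesssim\|(T-T_h)|_{\ker((T-\lambda^{-1})^\alpha)}\|$. To obtain the $\bm{\mathcal H}$-norm bound one must bootstrap: write $(\bm\omega_h,\bm\upsilon_h)=\lambda_hT_h(\bm\omega_h,\bm\upsilon_h)$ and $(\bm\omega,\bm\upsilon)=\lambda T(\bm\omega,\bm\upsilon)$, insert the intermediate term $\lambda T_h(\bm\omega,\bm\upsilon)$, and invoke the uniform stability \eqref{stad} of $T_h$ from $\mathbf L^2$ to $\bm{\mathcal H}$ together with Theorem \ref{theorem3.1}, the $L^2$ eigenfunction estimate, and \eqref{4.19s}. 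Your proposal treats the $\bm{\mathcal H}$-norm eigenfunction estimate as if it came directly from the abstract theory, which it does not.
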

\begin{proof} 
Note that any function $l\in\mathcal (\mathbf L^2(\Omega)\times \mathbf L^2(\Omega))'$ corresponds to a  linear bijective operator $\mathcal B\in\mathcal L((\mathbf L^2(\Omega)\times\mathbf L^2(\Omega))',\mathbf L^2(\Omega)\times\mathbf L^2(\Omega))$ such that $c(\mathcal B l,(\bm w',\bm v'))= l(\bm w',\bm v'),~\forall(\bm w',\bm v')\in \mathbf L^2(\Omega)\times \mathbf L^2(\Omega)$.
 From   Theorem 7.2  (inequality (7.12)), Theorem 7.3 and Theorem 7.4
in \cite{babuska} we obtain
\begin{align}
&|\lambda-{\lambda}_{h  }|^\alpha \lesssim \sum\limits_{i,j=1}^{\beta}| c((T-T_{h  })\bm\varphi_{i},\bm\varphi_{j}^*)|\nonumber\\
&\quad+\|(T-T_{h  })|_{\ker((T-\lambda^{-1})^\alpha)}  \|_{\mathcal L(\mathbf L^2(\Omega)\times \mathbf L^2(\Omega))}\|(T-T_{h  })|_{\ker((T-\overline\lambda^{-1})^\alpha)}  \|_{\mathcal L(\mathbf L^2(\Omega)\times \mathbf L^2(\Omega))},\label{l1}\\
&\|(\bm \omega_{h}-\bm \omega,\bm \upsilon_{h}-\bm \upsilon)\|^\alpha\lesssim \|(T-T_{h  })|_{\ker((T-\lambda^{-1})^\alpha)}  \|_{\mathcal L(\mathbf L^2(\Omega))\times \mathbf L^2(\Omega))},\label{l2}
\end{align}
where $\bm \varphi_{1},\cdots,\bm \varphi_{\beta}$ constitute  a set of  basis functions for $\ker((T-\lambda^{-1})^\alpha)$ and
$\bm \varphi_{1}^*,\cdots,\bm \varphi_{\beta}^*$ constitute  a set of  basis functions for $\ker((T-\overline\lambda^{-1})^\alpha)$.
It is  easy to see that $\ker((T-\lambda^{-1})^\alpha)$ and $\ker((T-\overline\lambda^{-1})^\alpha)$ are $T$-invariant  subspaces. According to   Theorem \ref{theorem3.1} and the interpolation estimate \eqref{i2} we further deduce from \eqref{l2}
\begin{align}\label{ll2}
\|(\bm \omega_{h}-\bm \omega,\bm \upsilon_{h}-\bm \upsilon)\|_{}\lesssim   h^{\min(s,k+1)/\alpha}.
\end{align}
Note that  $( p, q)= p_h, q_h)=0$ in (\ref{Problem_Weak}) and (\ref{Problem_Weakdd}) for $(\bm f,\bm g)\in  \ker((T-\lambda^{-1})^\alpha)\subset\bm{\mathcal{H}}_0$.  Moreover recalling the problems (\ref{Problem_Weak}) and (\ref{Problem_Weakdd}) and the symmetry properties \eqref{sp} of $a(\cdot,\cdot)$ and $c(\cdot,\cdot)$, we get
 \begin{align*}
  c((T-T_{h  })\bm\varphi_{i},\bm \varphi_{j})=  a((T-T_{h  })\bm\varphi_{i},T\bm \varphi_{j}^*)
  =  a((T-T_{h  })\bm\varphi_{i},(T-T_{h  })\bm \varphi_{j}^*).
 \end{align*}
Substituting this into \eqref{l1}, we deduce    (\ref{4.19s}) from Theorem \ref{theorem3.1} and \eqref{i2}.
By the boundedness \eqref{stad} of $T_{h}$  and   Theorem \ref{theorem3.1} we derive
\begin{align*}
&\quad\|(\bm \omega_{h}-\bm \omega,\bm \upsilon_{h}-\bm \upsilon)\|_{\bm{\mathcal{H}}}=\|\lambda_h(T_{h}(\bm \omega_{h},\bm \upsilon_{h})-T_{}(\bm \omega_{},\bm \upsilon_{}))\|_{\bm{\mathcal{H}}}\nonumber\\
&\leq \| \lambda_h T_{h}(\bm \omega_{h},\bm \upsilon_{h})-\lambda  T_{h}
(\bm \omega,\bm \upsilon)\|_{\bm{\mathcal{H}}}+\| \lambda T_{h} (\bm \omega_{},\bm \upsilon_{})- \lambda T
(\bm \omega_{},\bm \upsilon_{}) \|_{\bm{\mathcal{H}}}
\nonumber\\
&\lesssim \|\lambda_h(\bm \omega_{h},\bm \upsilon_{h})-\lambda
(\bm \omega_{},\bm \upsilon_{})\|_{}  +\| \lambda T_{h} (\bm \omega_{},\bm \upsilon_{})- \lambda T
(\bm \omega_{},\bm \upsilon_{}) \|_{\bm{\mathcal{H}}}\nonumber\\
&\lesssim |\lambda_h-\lambda|+
\|(\bm \omega_{h}-\bm \omega,\bm \upsilon_{h}-\bm \upsilon)\|_{} + \inf_{(\bm w',\bm v')\in \bm{\mathcal{H}}_h}\|T(\bm w,\bm v)-(\bm w',\bm v'))\|_{\bm{\mathcal{H}}}
\end{align*}
which together with \eqref{4.19s}, \eqref{ll2} and \eqref{i2}  yields the estimate \eqref{l4}.
\end{proof}

\begin{remark}The condition $\ker((T-\lambda^{-1})^\alpha)\subset \mathbf H^s(\curl,\Omega)(s>1/2)$ in the above theorem is reasonable. For the Maxwell's transmission problem \eqref{Problem_2ss} with $(\bm f,\bm g)\in \mathbf L^2(\Omega)\times \mathbf L^2(\Omega)$, we have $\curl(\bm A\curl\bm w),\curl^2 \bm v\in \mathbf L^2(\Omega)$ and $\bm \nu\times (A\curl\bm w-\curl \bm v)=\bm \nu\cdot (\curl\bm w-\curl \bm v)=0~{\rm on}\ \partial \Omega$. For simplicity, we assume that $\bm A=e(\bm x)I$ and $\bm N=n(\bm x)I$ where $ e(\bm x),n(\bm x)\in W^{1,\infty}(\Omega)$. Then $\mathrm{div}(\bm A\curl\bm w-\curl \bm v)\in \mathbf L^2(\Omega)$ and $\mathrm{div}(\curl\bm w-\curl \bm v)=0$.  According to Lemma \ref{lemmap1}, this implies that $\bm A\curl\bm w-\curl\bm v,\curl\bm w-\curl\bm v\in \mathbf H^r(\Omega)$ for some $r\in(1/2,1]$. In this case $\ker(T-\lambda^{-1})\subset \mathbf H^s(\curl,\Omega)(s\ge r)$. Furthermore, if $(\mathfrak{\bm w},\mathfrak{\bm v})\in\ker((T-\lambda^{-1})^{i+1})$ $(i=1,2,\cdots)$ then there is $(\bm f,\bm g)\in\ker((T-\lambda^{-1})^i)$ such that $T(\mathfrak{\bm w},\mathfrak{\bm v})=\lambda^{-1}(\mathfrak{\bm w},\mathfrak{\bm v})+(\bm f,\bm g)\in \mathbf L^2(\Omega)\times L^2(\Omega)$. Hence $\ker((T-\lambda^{-1})^\alpha)\subset \mathbf H^s(\curl,\Omega)(s\ge r)$. Similarly, the above argument is also true for $\ker((T-\overline\lambda^{-1})^\alpha)$.
\end{remark}

\section{Numerical verification}

\begin{table}
 \footnotesize
\caption{Numerical eigenvalues by linear edge element on the unit cube.}
\centering
\begin{tabular}{cccccccccccc}\hline
\multicolumn{9}{c}{$\bm A=2\bm I$,~$\bm N=16\bm I$}\\
$h$&$k_{1,h}$&$r_{1,h}$&$k_{2,h}$&$r_{2,h}$&$k_{3,h}$&$r_{3,h}$&$k_{4,h}$&$r_{4,h}$\\\hline
1/6&	1.20351& 	--& 	1.20369& 	--& 	1.20374& 	--& 	1.46189& 	--\\
1/7&	1.20512& 	1.89& 	1.20519& 	1.81& 	1.20524& 	1.82& 	1.46473& 	1.86\\
1/8&	1.20618& 	1.89& 	1.20625& 	1.91& 	1.20626& 	1.87& 	1.46673& 	1.99\\
1/9&	1.20694& 	1.96& 	1.20695& 	1.84& 	1.20697& 	1.86& 	1.46814& 	2.04\\
1/10&	1.20748& 	1.95& 	1.20749& 	1.93& 	1.20750& 	1.92& 	1.46907& 	1.90\\
1/11&	1.20787& 	1.88& 	1.20788& 	1.89& 	1.20789& 	1.87& 	1.46981& 	2.02
\\\hline
\multicolumn{9}{c}{$\bm A=F_1\bm I$,~$\bm N= F_2\bm I$}\\
$h$&$k_{1,h}$&$r_{1,h}$&$k_{2,h}$&$r_{2,h}$&$k_{3,h}$&$r_{3,h}$&$k_{4,h}$&$r_{4,h}$\\\hline
1/6&	4.39829& 	--& 	4.40194& 	--& 	4.40234& 	--& 	4.88009& 	--\\
1/7&	4.39598& 	2.29& 	4.39922& 	2.57& 	4.39995& 	2.24& 	4.88302& 	1.82\\
1/8&	4.39446& 	2.43& 	4.39776& 	2.25& 	4.39838& 	2.39& 	4.88502& 	1.87\\
1/9&	4.39352& 	2.32& 	4.39674& 	2.40& 	4.39732& 	2.46& 	4.88663& 	2.20\\
1/10&	4.39286& 	2.36& 	4.39610& 	2.18& 	4.39666& 	2.26& 	4.88765& 	1.96\\
1/11&	4.39239& 	2.33& 	4.39560& 	2.32& 	4.39617& 	2.32& 	4.88834& 	1.78
\\\hline
\multicolumn{9}{c}{$\bm A=\bm F_4$,~$\bm N=\bm F_3$}\\
$h$&$k_{1,h}$&$r_{1,h}$&$k_{2,h}$&$r_{2,h}$&$k_{3,h}$&$r_{3,h}$&$k_{4,h}$&$r_{4,h}$\\\hline
1/6&	3.85795& 	--& 	4.26145& 	--& 	4.43888& 	--& 	4.47280& 	--\\
1/7&	3.85954& 	1.66& 	4.26528& 	2.13& 	4.43646& 	2.32& 	4.47031& 	2.46\\
1/8&	3.86100& 	2.33& 	4.26733& 	1.74& 	4.43489& 	2.45& 	4.46890& 	2.27\\
1/9&	3.86187& 	2.09& 	4.26905& 	2.11& 	4.43386& 	2.47& 	4.46790& 	2.46\\
1/10&	3.86244& 	1.93& 	4.27014& 	1.87& 	4.43321& 	2.27& 	4.46728& 	2.23\\
1/11&	3.86288& 	1.95& 	4.27096& 	1.87& 	4.43275& 	2.28& 	4.46680& 	2.36
\\\hline
\end{tabular}
\end{table}

\begin{table}
 \footnotesize
\caption{Numerical eigenvalues by linear edge element on the thick-L domain.}
\centering
\begin{tabular}{cccccccccccc}\hline
\multicolumn{9}{c}{$\bm A=2\bm I$,~$\bm N=16\bm I$}\\
$h$&$k_{1,h}$&$r_{1,h}$&$k_{2,h}$&$r_{2,h}$&$k_{3,h}$&$r_{3,h}$&$k_{4,h}$&$r_{4,h}$\\\hline
1/4&	0.82137& 	--& 	0.89727& 	--& 	0.99069 &	--& 	1.07155 &	--\\
1/5&	0.82537& 	1.67& 	0.90022& 	2.05& 	0.99314 &	1.96 &	1.07400 &	1.95\\
1/6&	0.82776& 	1.73& 	0.90224& 	2.78 &	0.99450 &	1.99 &	1.07535 &	1.94\\
1/7&	0.82928& 	1.73& 	0.90252& 	0.64 &	0.99535 &	2.10 &	1.07619 &	2.00\\
1/8&	0.83039& 	1.90& 	0.90331& 	2.51 &	0.99588 &	2.02 &	1.07669 &	1.84\\
1/9&	0.83115& 	1.89& 	0.90377& 	2.26 &	0.99624 &	1.97 &	1.07708& 	2.06
\\\hline
\multicolumn{9}{c}{$\bm A=F_1\bm I$,~$\bm N= F_2\bm I$}\\
$h$&$k_{1,h}$&$r_{1,h}$&$k_{2,h}$&$r_{2,h}$&$k_{3,h}$&$r_{3,h}$&$k_{4,h}$&$r_{4,h}$\\\hline
1/4&	3.12372 &	-- &	3.12568 &	--& 	3.27184 &	--& 	3.36932 &	--\\
1/5&	3.11590 &	1.47 &	3.13604 &	1.41 &	3.28111 &	2.29 &	3.36829 &	--\\
1/6&	3.11026 &	1.81 &	3.14370 &	1.75 &	3.28676 &	2.86 &	3.36807 &	--\\
1/7&	3.10674 &	1.81 &	3.14850 &	1.74 &	3.28727 &	0.41 &	3.36840 &	0.82\\
1/8&	3.10425 &	1.93 &	3.15205 &	1.94 &	3.28956 &	2.62 &	3.36868 &	0.89\\
1/9&	3.10248& 	2.00 &	3.15441& 	1.86 &	3.29082& 	2.23 &	3.36895& 	1.11
\\\hline
\multicolumn{9}{c}{$\bm A=\bm F_4$,~$\bm N=\bm F_3$}\\
$h$&$k_{1,h}$&$r_{1,h}$&$k_{2,h}$&$r_{2,h}$&$k_{3,h}$&$r_{3,h}$&\multicolumn{2}{c}{$k_{4,h}$}&$r_{4,h}$\\\hline
1/4&	2.67101& 	--& 	3.13237 &	--& 	3.27326 &	--& 	\multicolumn{2}{c}{3.38505+0.028300i}&	--\\
1/5&	2.68064& 	1.55& 	3.12237 &	1.72 &	3.27839 &	2.08 &	\multicolumn{2}{c}{3.38540+0.027958i}&	0.69\\
1/6&	2.68656& 	1.60& 	3.11666 &	1.71 &	3.28251 &	3.53 &	\multicolumn{2}{c}{3.38563+0.027840i}&	0.63\\
1/7&	2.69035& 	1.59& 	3.11311 &	1.67 &	3.28238 &	-0.17 &	\multicolumn{2}{c}{3.38586+0.027832i}&	0.86\\
1/8&	2.69318& 	1.74& 	3.11059& 	1.75 &	3.28388 &	2.88 &\multicolumn{2}{c}{3.38602+0.027833i}&	0.72\\
1/9&	2.69508& 	1.64& 	3.10880& 	1.76 &	3.28464& 	2.30 &	\multicolumn{2}{c}{3.38621+0.027831i}&	1.19
\\\hline
\end{tabular}
\end{table}

\begin{table}
 \footnotesize
\caption{Numerical eigenvalues by quadratic edge element on the unit cube.}
\centering
\begin{tabular}{cccccccccccc}\hline
\multicolumn{9}{c}{$\bm A=2\bm I$,~$\bm N=16\bm I$}\\
$h$&$k_{1,h}$&$r_{1,h}$&$k_{2,h}$&$r_{2,h}$&$k_{3,h}$&$r_{3,h}$&$k_{4,h}$&$r_{4,h}$\\\hline
1/3&	1.209189 &	--& 	1.209218 &	--& 	1.209314 &	--& 	1.471039 &	--\\
1/4&	1.209586 &	3.03 &	1.209593 &	2.97 &	1.209595 &	2.45 &	1.472310 &	2.89\\
1/5&	1.209744 &	3.62 &	1.209745 &	3.56 &	1.209749 &	3.67 &	1.472861 &	3.69\\
1/6&	1.209803 &	3.50 &	1.209805 &	3.59 &	1.209807 &	3.54 &	1.473065 &	3.50\\
1/7&	1.209834 &	3.98 &	1.209835 &	3.93 &	1.209835 &	3.89 &	1.473167 &	3.88\\
1/8&	1.209850 &	4.31 &	1.209850 &	4.30 &	1.209851 &	4.36 &	1.473220 &	4.09\\
Ref.&	1.209871& 	-- &	1.209870& 	--&	1.209870& 	-- &	1.473293 &	--
\\\hline
\multicolumn{9}{c}{$\bm A=F_1\bm I$,~$\bm N= F_2\bm I$}\\
$h$&$k_{1,h}$&$r_{1,h}$&$k_{2,h}$&$r_{2,h}$&$k_{3,h}$&$r_{3,h}$&$k_{4,h}$&$r_{4,h}$\\\hline
1/3&	4.391999 &	--& 	4.395247 &	-- &	4.395818 &	-- &	4.893018 &	--\\
1/4&	4.391001 &	3.87 &	4.394126 &	4.02 &	4.394706 &	3.96 &	4.892394 &	3.77\\
1/5&	4.390704 &	4.20 &	4.393812 &	4.23 &	4.394379 &	4.39 &	4.892225 &	3.41\\
1/6&	4.390606 &	3.96 &	4.393707 &	4.11 &	4.394278 &	3.93 &	4.892147 &	4.03\\
1/7&	4.390562 &	4.17 &	4.393664 &	3.85 &	4.394233 &	4.16 &	4.892115 &	3.91\\
1/8&	4.390542 &	3.88 &	4.393643 &	4.06 &	4.394212& 	3.91 &	4.892098 &	4.06\\
Ref.&	4.390513& 	-- &	4.393612& 	-- &	4.394182 &	-- &	4.892076& 	--
\\\hline
\multicolumn{9}{c}{$\bm A=\bm F_4$,~$\bm N=\bm F_3$}\\
$h$&$k_{1,h}$&$r_{1,h}$&$k_{2,h}$&$r_{2,h}$&$k_{3,h}$&$r_{3,h}$&$k_{4,h}$&$r_{4,h}$\\\hline
1/3&	3.866098& 	--& 	4.275852 &	--& 	4.432275 &	--& 	4.466491 &	--\\
1/4&	3.865268& 	4.76& 	4.275364 &	4.17 &	4.431310 &	3.80 &	4.465404 &	4.11\\
1/5&	3.865103& 	3.92& 	4.275249 &	3.56 &	4.431012 &	4.26 &	4.465107 &	4.34\\
1/6&	3.865043& 	3.97 &	4.275201 &	3.86 &	4.430912 &	4.14 &	4.465010 &	4.12\\
1/7&	3.865018& 	3.75& 	4.275177 &	4.64 &	4.430870 &	4.13 &	4.464972 &	3.91\\
1/8&	3.865004& 	4.22 &	4.275168 &	3.55 &	4.430852 &	3.80 &	4.464952 &	4.06\\
Ref.&	3.864985& 	-- &	4.275154& 	-- &	4.430824& 	-- &	4.464924& 	--
\\\hline
\end{tabular}
\end{table}

\begin{table}
 \footnotesize
\caption{Numerical eigenvalues by quadratic edge element on the thick L-shaped domain.}
\centering
\begin{tabular}{cccccccccccc}\hline
&\multicolumn{4}{c}{$\bm A=2\bm I$,~$\bm N=16\bm I$}&\multicolumn{4}{c}{$\bm A=F_1\bm I$,~$\bm N= F_2\bm I$}\\
$h$&$k_{1,h}$&$k_{2,h}$&$k_{3,h}$&$k_{4,h}$&$k_{1,h}$&$k_{2,h}$&$k_{3,h}$&$k_{4,h}$\\\hline
1&0.81995& 	0.89721&	0.99263&	1.07513&3.12559 &	3.15480 & 	3.27532&  	3.37076 \\
1/2&0.83002 &	0.90371 &	0.99656 &	1.07640&3.10592 &	3.15133 & 	3.29075 & 	3.36773 \\
1/3&0.83250 &	0.90477 &	0.99731 &	1.07792&3.09997 &	3.15836 & 	3.29324 & 	3.36915 \\
1/4&0.83346 &	0.90506 &	0.99749 &	1.07830&3.09773 &	3.16144 & 	3.29411 & 	3.37002 \\
1/5&0.83393 &	0.90519 &	0.99756 &	1.07844&3.09653 &	3.16301 & 	3.29463 & 	3.37054 \\
1/6&0.83423 & 	0.90529 & 	0.99759 &	1.07851&3.09579& 	3.16405 & 	3.29502 & 	3.37087
\\\hline
\multicolumn{6}{c}{$\bm A=\bm F_4$,~$\bm N=\bm F_3$}\\
$h$&$k_{1,h}$&$k_{2,h}$&$k_{3,h}$&$k_{4,h}$\\\hline
1&	2.67570& 	3.16229& 	3.28325& 	\multicolumn{2}{c}{3.39955+0.026382i}\\
1/2&	2.69202& 	3.11227& 	3.28659& 	\multicolumn{2}{c}{3.38610+0.027600i}\\
1/3&	2.69799& 	3.10626& 	3.28673& 	\multicolumn{2}{c}{3.38644+0.027763i}\\
1/4&	2.70061& 	3.10400& 	3.28683& 	\multicolumn{2}{c}{3.38685+0.027848i}\\
1/5&	2.70195& 	3.10279& 	3.28684& 	\multicolumn{2}{c}{3.38712+0.027884i}\\
1/6&	2.70282& 	3.10204& 	3.28695& 	\multicolumn{2}{c}{3.38729+0.027908i}
\\\hline
\end{tabular}
\end{table}

In this section, let $\Omega=(0,1)^3$  or the thick L-shape domain $\Omega=((-1,1)^2\backslash(-1,0]^2)\times(0,1)$.
For the setting of $\bm A$ and $\bm N$, we consider the following three cases\\

(1) $\bm A=2\bm I$, $\bm N=16\bm I$, (2) $\bm A=F_1\bm I$, $\bm N=F_2\bm I$ and (3) $\bm A=\bm F_4 $, $\bm N=\bm F_3$,
\\
where
\begin{eqnarray*}
& F_1=\exp(x_1+x_2+x_3)+6,~ F_2=8+x_1-x_2+x_3,~\bm F_3=\left[
\begin{tabular}{ccc}
16& $x_1$& $x_2$\\$x_1$& 16& $x_3$\\$x_2$& $x_3$& 14
\end{tabular}
\right],\\
&\bm F_4
=\left[
\begin{tabular}{ccc}
$- \frac{x_1^2}8 + \frac{9x_1}8 - \frac{9x_2}8 + \frac{65}4$& $\frac{3x_1}8 - \frac{3x_2}8 - \frac{3x_1^2}8 + \frac3 4$&        0\\
 $\frac{3x_1^2}8 - \frac{3x_1}8 + \frac{3x_2}8 - \frac3 4$&       $ \frac{9x_1^2}8 - \frac{x_1}8 + \frac{x_2}8 + \frac{55}4$&        0\\
                                   0&                                   0& $x_3^2 + 12$
\end{tabular}
\right].
\end{eqnarray*}
It is known from \cite{sun2} that  the lowest eigenvalue of $\bm F_3$ exceeds 1.
Moreover, the   eigenvalues of $\bm F_4$   are    $x_1^2 + 14$,   $x_3^2 + 12$ and $x_1 - x_2 + 16$, minimum of which  exceeds 1. These settings of $\bm A$ and $\bm N$ ensure that $A_*>1$ and $N_*>1$.

In the mesh refinement process, the unit cube or  the thick L-shape domain is partitioned into
small congruent cubes, and each small cube is further divided into 12 tetrahedra. 
The mesh data structure utilized in this work is provided by the iFEM package within MATLAB \cite{chenlong}.

The computed lowest four eigenvalues obtained by the linear and quadratic edge element methods
are provided in Tables 1-4. When the mesh size $h$ on the cube (resp. the thick L-shape domain) ranges   $1/6\sim1/11$ (resp. $1/4\sim1/9$ ) the number of degrees of freedom in  the linear  edge element method ranges   $6084\sim37334$ (resp. $5416\sim61326$ ). When the mesh size $h$ on the cube (resp. the thick L-shape domain) ranges   $1/3\sim1/8$ (resp. $1\sim1/6$ ) the number of degrees of freedom in  the quadratic  edge element method ranges   $4140\sim77920$ (resp. $476\sim98616$ ).

We use the following formula to derive the reference values of the eigenvalue $k_j$ on the cube:
$k_j\approx k_{j,h_{end}}-Mean_i (C_{j,i}) \times h_{end}^4$ where  $k_{j,h_i}-k_{j,h_{i+1}}=C_{j,i}(h_i^4-h_{i+1}^4)$ and $h_{end}$ is
  the smallest  mesh size  available from computation. With this formula and the numerical eigenvalues obtained by the quadratic edge element method, the reference eigenvalues  on the cube  are listed in Table 3. Then we can utilize the formula
\[r_{j,h_{i}}=\frac{\log(|k_j-k_{j,h_i}|)-\log(|k_{j}-k_{j,h_{i-1}}|))}{\log(h_{i})-\log(h_{i-1})}\]
to approximately compute the convergence rate $r_{j,h_{i}}$ of the numerical eigenvalue $k_{j,h_i}$.

Obtaining the reference eigenvalue on the thick L-shape domain poses some challenges due to the concave dihedral angle of the domain, which may result in singularity of the associated eigenfunction.
Therefore we regard the numerical eigenvalue  obtained by the quadratic edge element method on a fine mesh  as
the 'exact' eigenvalue for computing the convergence rate  of the numerical eigenvalue obtained by the linear edge element method.
  The convergence rates of the numerical eigenvalues obtained by the linear edge element method (resp. the quadratic edge element method) are presented in Tables 1 and 2 (resp. Table 3). We can see from Tables 1 and 2 that the convergence rates of the numerical eigenvalues on the cube obtained by the linear and quadratic edge element methods are approximately two and four, respectively,  consistent with our theoretical expectations.   Table 2 indicates that the convergence rates of the numerical eigenvalues on the thick L-shape domain obtained by the linear edge element  methods do not exceed two; in particular, when $\bm A =  F_1\bm I$ and $\bm N =  F_2\bm I$ or $\bm A = \bm F_4$ and $\bm N = \bm F_3$ the fourth  numerical eigenvalue on this domain converges  showly.

Let $(\bm \omega_1,\bm \upsilon_1)$ denote  the first eigenfunction. We present the computed  $\bm \upsilon_1$ on the cube in Fig. 1 and  $\bm\omega_1-\bm \upsilon_1$ in Fig. 2. Due to the limited length of this paper, we only display  the computed  $\bm\omega_1-\bm \upsilon_1$ on the thick L-shape domain in Fig. 3.  The numerical results for the remaining cases   2)$\sim $4)  of $\bm A$ and $\bm N$ are similar and therefore not listed here.

\begin{figure}[htp]
\caption{The three components of   $\bm \upsilon_1$ on the cube with $\bm A=2\bm I,\bm N = 16\bm I$ (at the top), $\bm A=F_1\bm I,\bm N = F_2\bm I$ (at the middle) and $\bm A=\bm F_4,\bm N = \bm F_3$ (at the bottom) computed by the quadratic edge element method.}
\label{fig-square}
\begin{center}
\includegraphics[width=4.2cm, height=4.2cm]{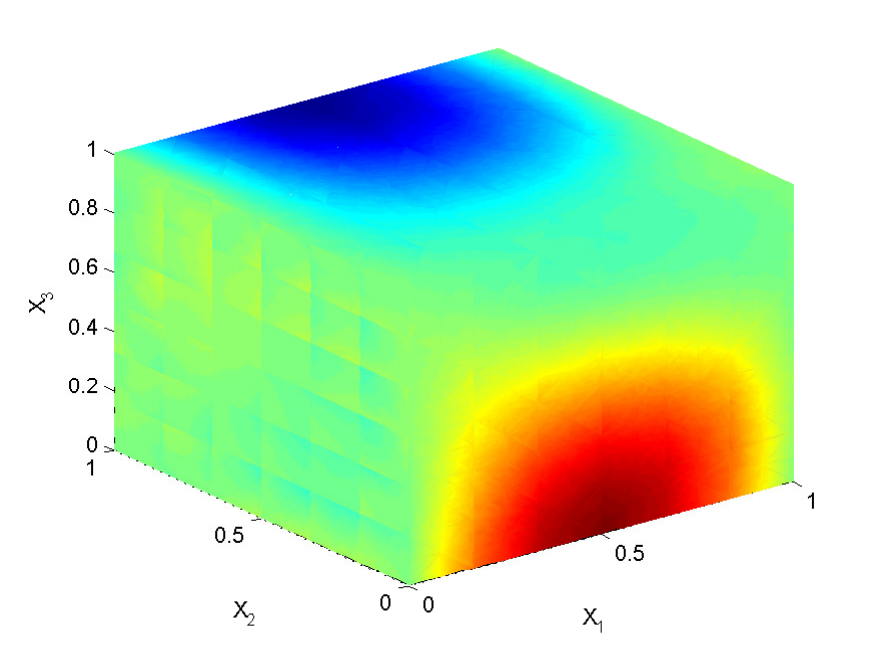}
\includegraphics[width=4.2cm, height=4.2cm]{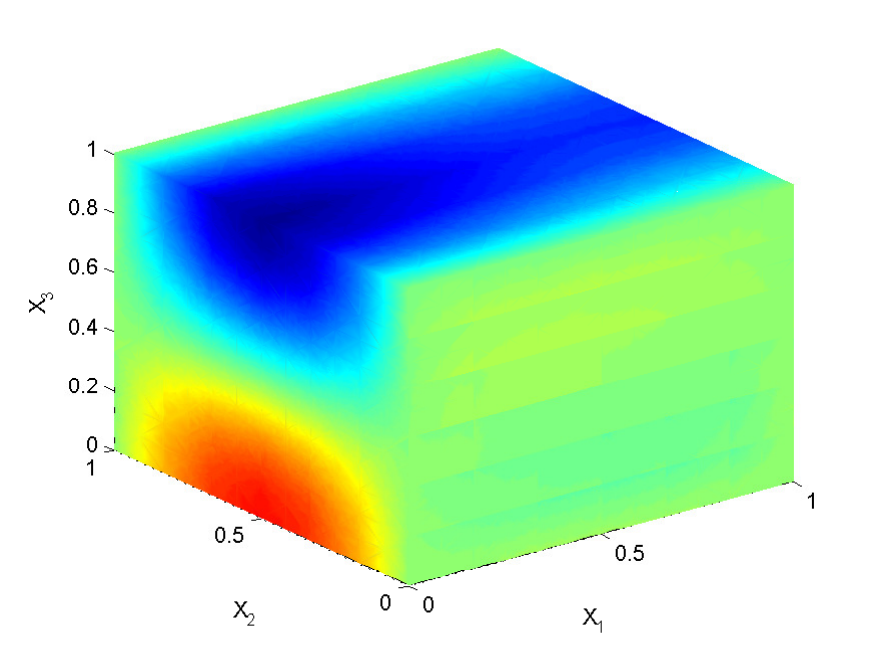}
\includegraphics[width=4.2cm, height=4.2cm]{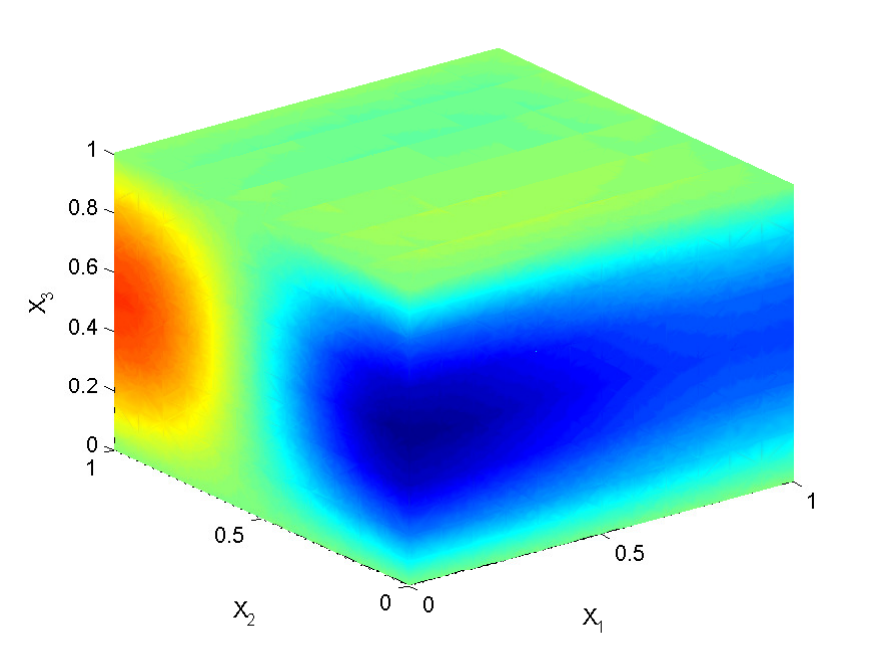}\\
\includegraphics[width=4.2cm, height=4.2cm]{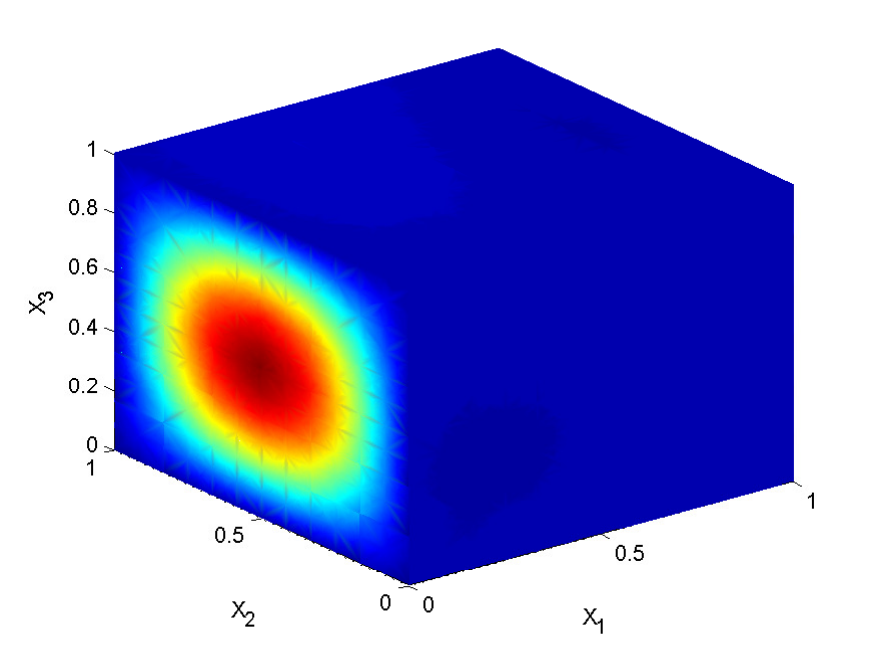}
\includegraphics[width=4.2cm, height=4.2cm]{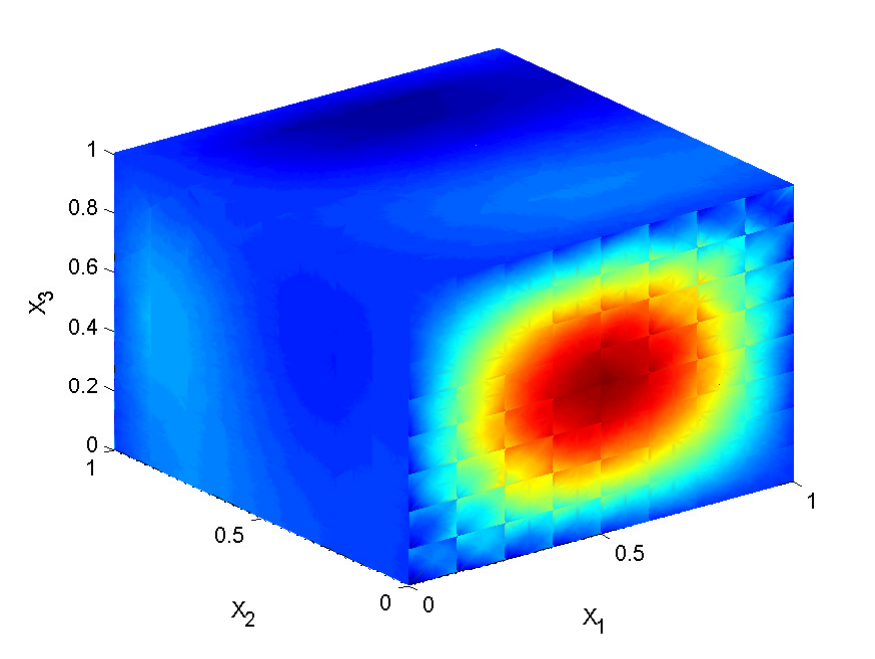}
\includegraphics[width=4.2cm, height=4.2cm]{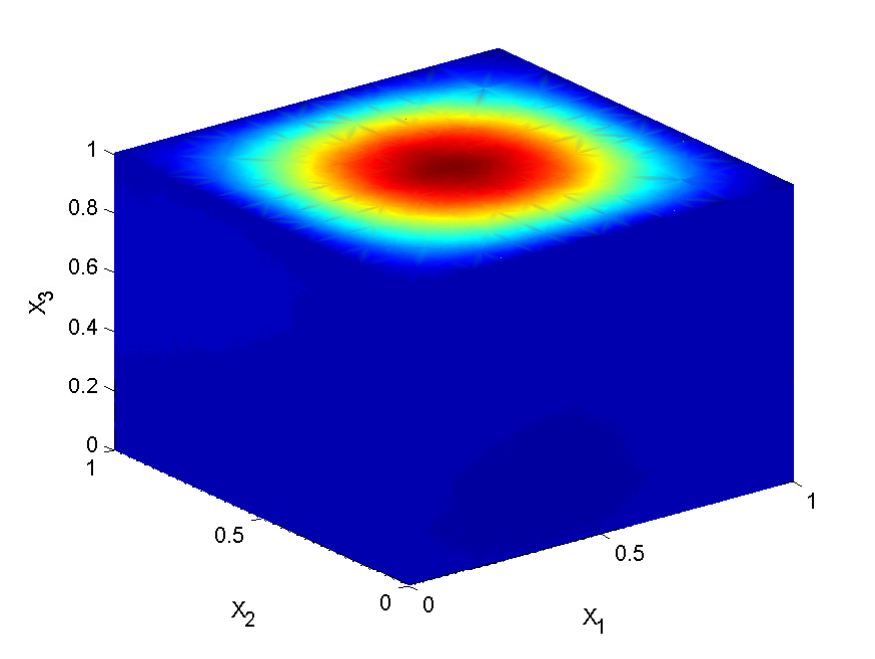}\\
\includegraphics[width=4.2cm, height=4.2cm]{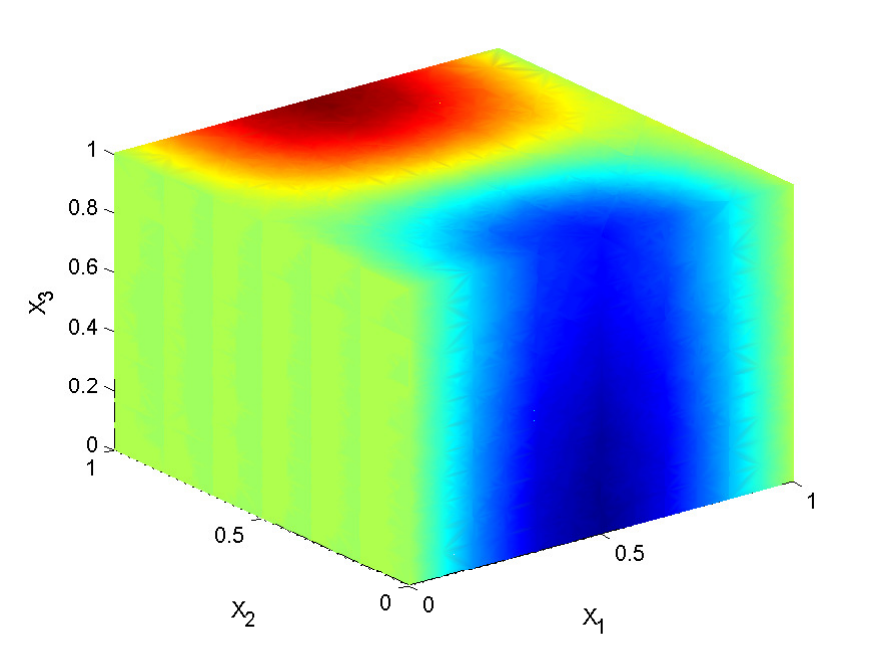}
\includegraphics[width=4.2cm, height=4.2cm]{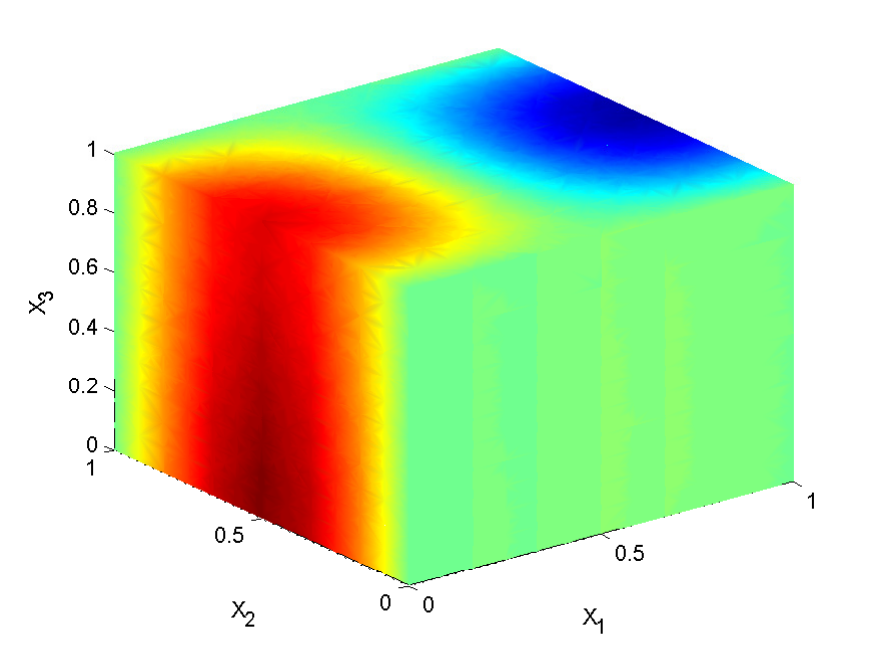}
\includegraphics[width=4.2cm, height=4.2cm]{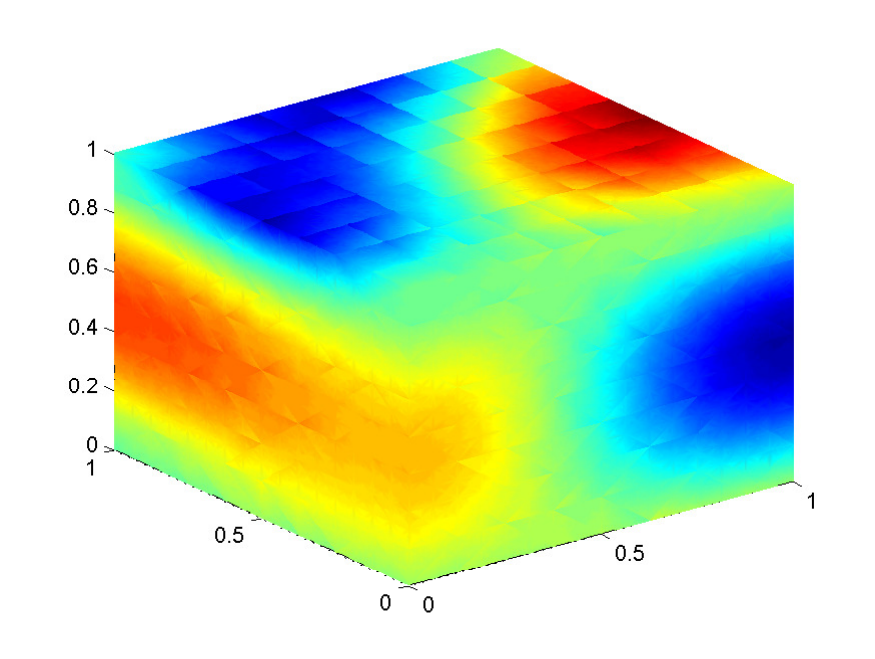}
\end{center}
\end{figure}

\begin{figure}[htp]
\caption{The three components of   $\bm \omega_1-\bm \upsilon_1$ on the cube with $\bm A=2\bm I,\bm N = 16\bm I$ (at the top), $\bm A=F_1\bm I,\bm N = F_2\bm I$ (at the middle) and $\bm A=\bm F_4,\bm N = \bm F_3$ (at the bottom) computed by the quadratic edge element method.}
\label{fig-square}
\begin{center}
\includegraphics[width=4.2cm, height=4.2cm]{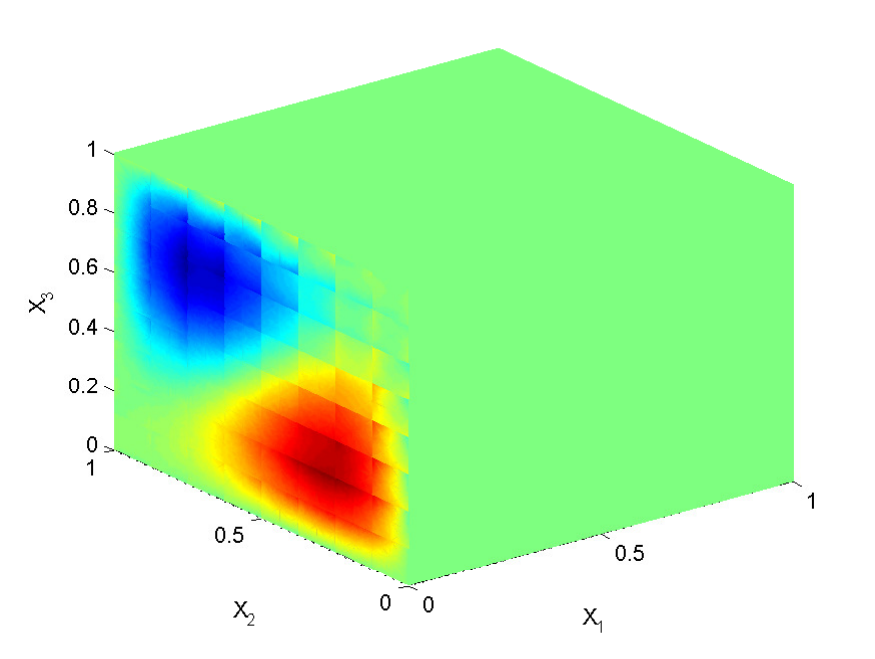}
\includegraphics[width=4.2cm, height=4.2cm]{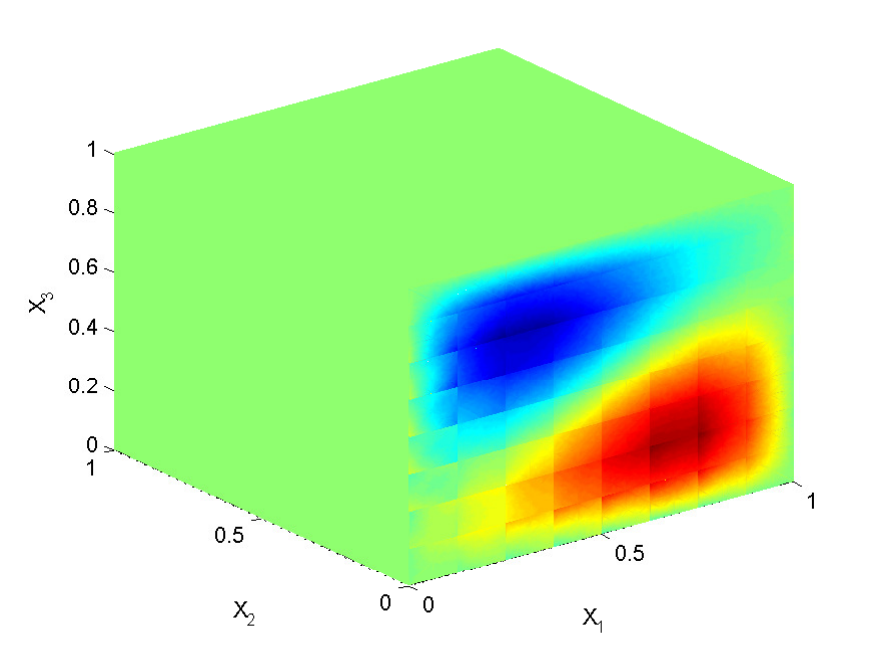}
\includegraphics[width=4.2cm, height=4.2cm]{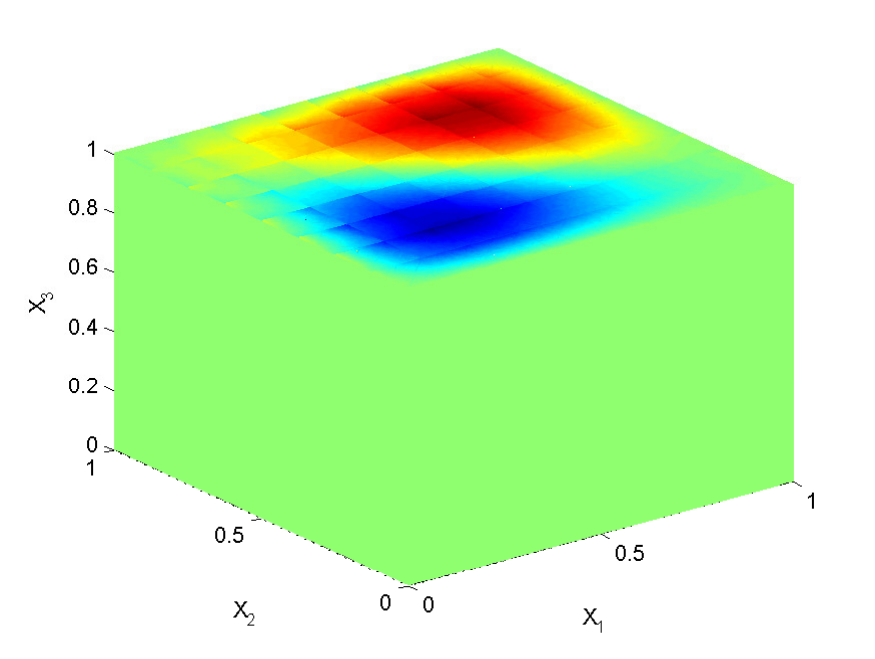}\\
\includegraphics[width=4.2cm, height=4.2cm]{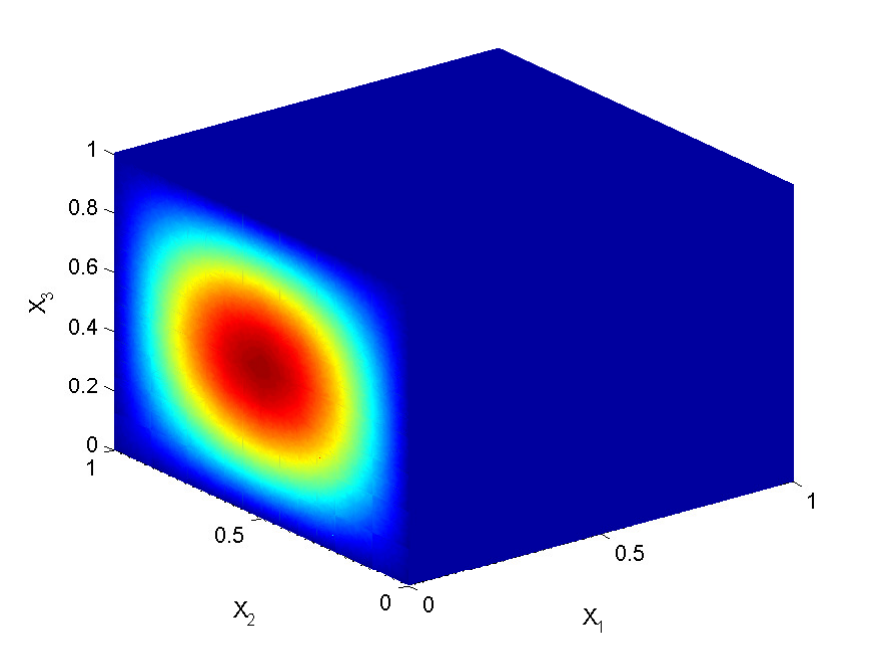}
\includegraphics[width=4.2cm, height=4.2cm]{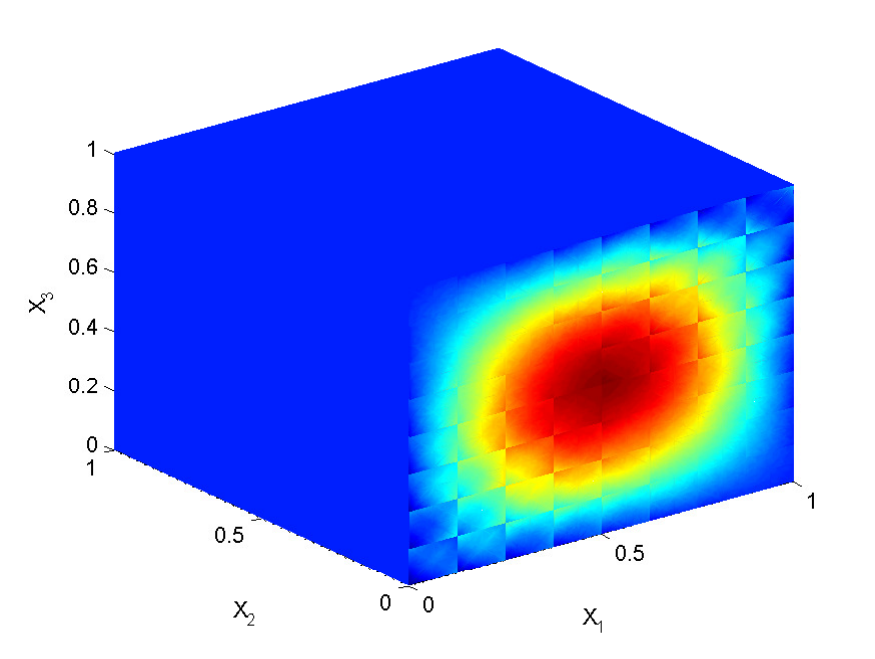}
\includegraphics[width=4.2cm, height=4.2cm]{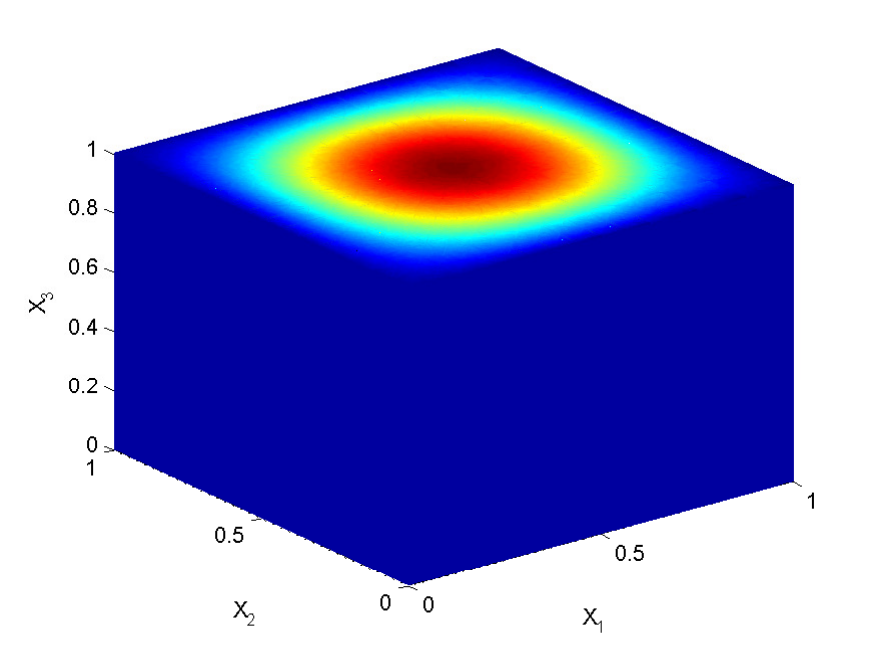}\\
\includegraphics[width=4.2cm, height=4.2cm]{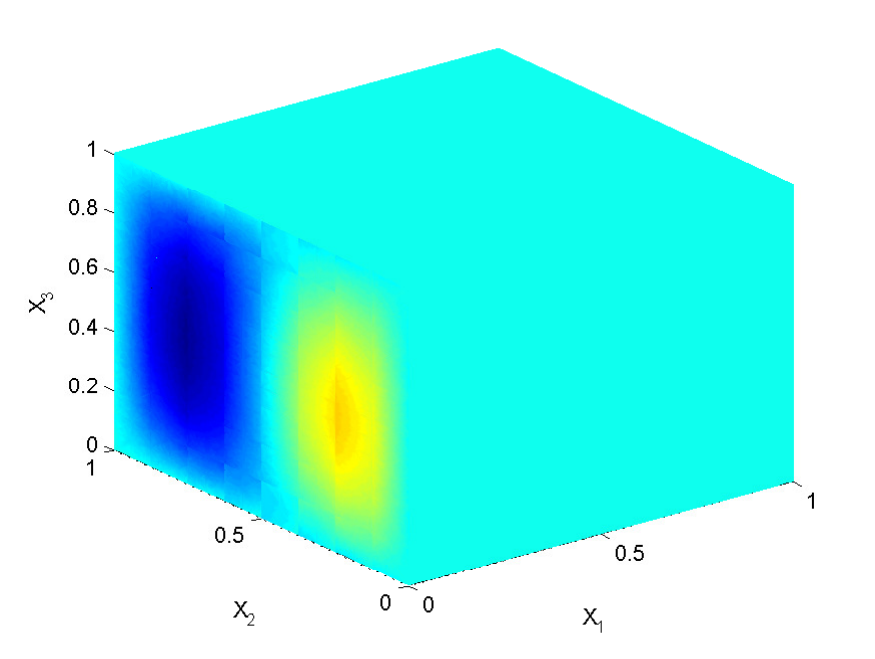}
\includegraphics[width=4.2cm, height=4.2cm]{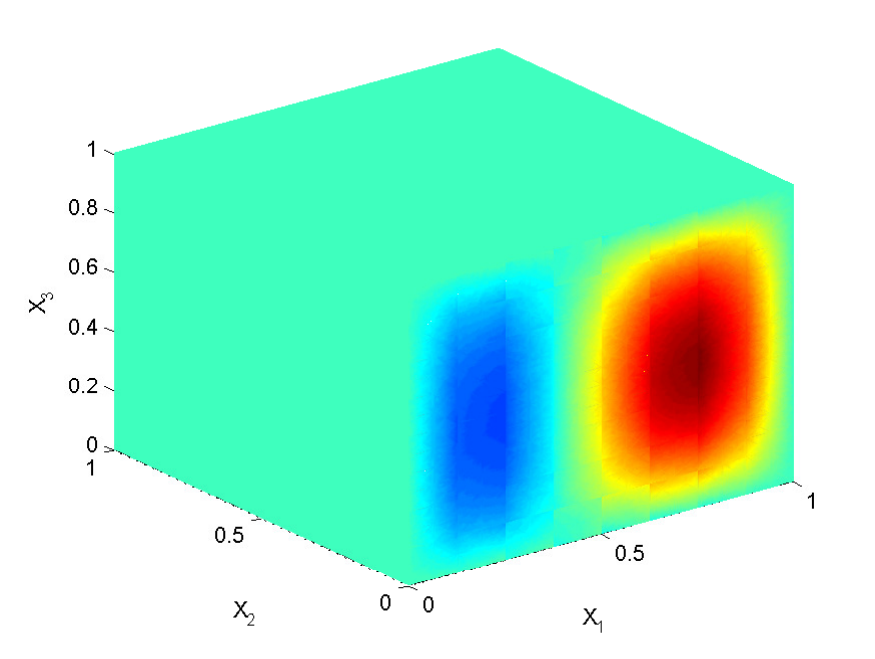}
\includegraphics[width=4.2cm, height=4.2cm]{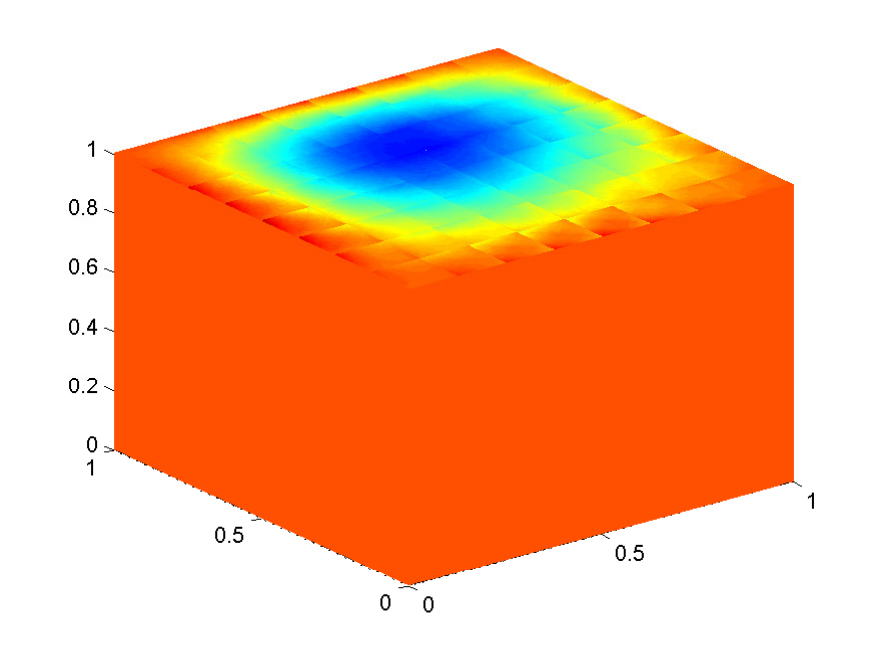}
\end{center}
\end{figure}

\begin{figure}[htp]
\caption{The three components of $\bm\omega_1-\bm \upsilon_1$ on the thick L-shape domain with $\bm A=2\bm I,\bm N = 16\bm I$ (at the top), $\bm A=F_1\bm I,\bm N = F_2\bm I$ (at the middle) and $\bm A=\bm F_4,\bm N = \bm F_3$ (at the bottom) computed by the quadratic edge element method.}
\label{fig-square}
\begin{center}
\includegraphics[width=4.2cm, height=4.2cm]{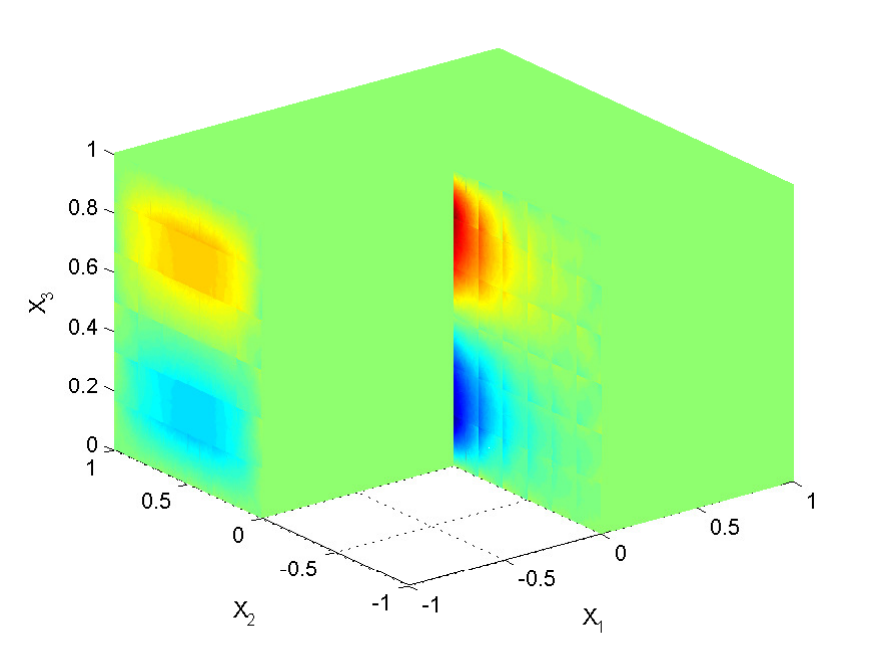}
\includegraphics[width=4.2cm, height=4.2cm]{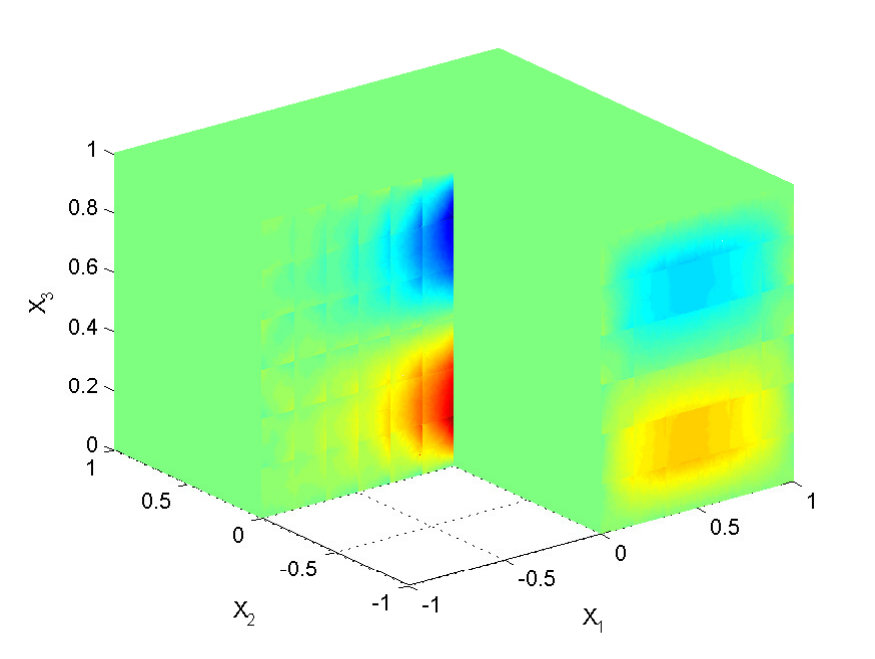}
\includegraphics[width=4.2cm, height=4.2cm]{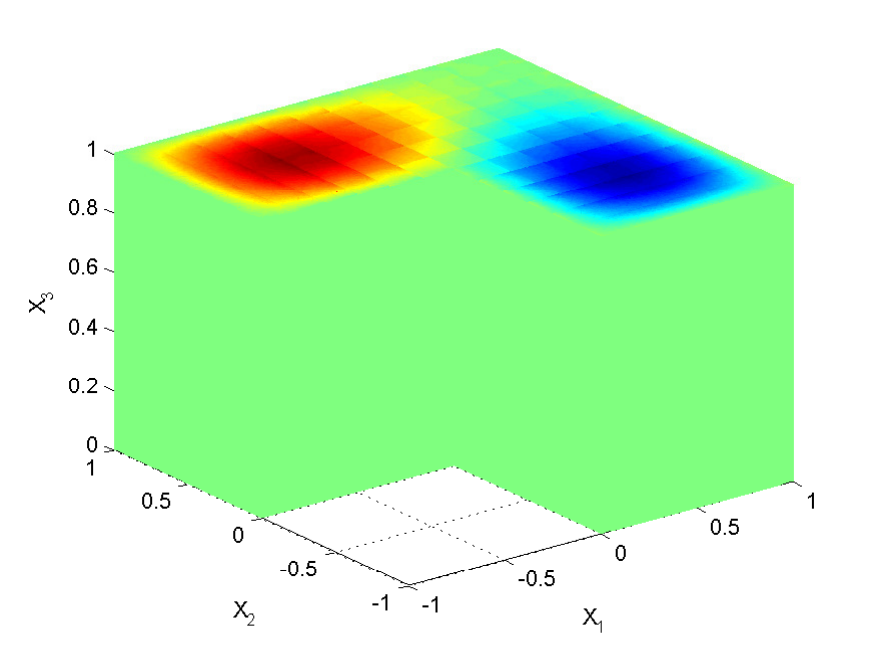}\\
\includegraphics[width=4.2cm, height=4.2cm]{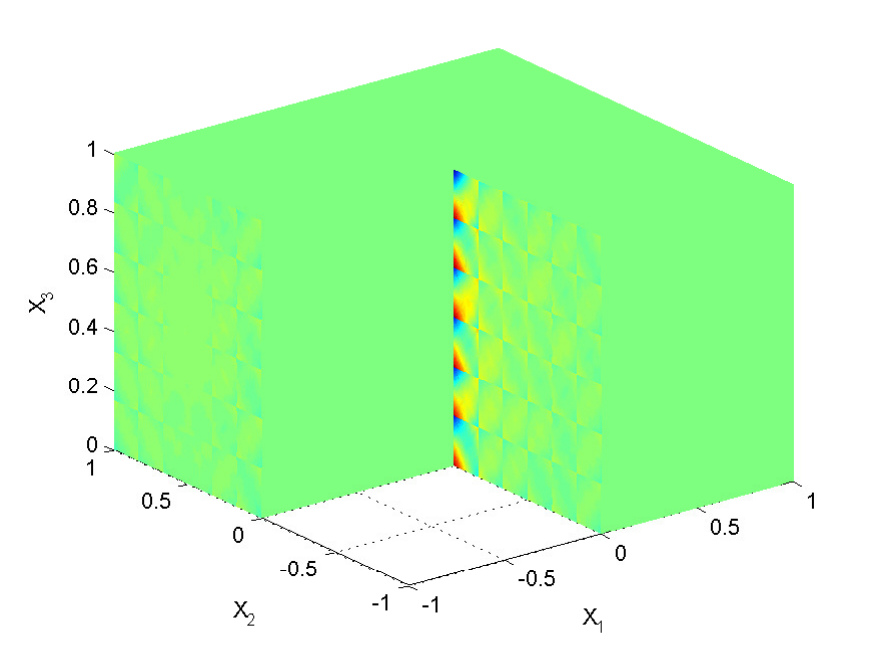}
\includegraphics[width=4.2cm, height=4.2cm]{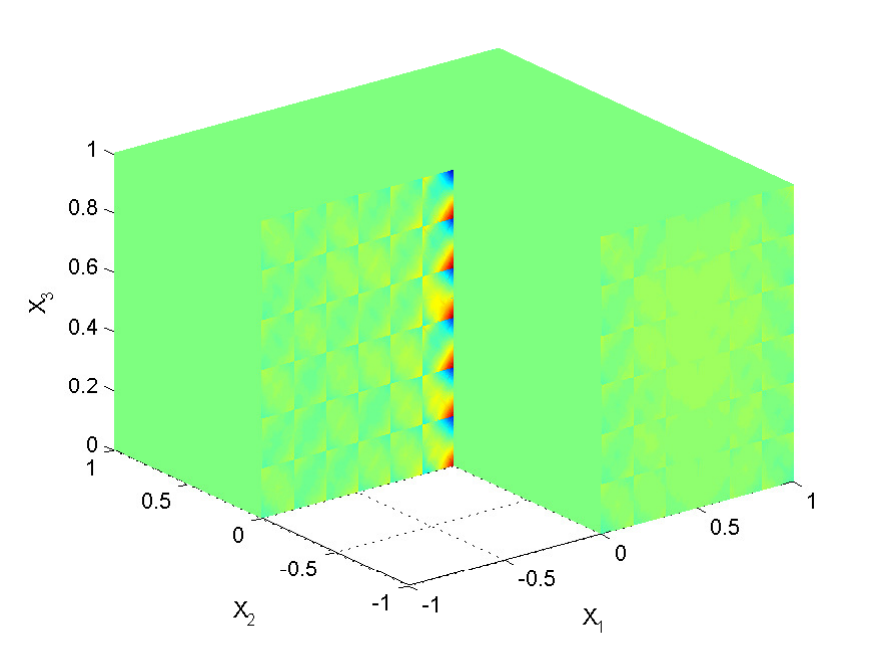}
\includegraphics[width=4.2cm, height=4.2cm]{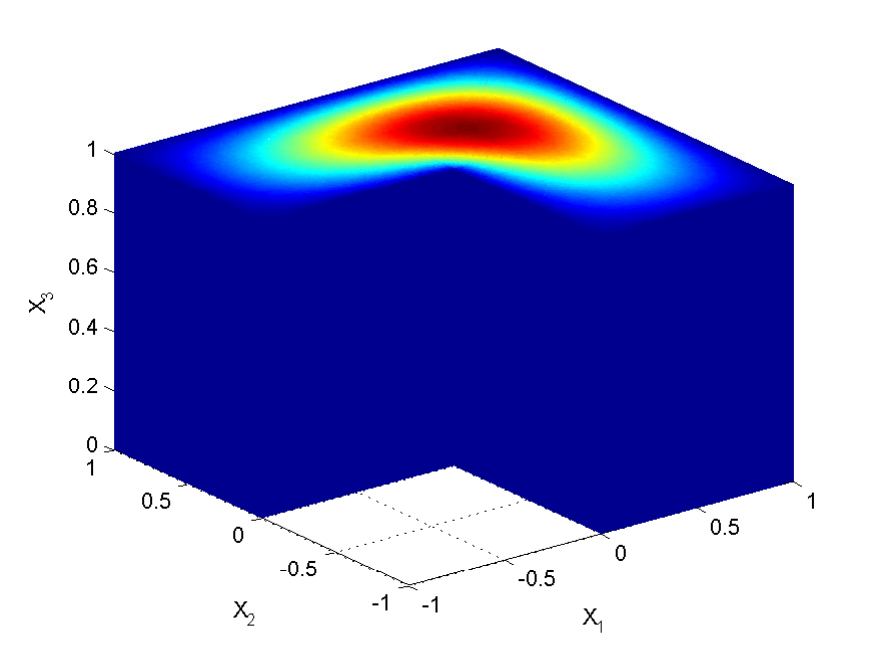}\\
\includegraphics[width=4.2cm, height=4.2cm]{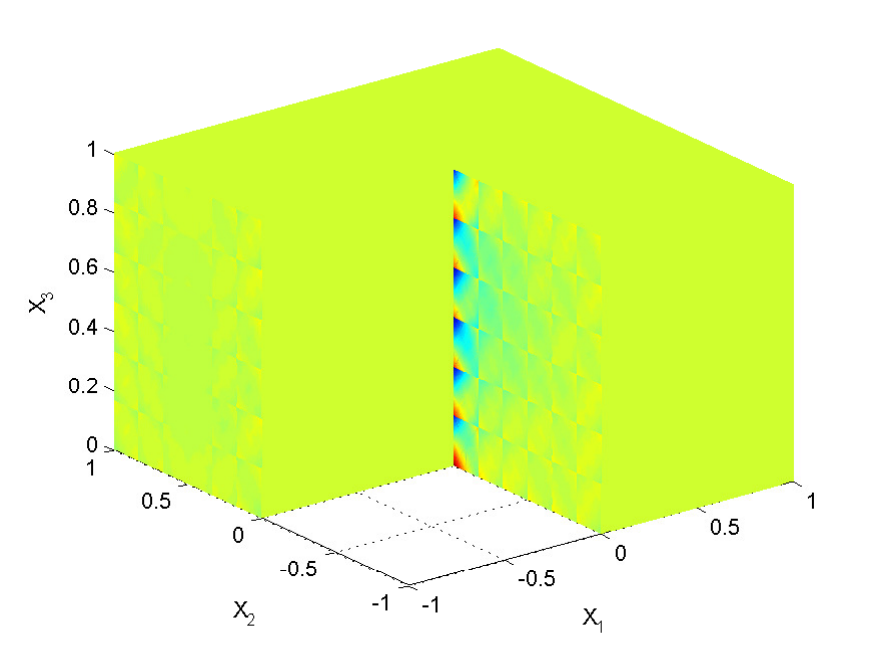}
\includegraphics[width=4.2cm, height=4.2cm]{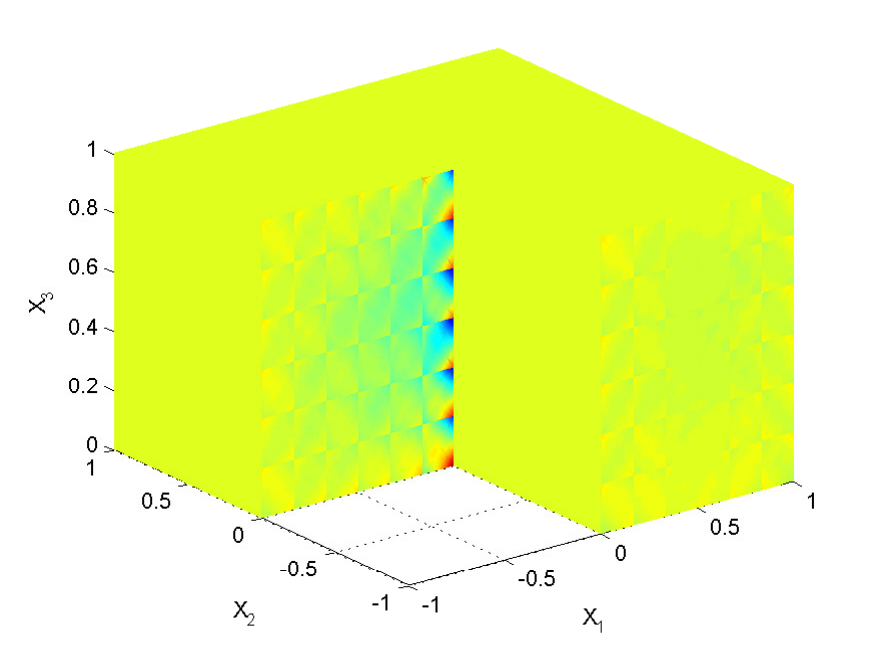}
\includegraphics[width=4.2cm, height=4.2cm]{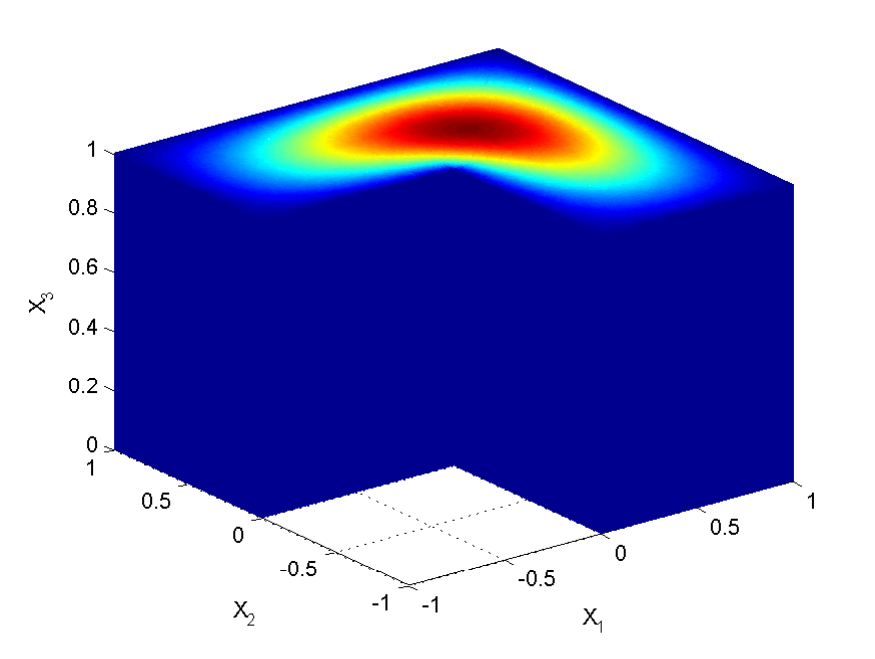}
\end{center}
\end{figure}

\end{document}